\numberwithin{equation}{section}
\theoremstyle{plain}
\newtheorem{thm}{Theorem}[section]
\newtheorem{lem}[thm]{Lemma}
\newtheorem{prop}[thm]{Proposition}
\newtheorem{cor}[thm]{Corollary}
\newtheorem{rem}[thm]{Remark}
\theoremstyle{remark}
\theoremstyle{definition}
\def\B{\mathbb{B}}
\def\R{\mathbb{R}}
\def\N{\mathbb{N}}
\def\S{\Sigma}
\def\e{\epsilon}
\def\g{\gamma}
\def\d{\delta}
\def\D{\Delta}
\def\G{\Gamma}
\def\a{\alpha}
\newcommand{\Vaisala}{V\"ais\"al\"a}
\DeclareMathOperator{\diam}{diam}
\DeclareMathOperator{\dist}{dist}
\DeclareMathAlphabet{\mathscr}{OT1}{pzc}{m}{it}
\begin{document}

\title{Quasisymmetric Spheres over  Jordan Domains}
\date{\today}
\author{Vyron Vellis}

\author{Jang-Mei Wu}

\address{Department of Mathematics, University of Illinois,  1409 West Green Street, Urbana, IL 61820, USA}

\email{vellis1@illinois.edu}

\email{wu@math.uiuc.edu}

\address{Department of Mathematics, University of Illinois,  1409 West Green Street, Urbana, IL 61801, USA}

\thanks{Research supported in part by the NSF grant DMS-1001669.}

\subjclass[2010]{Primary 30C65; Secondary 30C62}
\keywords{quasispheres, quasisymmetric spheres, double-dome-like surfaces, level chord-arc property}

\begin{abstract}
Let $\Omega$ be a planar Jordan domain. We consider double-dome-like surfaces $\S$ defined by graphs of  functions of $\dist(\cdot,\partial \Omega)$ over $\Omega$. The goal is to find the right conditions on the geometry of 
the base $\Omega$ and the growth of the height so that $\S$ is a quasisphere, or quasisymmetric to $\mathbb{S}^2$. An internal uniform chord-arc condition on the constant distance sets to $\partial \Omega$, coupled with a mild 
growth condition on the height, gives a close-to-sharp answer. Our method also produces new examples of quasispheres in $\R^n$, for any $n\ge 3$.

\end{abstract}

\maketitle
\date{\today}

\section{Introduction}\label{sec:intro}
Images of $\mathbb S^{n}$  under quasiconformal homeomorphisms of $\mathbb R^{n+1}$ are called quasispheres.
In $\R^2$, quasicircles can be described completely in geometrical terms (\cite{Ah}, \cite{Gehring-characterization}, \cite{TuVa}).
In higher dimensions the only known characterization is due to Gehring \cite{Gehext3} and V\"ais\"al\"a \cite{Vais2}:
a topological $n$-sphere $\Sigma$ in $\R^{n+1}$ is a quasisphere if and only if the bounded component and the unbounded component of $\R^{n+1} \setminus \S$ are quasiconformally equivalent to $\mathbb{B}^{n+1}$ and 
$\R^{n+1} \setminus \overline{\mathbb B^{n+1}}$, respectively. Intriguing examples of quasispheres have been constructed drawing ideas from harmonic analysis, conformal dynamics and classical geometric topology (\cite{bishop},
\cite{DToro}, \cite{Lewis}, \cite{Meyer}, \cite{PW}). The basic question of a geometric characterization of quasispheres remains.

More generally, metric spaces that are quasisymmetrically homeomorphic to $\mathbb{S}^{n}$ are called quasisymmetric spheres. An intricate characterization of quasisymmetric $2$-spheres has been found by Bonk and Kleiner 
\cite{BonK}; as a consequence, the Ahlfors $2$-regularity together with the local linear connectivity on a metric $2$-sphere suffices. However, little is known about quasisymmetric $n$-spheres when $n\geq 3$.

In this article, we examine double-dome-like surfaces in $\mathbb R^3$ defined by graphs of functions on a Jordan domain $\Omega \subset \R^2$,
\[\Sigma(\Omega, \varphi) =\{(x,z) \colon x\in \overline{\Omega},\, z=\pm \,\varphi(\dist(x,\partial \Omega)) \},\]
where $\varphi(\cdot)$ is a continuous increasing function on $[0,\infty)$ with $\varphi(0)=0$.
Our aim is to find the right conditions on the geometry of the base $\Omega$ and the growth of the gauge $\varphi$ in order for these surfaces to be quasispheres, or quasisymmetric spheres.

Suppose that $\Omega$ is a planar Jordan domain and $\mathbb B^3$ is the open unit ball.
Gehring \cite{Geh2} showed that the slit domain  $\R^3\setminus \overline{\Omega}$ is quasiconformally homeomorphic to $\R^3\setminus \overline{\mathbb B^3}$
if and only if $\Omega$ is a quasidisk;  {\Vaisala} proved \cite{Vais4} that
the infinite cylinder $\Omega \times \R$ is quasiconformally equivalent to $\mathbb B^3$ if and only if $\Omega$ satisfies an internal chord arc condition.
A slit domain may be regarded as the complement of $\Sigma(\Omega, \varphi)$ when $\varphi \equiv 0$ in the previous setting, and a cylindrical domain may be regarded as the domain enclosed by $\Sigma(\Omega, \varphi)$ by 
choosing  $\varphi \equiv \infty$. In this spirit and for  $\varphi(t)=t$, we have the following.

\begin{thm}\label{thm:quasisphere_t}
Suppose that $\Omega$ is a Jordan domain in $\R^2$. Then the surface
\[\S(\Omega,t)= \{(x,z) \colon    x\in \overline{\Omega},\, z=\pm \,\text{dist}(x, \partial \Omega)\}\]
is a quasisphere if and only if $\partial \Omega$ is a quasicircle.
\end{thm}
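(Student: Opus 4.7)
The plan is to reduce both directions to the Gehring--V\"ais\"al\"a characterization of quasispheres cited in the introduction: $\S$ is a quasisphere precisely when the bounded component $V$ and the unbounded component $U$ of $\R^3\setminus\S$ are quasiconformally equivalent to $\B^3$ and $\R^3\setminus\overline{\B^3}$ respectively. The key geometric device is an explicit ``dome-flattening'' homeomorphism $\Phi:U\to\R^3\setminus\overline{\Omega}$ (with $\Omega$ identified with $\Omega\times\{0\}\subset\R^3$) given by
\[
\Phi(x,z) = \begin{cases} \bigl(x,\,z-\operatorname{sgn}(z)\dist(x,\partial\Omega)\bigr), & x\in\overline{\Omega},\\ (x,z), & x\notin\overline{\Omega}. \end{cases}
\]
On each of the three smooth pieces $\{x\in\Omega,\ z>\dist(x,\partial\Omega)\}$, $\{x\in\Omega,\ z<-\dist(x,\partial\Omega)\}$, and $\{x\notin\overline{\Omega}\}$, the Jacobian of $\Phi$ has determinant $1$ and operator norm controlled by $|\nabla\dist(\cdot,\partial\Omega)|\le 1$; together with continuity across the shared planar interfaces, this shows that $\Phi$ is quasiconformal.

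Necessity is then immediate: if $\S$ is a quasisphere, then $U$ is QC to $\R^3\setminus\overline{\B^3}$, and composition with $\Phi^{-1}$ yields a QC equivalence $\R^3\setminus\overline{\Omega}\to\R^3\setminus\overline{\B^3}$, whence Gehring's slit-complement theorem forces $\partial\Omega$ to be a quasicircle.

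For sufficiency, running the same argument in reverse via Gehring and $\Phi$ shows that $U$ is QC to $\R^3\setminus\overline{\B^3}$, so it remains to build a QC map from $V$ onto $\B^3$. Because a quasidisk is internally chord-arc, V\"ais\"al\"a's cylinder theorem provides a QC map $\Omega\times\R\to\B^3$, reducing the problem to constructing a QC map $V\to\Omega\times\R$. The naive parametrization $(x,s)\mapsto(x,s\dist(x,\partial\Omega))$ degenerates at $\partial\Omega$ because along that curve $V$ meets itself in a dihedral wedge of opening $\pi/2$; this is opened up to a half-space by a $3$-dimensional angle-doubling map $(r,\theta,v)\mapsto(r,2\theta,v)$ in cylindrical coordinates about a fiber of $\partial\Omega$ (which is $2$-quasiconformal), glued to the identity in the deep interior of $V$. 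The main obstacle is precisely this last step: the angle-opening requires serviceable tubular coordinates along $\partial\Omega$ inside $V$, but a quasicircle need not admit classical tubular neighborhoods and its constant-distance level sets $\{\dist(\cdot,\partial\Omega)=r\}$ can be highly irregular. Producing such coordinates in the rough setting is exactly the role of the internal uniform chord-arc condition on constant-distance sets announced in the abstract, and it is the technical heart of the sufficiency direction. With the QC map $V\to\Omega\times\R$ in hand, the Gehring--V\"ais\"al\"a characterization closes the argument.
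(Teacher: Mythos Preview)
Your treatment of the unbounded component via the flattening map $\Phi$ is exactly the paper's Lemma~\ref{lem:slit}, and your necessity argument (compose $\Phi$ with the QC map coming from the quasisphere hypothesis, then invoke Gehring's slit theorem in the reverse direction) is a clean alternative to the paper's route, which instead deduces the $2$-point condition on $\partial\Omega$ directly from the LLC property of $\S(\Omega,t)$. Both are valid; yours is slicker, theirs is more self-contained.

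The genuine gap is in the bounded component. You reduce to producing a QC map $V\to\Omega\times\R$, attempt an angle-doubling in tubular coordinates along $\partial\Omega$, acknowledge that a quasicircle does not admit such coordinates, and then assert that ``producing such coordinates in the rough setting is exactly the role of the internal uniform chord-arc condition on constant-distance sets announced in the abstract.'' This is a misreading of the paper's architecture: the level chord-arc property is the hypothesis of Theorem~\ref{thm:LCA-QS-main} (general gauges $\varphi\in\mathcal F$), not of Theorem~\ref{thm:quasisphere_t}. For $\varphi(t)=t$ the theorem is stated and proved for an \emph{arbitrary} quasidisk, with no LCA assumption---indeed Lipschitz domains already need not have LCA, so invoking it here would strictly weaken the conclusion. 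Your proposed construction of $V\to\Omega\times\R$ is therefore neither completed nor completable along the line you indicate.

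The paper bypasses cylinders, level sets, and tubular coordinates entirely. It fixes a Riemann map $f\colon\B^2\to\Omega$ and defines
\[
F(x,z)=\Bigl(f(x),\ \frac{\dist(f(x),\partial\Omega)}{1-|x|}\,z\Bigr)
\]
from the standard double cone $\mathcal C=\{(x,z)\in\B^2\times\R:|z|<1-|x|\}$ onto $V$. The Koebe distortion estimates \eqref{eq:onequarter}--\eqref{eq:onequarter-A} show that on each Whitney-type ball $B^2(x_0,(1-|x_0|)/2)$ the scaling factor $\dist(f(x),\partial\Omega)/(1-|x|)$ is comparable to $|f'(x_0)|$ and has controlled oscillation; a direct bilipschitz computation (on the scale $|f'(x_0)|$) then gives that $F$ is quasiconformal with an absolute constant. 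This works for \emph{every} simply connected $\Omega$, with no regularity on $\partial\Omega$ beyond being a Jordan curve, and is the step you are missing.
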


The method of constructing quasispheres in Theorem \ref{thm:quasisphere_t} extends inductively to all dimensions; see Theorem \ref{thm:quasispheres_n}.

For $\alpha>1$, surfaces $\S(\Omega,t^\alpha)= \{(x,z) \colon    x\in \overline{\Omega},\, z=\pm \,(\text{dist}(x, \partial \Omega))^\alpha\}$ are not linearly locally connected, therefore are not quasisymmetric spheres.

V\"ais\"al\"a \cite{Vais3} has shown that the product $\gamma \times I$ of a Jordan arc $\gamma$ and  an interval $I$ is quasisymmetric embeddable in $\R^2$ if and only if $\G$ satisfies the chord-arc condition.

When $0<\alpha<1$, the part of the surface $\S(\Omega,t^{\alpha})$ near $\R^2 \times \{0\}$ resembles the product of an arc with an interval locally. On the other hand, the double-dome-like surface envelops  the interior 
$\Omega$ above and below. Therefore some form of internal uniform chord-arc condition is expected in order for $\S(\Omega,t^{\alpha})$ to be quasisymmetric to $\mathbb S^2$.

A Jordan domain $\Omega $ is said to have the \emph{level chord-arc property} if the $\epsilon$-distance sets to $\partial \Omega$,
\[\gamma_{\epsilon}=\{x\in \Omega\colon \text{dist} (x,\partial \Omega)=\epsilon \},\]
are  uniform $c$-chord-arc curves for some $c> 1$ and \emph{all} sufficiently small $\epsilon>0$. We define a class of gauges
\begin{align*}
\mathcal{F}= \{&\varphi\colon[0,\infty)\to [0,\infty)\colon \text{ a homeomorphism with} \,\, \varphi(0)=0,\\
&\liminf_{t\to 0}\varphi(t)/t > 0 \text{ and }\, \varphi \, \text{is Lipschitz on }\, [r ,+\infty) \text{ for all }r>0\}.
\end{align*}
This class includes  $t^\alpha$ with $0<\alpha <1$. We now state the main theorem.

\begin{thm}\label{thm:LCA-QS-main}
Let $\Omega$ be a planar Jordan domain. Then the surface
$\S(\Omega,\varphi)$ is quasisymmetric to $\mathbb{S}^2$ for every $\varphi$ in $\mathcal{F}$ if and only if
$\Omega$ has the level chord-arc property.
\end{thm}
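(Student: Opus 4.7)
The plan is to prove both directions, with sufficiency via the Bonk--Kleiner characterization of quasisymmetric $2$-spheres and necessity via an explicit local obstruction to linear local connectivity (LLC).

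For sufficiency, suppose $\Omega$ has the level chord-arc property and let $\varphi\in\mathcal{F}$. I would aim to show that $\S(\Omega,\varphi)$, with the restricted Euclidean metric, is Ahlfors $2$-regular and LLC, so that Bonk--Kleiner produces the required quasisymmetry to $\mathbb{S}^2$. The computational device is the natural foliation of each cap of $\S(\Omega,\varphi)$ by the translated curves $\gamma_\epsilon \times \{\pm\varphi(\epsilon)\}$. Ahlfors regularity would follow by integrating the uniform chord-arc length bound on $\gamma_\epsilon$ against the Lipschitz tail of $\varphi$; the hypothesis $\liminf_{t\to 0}\varphi(t)/t>0$ ensures that the two caps meet at a uniformly positive angle along the ridge $\partial\Omega \times \{0\}$, so the area estimates extend across it. For LLC-1, given $p,q \in B(x,r)\cap \S$, I would construct a connecting path by combining an arc inside a single leaf $\gamma_\epsilon \times \{\pm\varphi(\epsilon)\}$ with a vertical segment across the foliation: the chord-arc constant bounds the horizontal length, and near the ridge the $\liminf$ hypothesis guarantees that upper-cap and lower-cap points can be joined without long detours. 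LLC-2 is treated analogously by routing paths around a small ball through the $\gamma_\epsilon$'s.

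For necessity, argue by contraposition. If $\Omega$ fails the level chord-arc property, then for every $c>1$ there exist arbitrarily small $\epsilon$ and points $a,b\in \gamma_\epsilon$ whose shorter $\gamma_\epsilon$-subarc has length at least $c|a-b|$. Taking $\varphi(t)=t \in \mathcal{F}$, the points $(a,\epsilon)$ and $(b,\epsilon)$ on $\S(\Omega,t)$ are close in $\R^3$, but any arc in $\S(\Omega,t)$ joining them either follows the long $\gamma_\epsilon$-subarc horizontally or descends substantially in height; the slope-$\pm 1$ nature of the graph makes the second option comparable in cost to the first. Letting $c\to\infty$ yields a sequence of LLC-1 failures with unbounded ratio, precluding any quasisymmetry to $\mathbb{S}^2$.

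The main obstacle in both directions is coordinating the horizontal level-curve scale with the vertical $\varphi$-scale in the LLC estimates near the ridge $\partial\Omega \times \{0\}$. The two conditions defining $\mathcal{F}$---positive slope at $0$ and Lipschitz decay away from $0$---are exactly what is needed to make this coordination uniform, with constants depending only on $\varphi$ and the chord-arc constant of the foliation. A subtle point to verify carefully is that the scale-invariance of the LLC estimates does not deteriorate as $\epsilon \to 0$; this is where the uniformity of the chord-arc constant across all small $\epsilon$ is crucial, and where any weakening of the level chord-arc hypothesis would cause the proof to collapse.
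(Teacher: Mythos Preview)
Your sufficiency argument is essentially the paper's: invoke Bonk--Kleiner after verifying LLC and Ahlfors $2$-regularity from the level chord-arc hypothesis. The outline is correct, though the actual execution (Lemma~\ref{lem:LLC1}, Proposition~\ref{prop:2-reg}) is more delicate than you suggest.

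Your necessity argument, however, has a genuine gap. You propose to take $\varphi(t)=t$ and show that failure of LCA forces an LLC failure on $\Sigma(\Omega,t)$. This cannot work: Theorem~\ref{thm:quasisphere_t} says that $\Sigma(\Omega,t)$ is a quasisphere whenever $\partial\Omega$ is a quasicircle, and there are quasidisks that fail the level chord-arc property (for instance, any quasidisk whose boundary is not rectifiable already has $\gamma_0=\partial\Omega$ not chord-arc). For such $\Omega$, $\Sigma(\Omega,t)$ \emph{is} LLC and quasisymmetric to $\mathbb{S}^2$, so your claimed obstruction is false. The flaw in your heuristic is the assertion that a path on $\Sigma(\Omega,t)$ joining $(a,\epsilon)$ to $(b,\epsilon)$ must either follow $\gamma_\epsilon$ or ``descend substantially'': with slope $1$, moving between nearby level sets is cheap, and the Lipschitz graph over $\Omega$ allows short connecting arcs even when individual level curves are badly behaved.

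The paper's necessity argument is structurally different and uses the quantifier ``for every $\varphi\in\mathcal{F}$'' in an essential way. First, gauges with $\varphi'(t)\to\infty$ as $t\to 0$ are used to force the LQC property on $\Omega$ (Lemmas~\ref{lem:LJC} and~\ref{lem:lqc1}): steep $\varphi$ makes the surface nearly vertical near $\partial\Omega$, so that LLC of $\Sigma(\Omega,\varphi)$ genuinely constrains the level curves. Second, assuming LQC holds but $\partial\Omega$ is not chord-arc, a \emph{specific} gauge $\varphi$ tailored to the arc-length blow-up of $\partial\Omega$ is constructed (Proposition~\ref{prop:chord-arcnec}), and a V\"ais\"al\"a-type area/modulus argument shows $\Sigma(\Omega,\varphi)$ cannot be quasisymmetric to $\mathbb{S}^2$. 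Finally, LQC together with $\partial\Omega$ chord-arc is equivalent to LCA (Proposition~\ref{prop:lqc+chord-arc}). No single fixed gauge witnesses the failure; the choice of $\varphi$ depends on $\Omega$.
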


In general, a Lipschitz domain may contain a sequence $\g_{\e_n}$ of constant distance sets with $\e_n \to 0$,  each of which is a Jordan curve containing a cusp, hence not a quasicircle; see Remark 5.2 in \cite{VW}.

What are the intrinsic characteristics of a Jordan domain that has the level chord-arc property? A flatness module $\zeta_{\gamma}$ measuring the deviation of subarcs of a Jordan curve $\gamma$ from their chords in a uniform 
and scale-invariant way has been defined in \cite{VW} for this purpose.

From here onward, given $x,y$ on a Jordan curve $\gamma$, we take $\gamma(x,y)$ to be the subarc of $\gamma$ connecting $x$ and $ y$ of smaller diameter, or  either subarc when both have the same diameter; given $x,y$ in a 
Euclidean space, we denote by $l_{x,y}$ the infinite line containing  $x$ and $y$. Set \[ \zeta_{\gamma}(x,y) = \frac{1}{|x-y|}\sup_{z\in\gamma(x,y)} \dist(z,l_{x,y})\] and define the \emph{flatness of $\gamma$} to be
\[ \zeta_{\gamma} = \lim_{r \to 0}\sup_{x,y\in\gamma , |x-y|\leq r} \zeta_{\gamma}(x,y). \]

The connection between the flatness of $\gamma$ and the level chord-arc property has been established  in \cite{VW}.

\begin{thm}[{\cite[Theorem 1.2, Theorem 1.3]{VW}}] \label{thm:VW-1/2}
Let $\Omega$ be a planar Jordan domain and suppose that $\partial \Omega$ is a chord-arc curve with flatness $\zeta_{\partial \Omega}  <1/2$. Then $\Omega$ has the level chord-arc property. On the other hand, there exists a 
Jordan domain $\Omega$ whose boundary $\partial \Omega$ is a chord-arc with flatness  $\zeta_{\partial\Omega} = 1/2$  which does not satisfy the level chord-arc property.
\end{thm}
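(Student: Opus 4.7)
The plan is to treat the two halves of the statement independently. For the sufficiency direction, the strategy is to show that the nearest-point projection $\pi\colon \g_\e \to \partial \Omega$ is a uniformly bi-Lipschitz homeomorphism for all sufficiently small $\e$, with constants depending only on $\z_{\partial \Omega}$ and the chord-arc constant of $\partial \Omega$. Since $\partial \Omega$ is chord-arc by assumption, this bi-Lipschitz equivalence immediately transfers the chord-arc property to every $\g_\e$ with uniform constants. The first step is to show that $\pi$ is well-defined (single-valued) and that $\g_\e$ is a Jordan curve for small $\e$. The flatness condition $\z_{\partial \Omega} < 1/2$ furnishes $\d > 0$ and $r_0 > 0$ so that every subarc of $\partial \Omega$ of diameter at most $r_0$ lies in a strip of width $(1 - 2\d)\cdot(\text{diameter})$ about some line. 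This precludes double nearest points for interior points sufficiently close to $\partial \Omega$ and forces $\g_\e$ to be a Jordan curve.

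The heart of the argument is the bi-Lipschitz estimate for $\pi$. Given $x, y \in \g_\e$ with $x^* = \pi(x)$ and $y^* = \pi(y)$ at a scale $|x-y| \le r_0$, I would argue that the segments $[x, x^*]$ and $[y, y^*]$ meet the chord $[x^*, y^*]$ at angles bounded away from $\pi/2$. If one such angle were too close to $\pi/2$, the minimality of $|x - x^*|$ (together with the fact that subarcs of $\partial \Omega$ near $x^*$ and $y^*$ lie in a thin strip) would force the flatness on the subarc containing both $x^*$ and $y^*$ to exceed $1/2$, contradicting the hypothesis. This angular control yields $C^{-1}|x^* - y^*| \le |x - y| \le C|x^* - y^*|$ for constants depending only on $\z_{\partial \Omega}$ and $\e/|x - y|$ is small. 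Combining this with the chord-arc property of $\partial \Omega$ transports a uniform chord-arc constant to each $\g_\e$.

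For the sharp example, I would construct a chord-arc Jordan domain $\Omega$ whose boundary contains, at a sequence of scales $r_n \to 0$, narrow hairpin-like deformations consisting of outward-pointing folds whose depths are calibrated so that $\z_{\partial \Omega} = 1/2$ is attained in the limit, while the overall chord-arc constant of $\partial \Omega$ remains bounded. For parameters $\e_n$ matched to the depth of the $n$-th hairpin, the level set $\g_{\e_n}$ is forced to wrap around that fold and develops a pinch point or cusp, destroying any uniform chord-arc constant. The main obstacle in the sufficiency direction is this angular estimate: converting the $L^\infty$-style flatness bound into a pointwise bound on the direction of the projection segment requires a careful combinatorial-geometric argument, and it is precisely the \emph{strict} inequality $\z_{\partial \Omega} < 1/2$ that keeps the projection directions uniformly transverse to local tangent directions of $\partial \Omega$; the example shows that any weakening to $\le 1/2$ is insufficient.
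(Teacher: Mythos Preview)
The paper does not contain a proof of this theorem. Theorem \ref{thm:VW-1/2} is stated as a citation of \cite[Theorem 1.2, Theorem 1.3]{VW}; the present article only quotes the result and uses it (together with Lemma \ref{lem:lqc1} and Proposition \ref{prop:lqc+chord-arc}) to derive Corollary \ref{cor:flatness-QSspheres-1/2}. So there is no in-paper argument against which to compare your proposal.

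That said, a few remarks on your sketch. Your strategy of transporting the chord-arc constant via the nearest-point retraction $\pi\colon\g_\e\to\G$ is reasonable in spirit, but the angular claim as written is inverted: when $\G$ is nearly flat, the segment $[x,x^*]$ is close to \emph{perpendicular} to the chord $[x^*,y^*]$, not bounded away from $\pi/2$. What you actually need for the bi-Lipschitz estimate is that these segments are not nearly \emph{parallel} to the chord (angles bounded away from $0$ and $\pi$), and, more seriously, that $\pi$ is injective --- two distinct points of $\g_\e$ could a priori share a nearest boundary point, and you have not ruled this out. Both issues can be handled under $\zeta_\G<1/2$, but they require more than you have indicated. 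The route taken in \cite{VW} (as reflected in Propositions \ref{prop:lqc+chord-arc} and the unnumbered proposition preceding it) is more indirect: one first proves the level Jordan curve and level quasicircle properties from the flatness hypothesis, and then shows separately that LQC together with chord-arc on $\G$ forces LCA. Your direct bi-Lipschitz approach, if carried through, would be a genuinely different and arguably cleaner argument, but the sketch as it stands has real gaps at the injectivity step and the angle estimate.

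For the example with $\zeta_\G=1/2$, your description is consistent with what \cite{VW} constructs (cf.\ also \cite[Remark 5.2]{VW}, invoked later in the paper): a chord-arc boundary carrying folds at decreasing scales so that some $\g_{\e_n}$ develop cusps and fail even to be uniform quasicircles.
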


Combining Theorem \ref{thm:LCA-QS-main} and Theorem \ref{thm:VW-1/2} with Lemma \ref{lem:lqc1} below, we may obtain the following.

\begin{cor}\label{cor:flatness-QSspheres-1/2}
Suppose that $\Omega$ is a planar Jordan domain whose boundary is a chord-arc curve  with flatness $\zeta_{\partial \Omega} <1/2$. Then the surface $\Sigma(\Omega, t^\alpha)$ is quasisymmetric to $\mathbb S^2$ for all 
$\alpha \in (0,1]$. On the other hand, there exist Jordan domains $\Omega$ in $\R^2$ whose boundaries are chord-arc curves with flatness $\zeta_{\partial \Omega} =1/2$, to which the associated surfaces $\Sigma(\Omega, t^\alpha)$ 
are not quasisymmetric to $\mathbb S^2$ for any $\alpha \in (0,1)$.
\end{cor}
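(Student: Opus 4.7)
The plan is to derive the corollary as a direct composition of Theorems \ref{thm:LCA-QS-main} and \ref{thm:VW-1/2} together with Lemma \ref{lem:lqc1}, with essentially no new geometric argument beyond verifying that $\varphi(t)=t^{\alpha}$ lies in the class $\mathcal{F}$ for $\alpha\in(0,1]$. For the latter I would note that $t^\alpha$ is a homeomorphism of $[0,\infty)$ fixing $0$, that the ratio $\varphi(t)/t=t^{\alpha-1}$ either tends to $+\infty$ as $t\to 0^+$ (when $\alpha<1$) or equals $1$ (when $\alpha=1$), so $\liminf_{t\to 0}\varphi(t)/t>0$ in either case, and that the derivative $\alpha t^{\alpha-1}$ is bounded on $[r,\infty)$ for every $r>0$, giving the required Lipschitz tail.

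For the forward implication I would argue as follows. Assuming $\partial\Omega$ is chord-arc with $\zeta_{\partial\Omega}<1/2$, the first assertion of Theorem \ref{thm:VW-1/2} provides the level chord-arc property for $\Omega$. Theorem \ref{thm:LCA-QS-main}, applied to the gauge $\varphi(t)=t^{\alpha}\in\mathcal{F}$, then yields that $\Sigma(\Omega,t^{\alpha})$ is quasisymmetric to $\mathbb{S}^2$ for every $\alpha\in(0,1]$.

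For the reverse implication I would start from the second half of Theorem \ref{thm:VW-1/2}, which supplies a Jordan domain $\Omega_0$ whose boundary is a chord-arc curve with $\zeta_{\partial\Omega_0}=1/2$ but which fails the level chord-arc property. The contrapositive of Theorem \ref{thm:LCA-QS-main} then produces \emph{some} $\varphi_0\in\mathcal{F}$ for which $\Sigma(\Omega_0,\varphi_0)$ is not quasisymmetric to $\mathbb{S}^2$. To finish I would invoke Lemma \ref{lem:lqc1}, which I anticipate to assert, in some form, that for any two $\varphi_1,\varphi_2\in\mathcal{F}$ the surfaces $\Sigma(\Omega,\varphi_1)$ and $\Sigma(\Omega,\varphi_2)$ are quasisymmetrically equivalent, so that the property ``quasisymmetric to $\mathbb{S}^2$'' is independent of the chosen gauge within $\mathcal{F}$. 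This transfers the failure from $\varphi_0$ to $t^{\alpha}$ for every $\alpha\in(0,1)$ and completes the negative statement.

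The only non-routine ingredient in this scheme is the content of Lemma \ref{lem:lqc1}: once the gauge-invariance of the quasisymmetric class is in hand, the corollary is pure bookkeeping. The delicate region is a neighborhood of the equator $\partial\Omega\times\{0\}$, where the vertical scale of $\Sigma(\Omega,\varphi)$ is governed by $\varphi$, and where the condition $\liminf_{t\to 0}\varphi(t)/t>0$ built into the definition of $\mathcal{F}$ is precisely what prevents two members of $\mathcal{F}$ from becoming incommensurable as one approaches $\partial\Omega$. Upgrading this pointwise comparability into a genuine quasisymmetric identification of the full double-dome-like surfaces is the step where all the real work will be concentrated.
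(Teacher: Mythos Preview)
Your forward direction is correct and matches the paper exactly.

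The reverse direction contains a genuine gap: your guess about the content of Lemma~\ref{lem:lqc1} is wrong. That lemma is \emph{not} a gauge-invariance statement. It says that if $\partial\Omega$ is a quasicircle and $\Sigma(\Omega,\varphi)$ is $\lambda$-$\mathrm{LLC}_1$ for some $\varphi\in\mathcal{F}$ with $\varphi'(t)\to\infty$ as $t\to 0$, then $\Omega$ has the LQC property. The paper uses this in contrapositive form: the domain $\Omega_0$ produced in \cite[Remark~5.2]{VW} has a chord-arc boundary with $\zeta_{\partial\Omega_0}=1/2$ but \emph{fails the LQC property} (not merely LCA). Since $\varphi(t)=t^{\alpha}$ has $\varphi'(t)=\alpha t^{\alpha-1}\to\infty$ for every $\alpha\in(0,1)$, Lemma~\ref{lem:lqc1} forces $\Sigma(\Omega_0,t^{\alpha})$ to fail $\mathrm{LLC}_1$, hence it cannot be quasisymmetric to $\mathbb{S}^2$.

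Your route through the contrapositive of Theorem~\ref{thm:LCA-QS-main} only yields \emph{some} $\varphi_0\in\mathcal{F}$ for which $\Sigma(\Omega_0,\varphi_0)$ fails, and there is no mechanism in the paper to transfer this failure to the gauges $t^{\alpha}$; indeed, the $\varphi_0$ produced in Proposition~\ref{prop:chord-arcnec} is custom-built from the geometry of $\partial\Omega_0$ and need not resemble any power function. If you want to start from the statement of Theorem~\ref{thm:VW-1/2} as written (failure of LCA rather than LQC), you can recover the needed failure of LQC via Proposition~\ref{prop:lqc+chord-arc}: since $\partial\Omega_0$ is chord-arc, LCA and LQC are equivalent for $\Omega_0$. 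Then apply Lemma~\ref{lem:lqc1} directly as above.
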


\medskip

The growth condition near $0$ imposed on the gauges in $\mathcal F$ is essential. In fact, if $\liminf_{t\to 0} \varphi(t)/t =0$ then the double-dome-like surface $\S(\Omega,\varphi)$ is not linearly locally connected therefore 
not a quasisymmetric sphere, for any Jordan domain $\Omega$. The Lipschitz condition away from $0$, on the other hand, is added to tidy up the statements. For example, the surface $\S(B^2(0,1),\varphi)$ associated to the gauge
\begin{equation*}
\varphi(t) =
\begin{cases}
1 - \sqrt{1-t} &\quad t\in[0,1],\\
t              &\quad t\in[1,+\infty),
\end{cases}
\end{equation*}
is not  quasisymmetric to $\mathbb S^2$, however  $\S(B^2(0,2),\varphi)$ is.

\medskip

In Section \ref{sec:levelsets}, we discuss properties of constant distance sets to Jordan curves. Starting from a $2$-dimensional double-dome-like surface constructed over a planar quasidisk, we build quasispheres in all 
dimensions in Section \ref{sec:quasispheres}. The proof of Theorem \ref{thm:quasisphere_t} is given in Section \ref{sec:proof_quasisphere_t}. In Section \ref{sec:LLC+LQC}, we establish a relation between the linear local 
connectedness of the surface $\S(\Omega,\varphi)$ and the level quasicircle property of the domain $\Omega$. The proofs of Theorem \ref{thm:LCA-QS-main} and Corollary \ref{cor:flatness-QSspheres-1/2} are completed in Section 
\ref{sec:heightdistance}.

\section{Preliminaries}\label{sec:prelim}
A homeomorphism $f\colon D\to D'$ between two domains in $ \mathbb{R}^n$  is called $K$-\emph{quasiconformal}  if it is orientation preserving, belongs to $ W_{\text{loc}}^{1,n}(D)$, and satisfies the distortion inequality
\[ |Df(x)|^n \le K J_f(x) \quad \text{a. e.} \,\,\, x \in D, \]
where $Df$ is the formal differential matrix and $J_f$ is the Jacobian.

A Jordan curve $\g$ in $\mathbb{R}^2$ is called a $K$-\emph{quasicircle} if it is the image of the unit circle $\mathbb S^1$ under a $K$-quasiconformal homeomorphism of $\mathbb{R}^2$. A geometric characterization due to Ahlfors 
\cite{Ah} states that a Jordan curve $\g$ is a $K$-quasicircle if and only if it satisfies the \emph{2-point condition}:
\begin{equation}\label{eq:3pts}
\text{there exists } C>1 \text{ such that for all }  x,y \in \g, \, \,\diam{\g(x,y)} \leq C|x-y|,
\end{equation}
where the distortion $K$ and the $2$-point constant $C$ are quantitatively related.

Images of $\mathbb S^{n}$  under quasiconformal homeomorphisms of $\mathbb R^{n+1}, n\ge 2,$ are called \emph{quasispheres}. The only known characterization of quasispheres is  due to Gehring  for $n=3$ in 1965, and to 
V\"ais\"al\"a   for all $n\ge 3$ in 1984.

\begin{thm}[{\cite[Theorem 5.9]{Vais2}, \cite[Theorem]{Gehext3}}]\label{thm:Gehring-Vaisala-characterization}
Let $n \geq 3$ and  $\mathcal S$ be a topological $(n-1)$-sphere in ${\mathbb{R}^n}$. Suppose that  components of ${\mathbb{R}^n} \setminus \mathcal S$ are $K$-quasiconformal to $\mathbb{B}^n$ and 
$\R^n \setminus \overline{\mathbb B^n}$ respectively. Then there is a $K'$-quasiconformal homeomorphism of ${\mathbb{R}^n}$ that maps $\mathcal S$ onto $\mathbb{S}^{n-1}$, where constant $K'>1$ depends only on $K$ and $n$.
\end{thm}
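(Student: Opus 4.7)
The plan is to build a global quasiconformal map on $\mathbb{R}^n$ by gluing the two given component maps across $\mathcal{S}$. Write $\Omega_1, \Omega_2$ for the bounded and unbounded components of $\mathbb{R}^n \setminus \mathcal{S}$ and let $f_1 \colon \Omega_1 \to \mathbb{B}^n$ and $f_2 \colon \Omega_2 \to \mathbb{R}^n \setminus \overline{\mathbb{B}^n}$ be the given $K$-quasiconformal homeomorphisms. The first order of business is to extend each $f_i$ continuously to $\mathcal{S}$: since the target is a round ball (or its exterior) and quasiconformal maps preserve conformal modulus up to a factor depending on $K$, a Carath\'eodory-type argument via prime ends and ring moduli produces a homeomorphism $\phi_i \colon \mathcal{S} \to \mathbb{S}^{n-1}$ extending $f_i$.

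If the boundary homeomorphisms $\phi_1$ and $\phi_2$ already coincided, I could paste $f_1$ and $f_2$ into a single homeomorphism of $\mathbb{R}^n$ that is $K$-quasiconformal off $\mathcal{S}$, and a standard sewing principle for quasiconformal maps across a topological sphere collared by QC maps from both sides would promote it to a global $K$-quasiconformal map. The real content is to control the mismatch, measured by the self-homeomorphism $h = \phi_2 \circ \phi_1^{-1}$ of $\mathbb{S}^{n-1}$.

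The main obstacle, and the heart of the Gehring--V\"ais\"al\"a argument, is to show that $h$ is quasisymmetric (equivalently quasim\"obius) with distortion depending only on $K$ and $n$. My approach is through conformal moduli: given disjoint continua $E, F \subset \mathbb{S}^{n-1}$, a ring in $\mathbb{B}^n$ separating them has a well-defined modulus; pulling back by $f_1$ yields a ring in $\Omega_1$ separating $\phi_1^{-1}(E)$ and $\phi_1^{-1}(F)$ with modulus distorted by at most $K$, and the symmetric estimate holds through $f_2$ from the outside. Chaining the two inequalities gives a two-sided bound on the modulus of rings on $\mathbb{S}^{n-1}$ separating $h(E)$ and $h(F)$ in terms of the modulus separating $E$ and $F$; via the standard dictionary between ring moduli and quasisymmetry of self-maps of $\mathbb{S}^{n-1}$ (valid precisely because $n-1 \geq 2$), this yields the desired quasisymmetric control on $h$.

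Once $h$ is known to be quasisymmetric with controlled constants, I would invoke the Tukia--V\"ais\"al\"a boundary extension theorem to produce a $K''$-quasiconformal self-map $H$ of $\mathbb{R}^n$ that preserves $\mathbb{S}^{n-1}$ setwise and restricts to $h$ there, with $K''$ depending only on $K$ and $n$. Define $F = f_1$ on $\overline{\Omega_1}$ and $F = H^{-1} \circ f_2$ on $\overline{\Omega_2}$. The two pieces agree on $\mathcal{S}$ by construction, each is $K'$-quasiconformal with $K'$ depending only on $K$ and $n$, and the sewing principle noted above produces a global $K'$-quasiconformal self-map of $\mathbb{R}^n$ sending $\mathcal{S}$ onto $\mathbb{S}^{n-1}$, as required. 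The hard part throughout is the quasisymmetry of $h$; the extension and gluing steps are technical but conceptually routine once that bound is in hand.
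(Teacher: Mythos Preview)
The paper does not give a proof of this theorem; it is quoted from \cite{Gehext3} and \cite{Vais2} in the preliminaries and used as a black box. There is thus no proof in the paper to compare against.

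Your outline is the classical strategy, and the architecture is right: extend each $f_i$ to a boundary homeomorphism $\phi_i$, control the mismatch $h=\phi_2\circ\phi_1^{-1}$ on $\mathbb{S}^{n-1}$, extend $h$ quasiconformally to $\mathbb{R}^n$, and glue. Two places deserve tightening. First, the removability step should be run on the $\mathbb{S}^{n-1}$ side: the inverse of your glued map $F$ is a self-homeomorphism of $\mathbb{R}^n$ that is quasiconformal on $\mathbb{B}^n$ and on $\mathbb{R}^n\setminus\overline{\mathbb{B}^n}$, and $\mathbb{S}^{n-1}$ (having finite $(n-1)$-measure) is removable for quasiconformal maps; you should not appeal to removability of the possibly wild sphere $\mathcal{S}$ directly. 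Second, and more seriously, your ``chaining'' of ring moduli through $\Omega_1$ and $\Omega_2$ tacitly assumes that for disjoint continua $A,B\subset\mathcal{S}$ the extremal ring separating them in $\Omega_1$ has modulus comparable to the one in $\Omega_2$. For the flat sphere this is a symmetry, but for an arbitrary topological $\mathcal{S}$ no reflection is available and this comparability is exactly what is at stake. The published proofs get the quasisymmetry of $h$ by a more careful route (collaring and boundary-behaviour estimates for quasiconformal maps of balls), so while you have correctly located the hard step, the argument as written does not yet close it.
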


Notice that this theorem is false when $n=2$.

An embedding $f$ of a metric space $(X,d_X)$ into a metric space $(Y,d_Y)$ is said to be $L$-\emph{bi-Lipschitz} if there exists $L\geq 1$ such that for any $x, y \in X$
\[\frac{1}{L}d_X(x,y) \leq d_Y(f(x),f(y)) \leq Ld_X(x,y). \]

A rectifiable Jordan curve $\g$ in $\mathbb{R}^2$ is called a $c$-\emph{chord-arc curve} if there exists $c>1$ such that  for any $x,y \in \g$, the length of the shorter arc  ${\g}'(x,y)$ in $\g\setminus \{x,y\}$ satisfies
\[ \ell ({\g}'(x,y)) \leq c |x-y|. \]
It is straightforward to see that a rectifiable curve $\g$ is a $c$-chord-arc curve if and only if it satisfies
\[ \ell (\g(x,y)) \leq C |x-y| \]
for all $x,y \in \g$ and some $C >1$;  here constants $c$ and $ C$ are quantitatively related.

Every $c$-chord-arc curve is, in fact, the image of $\mathbb S^1$ under an $L$-bi-Lipschitz homeomorphism of $\R^2$, where the constants $c$ and $L$ are quantitatively related; see\cite[p. 23]{Tukia-ext} and 
\cite[Proposition 1.13]{JeK}.

An embedding $f$ of a metric space $(X,d_X)$ into a metric space $(Y,d_Y)$ is said to be $\eta$-\emph{quasisymmetric} if there exists a homeomorphism $\eta \colon [0,\infty) \rightarrow [0,\infty)$ such that for all $x,a,b \in X$ 
and $t>0$ with $d_X(x,a) \leq t d_X(x,b)$,
\[d_Y(f(x),f(a)) \leq \eta(t)d_Y(f(x),f(b)). \]

A metric $n$-sphere $\mathcal S$ that is quasisymmetrically homeomorphic to  $\mathbb S^n$ is called a \emph{quasisymmetric sphere} when $n\ge 2$, and a \emph{quasisymmetric circle} when $n=1$.

Beurling and Ahlfors \cite{BerAhl} showed that a planar Jordan curve is a quasisymmetric circle if and only if it is a quasicircle. Tukia and Vaisala \cite{TuVa} proved that a metric $1$-sphere is a quasisymmetric circle if and 
if it is doubling and bounded turning.

The notion of linear local connectivity generalizes the $2$-point condition on curves to general sets. A set $X \subset \R^n$ is $\lambda$-\emph{linearly locally connected} (or $\lambda-\text{LLC}$) for $\lambda\geq 1$ if the 
following two conditions are satisfied.
\begin{enumerate}
\item ($\lambda-\text{LLC}_1$) If $x\in X$, $r>0$ and $y_1,y_2 \in B^n(x,r)\cap X$, then there exists a continuum $E\subset B^n(x,\lambda r)\cap X$ containing $y_1,y_2$.
\item ($\lambda-\text{LLC}_2$) If $x\in X$, $r>0$ and $y_1,y_2 \in X \setminus B^n(x,r)$, then there exists a continuum $E\subset X \setminus B^n(x,r/\lambda)$ containing $y_1,y_2$.
\end{enumerate}

Linear local connectivity was first studied in the work of Gehring and V\"ais\"al\"a \cite{GehVai} and appeared, under the term \emph{strong local connectivity}, in a paper of Gehring \cite{Gehext3}. In the latter, a set 
$X \subset \R^n$ is said to be strongly locally connected if it satisfies (1) and (2)  for \emph{all} $x\in\R^n$ instead of only for those $x\in X$.  Walker \cite{Walker} showed that any quasicircle is strongly locally connected.

Gehring and V\"ais\"al\"a \cite{GehVai} proved that if a domain $D \subset \R^n$ is quasiconformally equivalent to $\mathbb{B}^n$ then its complement  $\mathbb{R}^n \setminus \overline{D}$ must be $\text{LLC}$. It is easy to 
check that the $\text{LLC}$ property is preserved under quasisymmetry. As a consequence, every surface quasisymmetric to $\mathbb{S}^n$ or $\R^n$ satisfies the $\text{LLC}$ property.

A metric space $X$ is said to be \emph{Ahlfors $Q$-regular} if there is a constant $C>1$ such that the $Q$-dimensional Hausdorff measure $\mathcal{H}^Q$ of every open ball $B(a,r)$ in $X$ satisfies
\begin{equation}\label{eq:2regdefn}
C^{-1}r^Q \leq \mathcal{H}^Q(B(a,r)) \leq Cr^Q,
\end{equation}
when $0<r\leq \diam{X}$.

Bonk and Kleiner found in \cite{BonK} an intrinsic characterization of quasisymmetric $2$-spheres and then derived a readily applicable sufficient condition.

\begin{thm}[{\cite[Theorem 1.1, Lemma 2.5]{BonK}}]\label{thm:BonkKleiner}
Let $X$ be an Ahlfors $2$-regular metric space homeomorphic to $\mathbb{S}^2$. Then $X$ is quasisymmetric to  $\mathbb{S}^2$ if and only if $X$ is LLC.
\end{thm}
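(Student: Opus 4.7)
The reverse (``only if'') direction is essentially immediate from what has already been assembled in the excerpt: quasisymmetric maps between bounded metric spaces preserve the $\mathrm{LLC}$ condition quantitatively, and $\mathbb{S}^2$ is manifestly $\mathrm{LLC}$, so any metric sphere quasisymmetric to $\mathbb{S}^2$ inherits the property. Therefore the content lies entirely in the forward direction: starting from an Ahlfors $2$-regular, $\mathrm{LLC}$, topological $2$-sphere $X$, construct a quasisymmetric homeomorphism $f\colon\mathbb{S}^2\to X$.

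The plan is to produce $f$ as a limit of discrete parametrizations built from combinatorial approximations of $X$. For each small $\varepsilon>0$, take a maximal $\varepsilon$-separated set in $X$ and form the associated graph $G_\varepsilon$ whose vertices are the ball centers and whose edges record near-tangency of the $\varepsilon$-balls. The Ahlfors $2$-regularity bounds the vertex degree and the cardinality uniformly (packing arguments on $\mathcal{H}^2$-measure), while the $\mathrm{LLC}$ condition guarantees that $G_\varepsilon$ is the $1$-skeleton of a triangulation of the topological sphere $X$, with each triangle having diameter $\asymp\varepsilon$. Thus $G_\varepsilon$ provides a combinatorial sphere that, at scale $\varepsilon$, sees the local geometry of $X$.

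Next, I would invoke the Koebe--Andreev--Thurston circle packing theorem on each $G_\varepsilon$: because $G_\varepsilon$ is a triangulation of $\mathbb{S}^2$, it admits a circle packing realization on $\mathbb{S}^2$, unique up to Möbius transformations. Normalize these packings (for instance, by fixing three vertices) and let $f_\varepsilon\colon\mathbb{S}^2\to X$ send each packing disk center to its corresponding vertex in $X$, extending piecewise on triangles. The crux is to prove that the packings are \emph{uniformly round}, i.e.\ that the ratio of radii of adjacent disks is bounded independently of $\varepsilon$; once this is in place, $f_\varepsilon$ will be uniformly quasisymmetric with a control function $\eta$ independent of $\varepsilon$. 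A normal-family/Arzelà--Ascoli argument in the space of quasisymmetric embeddings then produces a subsequential limit $f$, which is the desired quasisymmetric homeomorphism.

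The hard part is the uniform roundness estimate. This is where all three hypotheses must be used in concert: Ahlfors $2$-regularity to compare combinatorial and metric measures (so that cardinalities of balls inside a region match the $\mathcal{H}^2$-measure of the region up to bounded factors), the $\mathrm{LLC}_1$ condition to connect nearby vertices through short paths of triangles, and $\mathrm{LLC}_2$ to separate distant vertices. Ring-domain modulus estimates, carried out combinatorially on $G_\varepsilon$ and matched against their metric counterparts using $2$-regularity, force adjacent circles in the packing to have comparable radii. Once this discrete modulus comparison is established, the remaining steps (passing to the limit, verifying that the limit is an actual homeomorphism rather than a degenerate map, and transporting the quasisymmetry control from $f_\varepsilon$ to $f$) are standard, but the roundness step is where the deep measure-theoretic and analytic content of the theorem resides.
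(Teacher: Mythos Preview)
This theorem is not proved in the paper at all; it is quoted verbatim from Bonk and Kleiner \cite{BonK} and used as a black-box ingredient in the proof of Theorem~\ref{thm:LCA-QS-main}. There is therefore no argument in the paper to compare your proposal against. Your sketch is a reasonable high-level outline of the actual Bonk--Kleiner strategy (graph approximations of $X$ at each scale, circle-packing realizations via Koebe--Andreev--Thurston, combinatorial modulus estimates exploiting $2$-regularity and LLC to force uniform roundness, then an Arzel\`a--Ascoli limit), but for the purposes of this paper none of that is needed: the authors simply cite the result and apply it.
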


For $x \in \R^n$ and $r>0$, define $B^n(x,r) = \{ y \in \mathbb{R}^n \colon |x-y| < r\}$ and $S^{n-1}(x_0,r) = \partial B^n(x_0,r)$; in particular,  $\mathbb{B}^n = B^n(0,1)$ and $\mathbb{S}^{n-1} = \partial \mathbb{B}^n$. 
In addition, let
\[ \R^n_+ = \{x=(x_1,\ldots, x_n)  \in \R^n \colon x_n \leq0 \}\]
be the upper half-space of $\R^n$ and $\R^n_- = \{x  \in \R^n \colon x_n \leq 0 \}$ be the lower half-space. For any $a = (a_1,\dots,a_n)\in \R^n$, denote by
\[ \pi(a) = (a_1,\dots,a_{n-1},0) \]
the projection of $a$ on the hyperplane $\R^{n-1}\times\{0\}$. For $x,y \in \R^n$, denote by $[x,y]$ the line segment having end points $x,y$ and by $l_{x,y}$ the infinite line containing the points $x,y$.

In the following, we write $u\lesssim v$ (resp. $u \simeq v$) when the ratio $u/v$ is bounded above  (resp. bounded above and below) by positive constants. These constants may vary, but are described in each occurrence.

\bigskip

\section{Level Sets to a Jordan Curve}\label{sec:levelsets}
We discuss properties of constant distance sets to a Jordan curve that will be used in the sequel. The main reference is \cite{VW}.

Let $\Omega$ be a planar Jordan domain with boundary $\G$. We define for each $\e>0$,  the $\e$-level set
\[ \g_{\e} = \{ x \in \Omega \colon \dist(x,\G) = \e\}.\]
In general $\g_\e$ need not be connected and if connected need not be a curve,  see \cite[Figure 1]{VW}. Properties of $\Omega$  ensuring  the $\e$-level sets to be Jordan curves, or uniform quasicircles, or uniform chord-arc 
curves for \emph{all} sufficiently small $\e$ have been studied in \cite{VW}.

We say $\Omega$ has the \emph{level Jordan curve  property} (or LJC \emph{property}), if there exists $\e_0>0$ such that the level set $\g_{\e}$ is a Jordan curve for every $0\leq \e \leq \e_0$. The Jordan domain $\Omega$ is 
said to have the \emph{level quasicircle property} (or LQC \emph{property}), if there exist $\e_0>0$ and $K\geq 1$ such that the level set $\g_{\e}$ is a $K$-quasicircle  for every $0\leq \e \leq \e_0$. Finally, the Jordan 
domain $\Omega$ is said to have the \emph{level chord-arc property} (or LCA \emph{property}), if there exist $\e_0>0$ and $C\geq 1$ such that $\g_{\e}$ is a $C$-chord-arc curve  for every $0\leq \e \leq \e_0$.

Sufficient conditions for a domain $\Omega$ to satisfy the LJC property or the LQC property can be given in terms of the chordal flateness of $\partial \Omega$.

\begin{prop}[{\cite[Theorem 1.1, Theorem 1.2]{VW}}]
Let $\Omega$ be a Jordan domain.
\begin{enumerate}
\item If there exists $r_0>0$ such that $\zeta_{\partial\Omega}(x,y)<1/2$ for all $x,y\in\partial\Omega$ with $|x-y|<r_0$, then $\Omega$ has the LJC property.
\item If $\zeta_{\partial\Omega} < 1/2$, then $\Omega$ has the LQC property.
\end{enumerate}
\end{prop}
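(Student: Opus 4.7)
The plan is to use the flatness bound to construct, for each point $x\in\G$ and each sufficiently small $\e$, a local bi-Lipschitz chart around $x$ in which $\G$ appears as a nearly-straight line segment; the level set $\g_\e$ then appears in the chart as a nearly-straight curve parallel to $\G$, hence is locally a Jordan arc. This geometric picture is the common engine behind both statements.

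For part (1), I would cover $\G$ by finitely many such local charts (for each fixed small $\e$), patch the resulting local arcs together using their overlap structure, and combine with the fact that $\g_\e$ separates the open neighborhood $\{\dist(\cdot,\G)<\e\}\cap\Omega$ from $\{\dist(\cdot,\G)>\e\}\cap\Omega$ to conclude that $\g_\e$ is a Jordan curve globally. The threshold $1/2$ in the flatness bound enters as the sharp condition under which the local charts can be made injective: if one could find $x_1,x_2\in\G$ with $\zeta_\G(x_1,x_2)\geq 1/2$ at small scales, the subarc $\G(x_1,x_2)$ could wrap around a ball of radius $\sim|x_1-x_2|$ centered at a point of $\g_{|x_1-x_2|/2}$ in a way incompatible with any local graph representation of $\g_\e$. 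The strict bound $\zeta_\G(x,y)<1/2$ at scales below $r_0$ rules out exactly this obstruction.

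For part (2), the uniform bound $\zeta_\G<1/2$ allows the same chart construction with quantitative bi-Lipschitz estimates depending only on $\zeta_\G$ and independent of $\e$. Since $\zeta_\G<1/2$ implies that $\G$ itself satisfies the 2-point condition and is a quasicircle, the level set $\g_\e$ inherits a 2-point condition from $\G$ by transferring the estimates through the charts: given $u,v\in\g_\e$, the images of $u,v$ in a common chart lie close to the horizontal axis at height $\e$, and the diameter of the corresponding subarc of $\g_\e$ is comparable to $\diam \G(\pi_\e(u),\pi_\e(v))+O(\e)$, which in turn is $\lesssim|u-v|$. The final constants depend only on $\zeta_\G$.

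The main obstacle is the construction of the local charts with quantitative bi-Lipschitz estimates. The flatness module $\zeta_\G$ is a first-order chordal deviation bound, but to produce bi-Lipschitz equivalences one needs multi-scale control: one must show that the flatness at a fixed scale propagates to nearby points and smaller scales with controlled constants, then assemble an approximating Reifenberg-type planar chart whose inverse supplies the desired bi-Lipschitz structure. This multi-scale promotion of the chordal flatness bound is the technical heart of \cite{VW}, and the scale-free threshold $1/2$ cannot be relaxed, as shown by the counterexample in Theorem~\ref{thm:VW-1/2}.
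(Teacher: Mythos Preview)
This proposition is not proved in the present paper; it is quoted verbatim from \cite{VW} (Theorems 1.1 and 1.2 there) and stated here without any argument. There is therefore nothing in this paper to compare your proposal against.

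As to the proposal itself: what you have written is a plausible heuristic narrative, but it is not a proof and does not commit to any concrete mechanism. The phrases ``local bi-Lipschitz chart in which $\G$ appears as a nearly-straight line segment'' and ``Reifenberg-type planar chart'' gesture toward the right geometric picture, but the actual content of \cite{VW} lies precisely in making this rigorous, and the difficulties are not where you place them. In particular, the flatness module $\zeta_\G$ does \emph{not} by itself yield bi-Lipschitz charts or a Reifenberg parametrization; Reifenberg-type constructions require two-sided flatness at all scales (i.e.\ the curve is close to a line \emph{and} the line is close to the curve), whereas $\zeta_\G(x,y)<1/2$ is a one-sided bound and says nothing about porosity. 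The arguments in \cite{VW} proceed instead by direct analysis of the distance function and its level sets, using the non-crossing and monotonicity properties of nearest-point segments (recalled in Section~\ref{sec:levelsets} of this paper) together with combinatorial arguments about how $\g_\e$ can fail to be a Jordan curve; the threshold $1/2$ enters because at that value one can have three collinear points $x_1,x_0,x_2$ with $x_1,x_2\in\G$ and $|x_0-x_1|=|x_0-x_2|=\e$, which is exactly the obstruction isolated in Lemmas~\ref{lem:boundary} and \ref{lem:gammadelta}. Your sketch does not engage with this mechanism.
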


In article \cite{VW}, a Jordan curve $\G$ is said to have  one of the properties LJC, LQC, or LCA,  if both components of $\R^2\setminus\G$ have that property.

A connection between the LQC property and the LCA property on $\G$ is given in \cite[Theorem 1.3]{VW}: \emph{A Jordan curve $\G$ has the \emph{LCA} property if and only if it is a chord-arc and has the \emph{LQC} property}. 
Since the proof of the LCA property for either component of  $\R^2 \setminus \G$ requires only the LQC property of the same component, the following result can be concluded.

\begin{prop}\label{prop:lqc+chord-arc}
A Jordan domain $\Omega$ in the plane satisfies the level chord-arc property if and only if it has the level quasicircle property and $\partial\Omega$ is a chord-arc curve.
\end{prop}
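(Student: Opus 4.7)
The proof splits into the two implications, both of which rest on [VW, Theorem 1.3] together with the authors' observation that the chord-arc estimate in that theorem decouples across the two sides of $\partial\Omega$. For the forward direction, assume $\Omega$ has the LCA property, so there exist $\epsilon_0>0$ and $C\geq 1$ such that every $\gamma_\epsilon$ with $0<\epsilon\leq\epsilon_0$ is a $C$-chord-arc Jordan curve. Since a $C$-chord-arc curve satisfies the $2$-point condition with a constant depending only on $C$, it is a $K(C)$-quasicircle, and so the LQC property of $\Omega$ follows at once. To prove that $\partial\Omega$ is chord-arc, I fix $x,y\in\partial\Omega$ with $|x-y|$ small enough that $\partial\Omega(x,y)$ is the subarc of smaller diameter. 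For each small $\epsilon$ I choose $x_\epsilon,y_\epsilon\in\gamma_\epsilon$ with $|x_\epsilon-x|,|y_\epsilon-y|\lesssim\epsilon$, and let $\alpha_\epsilon=\gamma_\epsilon(x_\epsilon,y_\epsilon)$ be the subarc of smaller diameter, so that $\ell(\alpha_\epsilon)\leq C|x_\epsilon-y_\epsilon|$. The uniform $2$-point condition forces $\diam\alpha_\epsilon\lesssim|x_\epsilon-y_\epsilon|$; passing to a Hausdorff-convergent subsequence as $\epsilon\to 0$ yields a continuum in $\partial\Omega$ containing $x$ and $y$ and of diameter $\lesssim|x-y|$, which must coincide with $\partial\Omega(x,y)$. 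Lower semicontinuity of $\mathcal{H}^1$ under Hausdorff convergence then delivers $\ell(\partial\Omega(x,y))\leq C|x-y|$, so $\partial\Omega$ is chord-arc.

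For the reverse direction, assume $\Omega$ has the LQC property and that $\partial\Omega$ is chord-arc. By [VW, Theorem 1.3], chord-arc together with LQC on \emph{both} components of $\R^2\setminus\partial\Omega$ implies the LCA property on both components. The authors indicate in the paragraph preceding the proposition that the portion of the [VW] proof establishing the chord-arc bound for the level sets \emph{inside} $\Omega$ relies only on the quasicircle constant of those interior level sets and on the chord-arc constant of $\partial\Omega$; no exterior data enters. I would therefore invoke the same argument, confined to the interior, to conclude that $\Omega$ has the LCA property.

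The principal obstacle lies in verifying this one-sided reading of the [VW] argument for the reverse direction: it is a matter of careful line-by-line inspection rather than new technique. The forward limit argument is routine once the non-wrapping of the subarcs $\alpha_\epsilon$ is controlled by the uniform $2$-point condition, ensuring that the Hausdorff limit is exactly $\partial\Omega(x,y)$ rather than some larger subarc.
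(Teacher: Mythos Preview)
Your reverse direction coincides with the paper's: both appeal to \cite[Theorem 1.3]{VW} together with the one-sided observation recorded just above the proposition, namely that the chord-arc estimate for the interior level sets in \cite{VW} uses only the interior LQC data and the chord-arc constant of $\partial\Omega$.

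For the forward direction the paper does essentially nothing beyond citing \cite{VW}: in the definitions of LJC, LQC, LCA in Section~\ref{sec:levelsets} the range is $0\le\e\le\e_0$, and throughout the paper $\gamma_0$ is identified with $\Gamma=\partial\Omega$ (see, e.g., the hypothesis ``$\gamma_\e$ is a $K$-quasicircle for every $\e\in[0,\e_0]$'' in Lemma~\ref{lem:LLC1}). Under that convention LCA already asserts that $\partial\Omega=\gamma_0$ is $C$-chord-arc, and LCA $\Rightarrow$ LQC is immediate because a chord-arc curve satisfies the $2$-point condition. Your limiting argument is therefore unnecessary in the paper's framework.

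If one insists on your strict reading $0<\e\le\e_0$, then your argument has a small gap at the proximity step. You claim $x_\e,y_\e\in\gamma_\e$ with $|x_\e-x|,|y_\e-y|\lesssim\e$, but this is exactly the content of Lemma~\ref{lem:proximity}, which \emph{presupposes} that $\partial\Omega$ is a quasicircle---something you have not yet derived from the $\e>0$ data. The cusp example immediately preceding Lemma~\ref{lem:proximity} shows that without that hypothesis $\dist(x,\gamma_\e)/\e$ may be unbounded. The fix is easy: one checks directly (using $\Delta_\e\nearrow\Omega$ and a compactness argument on $\partial\Omega$) that $\gamma_\e\to\partial\Omega$ in the Hausdorff metric, so one may take $x_\e\to x$, $y_\e\to y$ without any rate, and the rest of your lower-semicontinuity argument goes through unchanged.
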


\medskip

We state two basic properties related to  the distance function  which are used repeatedly throughout the paper. For these and more, see \cite[p. 216]{VW}.

Assume that $\Omega$ is a planar Jordan domain and that points $x, y \in \Omega$ and $x', y' \in \partial \Omega$ satisfy $|x-x'|=\dist(x,\partial \Omega)$ and $|y-y'|= \dist(y,\partial \Omega)$. Then

(\emph{non-crossing})\,  the line segments $[x,x']$ $[y,y']$ do not intersect except perhaps at their endpoints, unless $x,y,x',y'$ are collinear;

(\emph{monotonicity})\,  the distance function $\dist(\cdot\, ,\partial \Omega)$ is strictly monotone on $[x,x']$. Moreover, if $z\in [x,x']$ then $\dist(z,\partial \Omega) = |z-x'|$.

\medskip

\begin{lem}[{\cite[Lemma 4.1]{VW}}]\label{lem:orientation}
Let $\Omega$ be a planar Jordan domain and $\Lambda \subset \partial \Omega$ be a closed subarc. Assume that for some $\e> 0$ the level set $\g_{\e}$ is a Jordan curve. Then the set
\[ \{ x \in \g_{\e} \colon \dist(x,\Lambda) = \e\}, \]
if nonempty, is a subarc of $\g_{\e}$.
\end{lem}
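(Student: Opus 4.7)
The plan is first to observe that the set $A = \{x \in \gamma_\epsilon : \dist(x, \Lambda) = \epsilon\}$ is closed in $\gamma_\epsilon$ by continuity of the distance function, so every connected component of $A$ is either a point or a subarc of $\gamma_\epsilon$; it therefore suffices to show that $A$ is connected. I would argue by contradiction: assume $A$ has at least two components and pick $a, b$ in distinct components. Then each of the two closed arcs of $\gamma_\epsilon$ joining $a$ to $b$ must contain a point of $\gamma_\epsilon \setminus A$, for otherwise one such arc would connect $a$ to $b$ inside $A$. Choose such a point $c$ on one arc and $d$ on the other.

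Next, using the definition of $A$, I select nearest boundary points $a', b' \in \Lambda$ for $a, b$; since $c, d \notin A$, I select nearest points $c', d' \in \partial\Omega \setminus \Lambda$ for $c, d$. Each of the four segments $[a, a'], [b, b'], [c, c'], [d, d']$ has length $\epsilon$, and by the monotonicity property lies in the closed annular region $R$ bounded by $\gamma_\epsilon$ and $\partial\Omega$, with one endpoint on each boundary component of $R$.

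By the non-crossing property, the four segments are pairwise disjoint except possibly at endpoints. Cutting $R$ along any one of them produces a topological disk in which the remaining three are pairwise disjoint chords joining one side to the other, so cyclic order must be preserved: the cyclic order $a', c', b', d'$ of the foot-points on $\partial\Omega$ must match the cyclic order $a, c, b, d$ on $\gamma_\epsilon$. However, $a', b' \in \Lambda$ and $c', d' \notin \Lambda$, and since $\Lambda$ is a closed subarc of $\partial\Omega$, the pair $\{a', b'\}$ and the pair $\{c', d'\}$ are separated by the two endpoints of $\Lambda$ on $\partial\Omega$. The cyclic order on $\partial\Omega$ must therefore be $a', b', c', d'$ or $a', b', d', c'$, contradicting $a', c', b', d'$ and completing the argument.

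The main obstacle I anticipate is the handling of degeneracies: nearest points in $\partial\Omega$ need not be unique, two of the selected foot-points may coincide (e.g.\ $a' = b'$ when two level-set points share a common nearest point in $\Lambda$), and two of the four segments may be collinear, which is the one situation in which the non-crossing clause allows proper overlap. I would handle these by perturbing the choices of $c$ and $d$ slightly within the open set $\gamma_\epsilon \setminus A$, by selecting nearest-point representatives in the relative interior of the appropriate subarc of $\partial\Omega$ whenever there is a choice, and by treating collinear configurations as a degenerate limit of the generic cyclic-order picture; the topological heart of the argument is nevertheless the short contradiction above.
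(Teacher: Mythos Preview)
The paper does not prove this lemma; it is quoted verbatim from \cite[Lemma~4.1]{VW} and no argument is supplied here, so there is nothing in the present paper to compare your attempt against.

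That said, your outline is essentially the standard proof and is sound. The key topological fact you invoke---that four pairwise non-crossing arcs in a closed annulus, each joining the inner boundary to the outer boundary, must hit the two boundary circles in the same cyclic order---is exactly what drives the contradiction, and the non-crossing and monotonicity properties of nearest-point segments (recorded in Section~\ref{sec:levelsets}) supply precisely the disjointness you need. Your identification of the degeneracies (coinciding foot-points, collinear segments, non-unique nearest points) is accurate, and your proposed remedies are adequate: note in particular that $a',b'\in\Lambda$ while $c',d'\in\partial\Omega\setminus\Lambda$ forces $a'\ne c',d'$ and $b'\ne c',d'$, so the only possible coincidences are $a'=b'$ or $c'=d'$, and in either case the cyclic-order contradiction degenerates but does not disappear (three non-crossing spanning arcs in an annulus still preserve cyclic order, and two of the three foot-points lie in $\Lambda$ while one does not, or vice versa). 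One small point worth making explicit: to know that the four segments lie in the closed annulus between $\gamma_\epsilon$ and $\partial\Omega$, you need that every point inside $\gamma_\epsilon$ has distance at least $\epsilon$ from $\partial\Omega$; this follows because any segment from such a point to its nearest boundary point must cross $\gamma_\epsilon$.
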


In general $\dist(z, \g_\e)$ from points $z \in \G=\partial \Omega $ to the level sets $\g_\e$ need not have magnitude  $\e$, even when $\g_{\e}$ are Jordan curves. The  curve 
$\G=\{(s,t)\colon |t|=s^2, 0\leq s <1\} \cup  \{(1,t) \colon -1\leq t\leq 1\}$ in the Cartesian coordinates $(s,t)$ of the plane, is an example. This would not happen when $\G$ is a quasicircle.

\begin{lem}\label{lem:proximity}
Suppose that $\Omega$ is a Jordan domain whose boundary is  a $K$-quasicircle and that $\g_{\e}$ is a Jordan curve for some $\e>0$. Then there exists $M\geq 1$ depending only on $K$ such that for any $x \in \partial \Omega$, 
$\e \leq \dist(x,\g_{\e}) \leq M\e$.
\end{lem}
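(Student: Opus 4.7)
The lower bound $\dist(x, \g_\e) \geq \e$ is immediate, since every $y \in \g_\e$ satisfies $|y - x| \geq \dist(y, \partial\Omega) = \e$ whenever $x \in \partial\Omega$.

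For the upper bound, my plan is to invoke the interior corkscrew property of $K$-quasidisks: there exists $c = c(K) \in (0,1)$ such that, for every $x \in \partial\Omega$ and every admissible scale $r > 0$, one can find $z \in \Omega \cap B^2(x, r)$ with $\dist(z, \partial\Omega) \geq cr$. Applied with $r = \e/c$, this gives a point $z \in \Omega$ satisfying $|x - z| \leq \e/c$ and, because $x \in \partial\Omega$,
\[ \e \leq \dist(z, \partial\Omega) \leq |z - x| \leq \e/c. \]
Let $z' \in \partial\Omega$ be a closest boundary point to $z$, so that $|z - z'| = \dist(z, \partial\Omega)$. By the monotonicity property along the segment $[z, z']$, for every $w \in [z, z']$ one has $\dist(w, \partial\Omega) = |w - z'|$; hence there is a unique $y \in [z, z']$ with $\dist(y, \partial\Omega) = \e$, which lies on $\g_\e$ and satisfies $|z - y| = |z - z'| - \e \leq (1 - c)\e/c$.

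The triangle inequality then yields
\[ |x - y| \leq |x - z| + |z - y| \leq \frac{\e}{c} + \frac{(1-c)\e}{c} = \frac{(2-c)\e}{c}, \]
so one can take $M = (2-c)/c$, depending only on $K$. The main obstacle is the interior corkscrew step, which I regard as a standard consequence of the quasicircle hypothesis, e.g.\ via the equivalence between planar quasidisks and uniform (hence John) domains. If a self-contained argument is preferred, the corkscrew can be extracted directly from the $2$-point condition \eqref{eq:3pts}: pick two points on $\partial\Omega$ on opposite sides of $x$ at Euclidean distance $\simeq \e$, use the $2$-point condition to confine the short sub-arc into a bounded neighborhood of $x$, and select $z$ inside the resulting topological region whose distance to both arcs is $\gtrsim \e$.
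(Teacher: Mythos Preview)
Your argument is correct. The lower bound is immediate, and for the upper bound the interior corkscrew property of quasidisks (equivalently, uniform or John domains in the plane) does exactly what you need; the monotonicity of $\dist(\cdot,\partial\Omega)$ along $[z,z']$ then places a point of $\g_\e$ within distance $(2-c)\e/c$ of $x$. One small technical point: the corkscrew condition is usually stated only for scales $0<r\le \diam\Omega$, so you should note that if $\e/c>\diam\Omega$ then trivially $\dist(x,\g_\e)\le\diam\overline\Omega<\e/c\le M\e$, which is covered by the same $M$.

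The paper takes a different, fully self-contained route that avoids invoking corkscrew as a black box. Given $x\in\partial\Omega$ with $\dist(x,\g_\e)>\e$, it extracts a maximal subarc $\Lambda\subset\partial\Omega$ containing $x$ whose interior points all lie at distance greater than $\e$ from $\g_\e$; the endpoints $p,q$ of $\Lambda$ then satisfy $\dist(p,\g_\e)=\dist(q,\g_\e)=\e$. Using the non-crossing property of nearest-point segments and a short topological argument on $\g_\e$, the paper shows that the nearest points $\tilde p,\tilde q\in\g_\e$ must coincide, whence $|p-q|\le 2\e$, and the $2$-point condition \eqref{eq:3pts} gives $\diam\Lambda\le 2C\e$ and thus $\dist(x,\g_\e)\le(2C+1)\e$. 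Your approach is shorter and, incidentally, does not use the hypothesis that $\g_\e$ is a Jordan curve; the paper's approach, by contrast, genuinely uses the curve structure of $\g_\e$ but stays entirely within the elementary toolkit (non-crossing, monotonicity, $2$-point condition) already set up in Section~\ref{sec:levelsets}. Your closing sketch of how to recover corkscrew directly from \eqref{eq:3pts} is too vague to stand on its own, so if self-containment is desired you would need to either flesh that out or cite a reference for the quasidisk $\Rightarrow$ uniform/John implication.
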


\begin{proof}
Suppose that $x\in \partial \Omega$ is a point having $\dist(x,\g_{\e})>\e$. By continuity, there exists a subarc $\Lambda$ of $\partial \Omega$ containing $x$ in its interior such that  $\dist(p,\g_{\e})=\dist(q,\g_{\e})=\e$ at 
its endpoints $p$ and $ q$ and that $\dist(y,\g_{\e})>\e$ at each interior point $y$ of $\Lambda$. Take $\tilde p, \tilde q \in \g_\e$ with $\dist(p,\g_{\e})=|p-\tilde p|$ and $\dist(q,\g_{\e})=|q-\tilde q|$, and let $\sigma$ be 
the component of $\g_\e \setminus \{\tilde p, \tilde q\}$ which together with $\Lambda \cup [p,\tilde p]\cup [q,\tilde q]$ encloses a Jordan domain contained in $\Omega\setminus \D_\e$. We further require that  choices of 
$\tilde p,\tilde q$ are made in such a way that $\sigma$ is minimal.

We claim that $\tilde p= \tilde q$. Otherwise, take $z$ to be an interior point of $\sigma$ and  take $z'$ to be a point in $ \G$ with $|z-z'|=\e$. By the non-crossing property, $z'$ must be in the interior of $\Lambda$; this 
contradicts to the definition of $\Lambda$. Therefore $\tilde p$ and $\tilde q$ must be the same, which implies that $|p-q|\leq 2\e$.

Since $\partial \Omega$ is a $K$-quasicircle, there exists $C>1$ depending only on $K$  so that
\[\dist(x, \g_\e) \leq |x-p| + |p-\tilde p| \leq \diam \Lambda +\e \leq C|p-q| +\e \leq (2C+1)\e.\qedhere \]
\end{proof}

For a Jordan domain $\Omega$ and a number $\e>0$, consider the open sets
\[ \D_{\e} = \{ x\in \Omega \colon \dist(x,\Omega) > \e\}. \]

In general $\D_{\e}$ need not be connected, and $\overline{\D_\e}$ and $\D_\e \cup \g_\e$ may not be the same. Nevertheless we have the following.

\begin{lem}[{\cite[Lemma 4.5]{VW}}]\label{lem:appr}
Let $\Omega$ be a Jordan domain and $\e > 0$. Then, every  connected component of $\D_{\e}$ is a Jordan domain.
\end{lem}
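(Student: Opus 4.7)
The plan is to show that any connected component $U$ of $\D_\e$ is open and bounded, is simply connected, and has $\partial U$ a Jordan curve. Openness is immediate since $\D_\e$ is the intersection of $\Omega$ with the open superlevel set $\{\dist(\cdot,\partial\Omega)>\e\}$ of a continuous function; boundedness is inherited from $\Omega$. So the real content is the simple connectivity of $U$ and the Jordan-curve property of $\partial U$.

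For simple connectivity, I would show that $\R^2\setminus U$ is path-connected, so that $U$ has no bounded complementary component. The set $\R^2\setminus\Omega$ is connected and contains a neighborhood of infinity, so it suffices to connect each $p\in\Omega\setminus U$ to $\partial\Omega$ through $\R^2\setminus U$. If $\dist(p,\partial\Omega)\leq\e$, pick a nearest $p'\in\partial\Omega$; by monotonicity of the distance function along the segment, $\dist(z,\partial\Omega)=|z-p'|\leq\e$ for every $z\in[p,p']$, so $[p,p']$ misses $\D_\e\supset U$. If instead $\dist(p,\partial\Omega)>\e$ and $p\notin U$, then $p$ belongs to a component $V\neq U$ of $\D_\e$; since $\partial V\subset\g_\e$ and $U\subset\{\dist>\e\}$, the closure $\overline V$ is disjoint from $U$. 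Join $p$ inside $V$ to any $q\in\partial V$, then to $\partial\Omega$ by a radial segment as above; the whole path avoids $U$.

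The main task is then to show $\partial U\subset\g_\e$ is a Jordan curve. My strategy is to establish local connectivity of $\partial U$ and then apply Carath\'eodory's extension theorem to the Riemann map $f:\B^2\to U$. For local connectivity, given $x\in\partial U$ and a nearest point $x'\in\partial\Omega$, the non-crossing property keeps the radial segments from nearby boundary points of $\Omega$ from intersecting, and monotonicity controls the distance profile along each such segment; combined with the local connectivity of the Jordan curve $\partial\Omega$, this should let me match a short subarc $\Lambda\subset\partial\Omega$ containing $x'$ with a connected portion of $\partial U$ through $x$, in the spirit of Lemma \ref{lem:orientation}. Once $\partial U$ is locally connected, $f$ extends continuously to $\overline{\B^2}$, and it remains to show the extension is injective on $\mathbb{S}^1$. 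An identification $\bar f(p)=\bar f(q)=x$ for distinct $p,q\in\mathbb{S}^1$ would produce a Jordan loop in $\overline U$ meeting $\partial U$ only at $x$; the bounded complementary region of this loop not containing $f(0)$ would have to enclose either another component of $\D_\e$ or a pocket of $\{\dist<\e\}$, yet each of these is joined to the outside of the loop by the very monotonicity segment from $x$ to $x'$, giving a contradiction.

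The principal obstacle is the Jordan-curve step: the global level set $\g_\e$ can genuinely branch or pinch along the medial axis of $\Omega$, so it is not in general a simple closed curve. What must be exploited is that $\partial U$ bounds a simply connected planar domain on one side, which topologically forbids branches of $\g_\e$ along $\partial U$ that are accessible from $U$. Quantifying this via the non-crossing and monotonicity properties of the distance function is the technical heart of the argument.
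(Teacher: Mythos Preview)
The paper does not actually prove this lemma here: it is quoted without proof from \cite[Lemma 4.5]{VW}. So there is no argument in the present paper to compare your proposal against. I can still comment on the soundness of your plan.

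Your simple-connectivity step is fine, and in fact admits a uniform formulation: for \emph{any} $p\in\Omega\setminus U$ the radial segment $[p,p']$ to a nearest boundary point avoids $U$. When $\dist(p,\Gamma)\le\e$ this is the case you wrote; when $\dist(p,\Gamma)>\e$ the initial portion $[p,q)$ up to the unique $\e$-level point $q$ lies in the component $V\ni p$ of $\D_\e$, and $\overline V\cap U=\emptyset$. So $\R^2\setminus U$ is path-connected and $U$ is simply connected.

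The Jordan-curve step, however, has two genuine gaps beyond the ones you already flag. First, for local connectivity you appeal ``in the spirit of Lemma~\ref{lem:orientation}'', but that lemma \emph{assumes} $\gamma_\e$ is a Jordan curve, which is what you are trying to prove; the global level set $\gamma_\e$ may branch, so you must work intrinsically with $\partial U$, not with $\gamma_\e$. The exterior-ball observation you are implicitly using (for each $x\in\partial U$, the disk $\overline B(x',\e)$ lies in $\R^2\setminus U$) is the right geometric input, but turning it into local connectivity of $\partial U$ still needs a real argument.

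Second, your injectivity sketch is logically incomplete. Running the monotone-segment argument carefully, one finds that if $y\in R\setminus U$ then the segment $[y,y']$ must exit $R$ through $x$; since $\dist(x,\Gamma)=\e$ this forces $\dist(y,\Gamma)>\e$. Hence $R\setminus U\subset\D_\e$, so $R\setminus U$ is \emph{open} in $R$ as well as closed, and since $R$ is connected and meets $U$, one concludes $R\subset U$. This is the \emph{opposite} of what you asserted. The contradiction then has to come from the analytic side: $R\subset U$ forces the boundary arc $A\subset\mathbb S^1$ between $p$ and $q$ to satisfy $\bar f(A)=\{x\}$, and a holomorphic function on $\B^2$ that extends continuously to an arc of $\mathbb S^1$ with constant value there is identically constant (Schwarz reflection plus the identity theorem). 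You do not mention this analytic input, and without it the injectivity argument does not close.
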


\begin{lem}[{\cite[Remark 4.13]{VW}}]\label{lem:boundary}
Suppose that $\Omega$ is a Jordan domain and that for some $\e > 0$, $\D_{\e} \neq \emptyset$,  $\g_{\e}\cup \D_{\e}$ is connected, and $\overline\D_{\e} \subsetneq \g_{\e}\cup \D_{\e} $. Then, there exists\ a component $D$ of 
$\D_{\e}$ and collinear points $x_0\in \partial D$ and $x_1,x_2 \in \G$  such that
\[ |x_0-x_1| = |x_0-x_2| = \e. \]
\end{lem}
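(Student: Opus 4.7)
I aim to produce the point $x_0$ as a frontier point lying simultaneously on $\partial \D_\e$ and in the closure of $C := \g_\e \setminus \overline{\D_\e}$, and then to extract from its dual role two opposing geometric constraints on the set
\[ V(x_0) := \bigl\{(y' - x_0)/\e : y' \in \G,\ |y' - x_0| = \e\bigr\} \]
of unit vectors from $x_0$ to its nearest boundary points. These will force $V(x_0)$ to contain an antipodal pair $\pm u_1$, giving the two collinear boundary points $x_1, x_2 \in \G$ at distance $\e$ from $x_0$.

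First I note that $\partial \D_\e \subseteq \g_\e$, since a limit of points with $\dist(\cdot, \G) > \e$ has distance $\geq \e$, and distance $>\e$ would place it in the open set $\D_\e$. Hence $\overline{\D_\e} \subseteq \D_\e \cup \g_\e$. The hypothesis makes $C$ nonempty, so $\g_\e \cup \D_\e = \overline{\D_\e} \sqcup C$ with $C$ relatively open in the union; connectedness of the union forces $\overline{\D_\e}$ not to be relatively open, so some $x_0 \in \overline{\D_\e}$ is a limit of points $y_n \in C$. Openness of $\D_\e$ implies $x_0 \notin \D_\e$, hence $x_0 \in \partial \D_\e \subseteq \g_\e$; a pigeonhole over the (at most countably many) components of $\D_\e$ places $x_0 \in \partial D$ for one component $D$.

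Next I extract the two opposing constraints on $V(x_0)$. From $x_0 \in \overline{\D_\e}$, choose $z_n \in \D_\e$ with $z_n \to x_0$ and let $w$ be a subsequential limit of $(z_n - x_0)/|z_n - x_0|$. For each $y' \in \G$ with $|x_0 - y'| = \e$, expanding $|z_n - y'|^2 > \e^2 = |x_0 - y'|^2$ and passing to the limit gives $w \cdot (y' - x_0) \leq 0$, so $V(x_0)$ lies in the closed half-circle $\{u : w \cdot u \leq 0\}$. Conversely, each $y_n \in C$ has a neighborhood contained in $\{\dist(\cdot, \G) \leq \e\}$, so for any direction $v$ a nearest boundary point $y_t'$ of $y_n + tv$ satisfies
\[ |y_n - y_t'|^2 + 2tv \cdot (y_n - y_t') + t^2 = |y_n + tv - y_t'|^2 \leq \e^2 \leq |y_n - y_t'|^2, \]
forcing $v \cdot (y_t' - y_n) \geq t/2 > 0$. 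As $t \to 0$, a subsequence of $y_t'$ converges to some $y^* \in \G$ with $|y_n - y^*| = \e$, yielding $w' := (y^* - y_n)/\e \in V(y_n)$ with $v \cdot w' \geq 0$. Since this holds for every $v$, a separating-hyperplane argument gives $0 \in \operatorname{conv} V(y_n)$; upper semicontinuity of the nearest-point set (from closedness of $\G$) then transfers this to $0 \in \operatorname{conv} V(x_0)$.

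Planar convex geometry finishes the job. Write $\sum_{i=1}^k \lambda_i v_i = 0$ with $v_i \in V(x_0)$, $\lambda_i > 0$, $\sum \lambda_i = 1$; dotting with $w$ gives $\sum \lambda_i (w \cdot v_i) = 0$ with every summand $\leq 0$, forcing $w \cdot v_i = 0$ for each $i$. Each $v_i$ is then one of $\pm u_1$ for a fixed unit vector $u_1 \perp w$, and both signs must appear so the combination sums to zero. Setting $x_1 = x_0 + \e u_1$ and $x_2 = x_0 - \e u_1$ exhibits the collinear triple. The step I expect to be the main obstacle is producing $0 \in \operatorname{conv} V(y)$ for $y \in C$: the distance function is non-smooth, so this ``first-order local-maximum'' condition must be derived directly from careful nearest-point selection, and its passage to the limit $y_n \to x_0$ demands the correct semicontinuity for nearest points on $\G$.
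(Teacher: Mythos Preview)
The paper does not supply a proof of this lemma; it is quoted from \cite[Remark~4.13]{VW} without argument. So there is nothing in the paper to compare against, and I assess your proposal on its own.

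The geometric core is correct and elegant. The extraction of the half-plane constraint $V(x_0)\subset\{u : w\cdot u\le 0\}$ from the $\D_\e$-side, and of $0\in\operatorname{conv}V(y_n)$ from the local maximality of $\dist(\cdot,\G)$ at points of $C$, are both sound; so is the passage to $0\in\operatorname{conv}V(x_0)$ via upper semicontinuity of the nearest-point set and Carath\'eodory. The final planar step forcing an antipodal pair $\pm u_1\in V(x_0)$ is fine.

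There is, however, a genuine gap in the step where you place $x_0$ on $\partial D$ for a single component $D$ of $\D_\e$. Your ``pigeonhole over the (at most countably many) components'' is fallacious: given $z_n\in\D_\e$ with $z_n\to x_0$, each $z_n$ lies in some component $D_{i(n)}$, but countability of the index set does \emph{not} force any one component to contain infinitely many of the $z_n$---they may lie in pairwise distinct components. In general $\partial\D_\e$ can be strictly larger than $\bigcup_i\partial D_i$ (think of a sequence of disjoint Jordan components accumulating at a point that is in none of their closures), so $x_0\in\partial\D_\e$ does not by itself yield $x_0\in\partial D$ for some $D$. You need an additional argument here: for instance, show from the connectedness of $\g_\e\cup\D_\e$ that some component $D$ satisfies $\partial D\cap\overline{C}\neq\emptyset$, and then run your half-plane/convex-hull dichotomy at a point of that intersection, choosing the approximating sequence $z_n$ inside that particular $D$. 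As written, this step is not justified, though the remainder of the proof is in good shape.
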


\begin{lem}[{\cite[Remark 4.15]{VW}}]\label{lem:gammadelta}
Suppose that $\Omega$ is a Jordan domain and that for some $\e>0$, $\D_{\e}\neq\emptyset$ and $\g_{\e}\cup\D_{\e}$ is not connected. Then, there exist $\d \in (0,\e)$, a component $D$ of $\D_{\d}$, and collinear points 
$x_0 \in \partial D$ and $x_1,x_2 \in \G$ such that
\[ |x_0-x_1| = |x_0-x_2| = \d. \]
\end{lem}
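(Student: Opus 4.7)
The plan is to descend to a critical value $\d_0 \in (0,\e)$ at which two components of $\g_\e \cup \D_\e$ first coalesce as $\d$ decreases, and then to extract a pinch configuration at $\d_0$.

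Fix distinct components $A, B$ of $K_\e := \g_\e \cup \D_\e$, and for each $\d \in (0,\e]$ let $A_\d, B_\d$ denote the components of $K_\d$ containing $A, B$ respectively. Paths in $\Omega$ from $A$ to $B$ stay at positive distance from $\G$, so $A_\d = B_\d$ for all sufficiently small $\d$; combined with the monotonicity $A_\d = B_\d \Rightarrow A_{\d'} = B_{\d'}$ for $\d' \leq \d$, this lets me set
\[
\d_0 := \sup\{\d \in (0,\e) : A_\d = B_\d\} \in (0,\e).
\]
Each $A_\d$ is compact (it lies in $\{x\in\Omega : \dist(x,\G) \geq \d\}$, which is closed and bounded away from $\G$), so a decreasing-intersection argument applied to $\{A_{\d_n}\}$ along any sequence $\d_n \uparrow \d_0$ forces $A_{\d_0} = B_{\d_0} =: C$, a single connected component of $K_{\d_0}$ containing $A \cup B$. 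Meanwhile, for $\d > \d_0$ the sets $A_\d, B_\d$ are distinct components of $K_\d$, so their ascending unions $\tilde A := \bigcup_{\d>\d_0} A_\d$ and $\tilde B := \bigcup_{\d>\d_0} B_\d$ are disjoint connected subsets of $\D_{\d_0}$; these sit inside distinct components $U$ and $V$ of $\D_{\d_0}$, since a connecting path in $\D_{\d_0}$ would stay in $K_{\d_0+\mu}$ for some $\mu>0$ by compactness and force $A_{\d_0+\mu}=B_{\d_0+\mu}$, contradicting the choice of $\d_0$.

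Since $C$ is connected, contains the disjoint closed sets $\overline U, \overline V$, and at most contains other components of $\D_{\d_0}$ in between, there must exist a ``bridge'' of $\g_{\d_0}$-points inside $C$ connecting $\overline U$ to $\overline V$. I would pick a point $x_0 \in \partial U \cap \g_{\d_0}$ where $\overline U$ first meets this bridge; then $x_0 \in \partial D$ for $D := U$, with $\dist(x_0,\G) = \d_0$. Next, $x_0$ must admit at least two distinct nearest boundary points $x_1, x_2 \in \G$, for otherwise $\dist$ would be smooth at $x_0$ with $\g_{\d_0}$ a local smooth arc, and $U$ would extend smoothly past $x_0$, contradicting $x_0$ being a tip of $U$ that only grows out into the bridge as $\d$ decreases. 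Finally, $x_1, x_0, x_2$ must be collinear with $x_0$ between them, for if $\angle x_1 x_0 x_2 < \pi$ one could displace $x_0$ slightly in a direction lying in both open half-planes behind $x_0 x_1$ and behind $x_0 x_2$, strictly increasing $\dist(\cdot,\G)$ and placing $x_0$ in the interior of $\D_{\d_0}$ rather than on $\partial U$.

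The main obstacle is pinning down the pinch point $x_0 \in \partial U \cap \g_{\d_0}$ rigorously. When $C$ harbors many intermediate components of $\D_{\d_0}$, the naive ``meeting of $\overline U$ and $\overline V$'' need not occur directly, and one must either iterate (finding a pinch between $U$ and the next component along the chain from $U$ to $V$ in $C$) or adapt the argument of Lemma \ref{lem:boundary} to the single component $C$ of $K_{\d_0}$ in place of the full level set. The collinearity step also requires a careful quantitative perturbation argument at $x_0$, ruling out \emph{all} small displacements that would raise the distance to $\G$.
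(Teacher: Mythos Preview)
The paper does not prove this lemma; it is quoted as \cite[Remark 4.15]{VW} without argument, so there is no in-paper proof to compare your plan against.

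Your plan follows the natural line one would expect from \cite{VW}: descend to a critical level $\d_0$ where two pieces of $K_\d$ first merge, then locate a pinch point $x_0\in\partial D$ for some component $D$ of $\D_{\d_0}$ and show it is the midpoint of a chord of $\G$. The first half (existence of $\d_0$, and that $\D_{\d_0}$ has two components $U,V$ lying in one component $C$ of $K_{\d_0}$) is correct. The second half needs sharpening in two places, both of which you flag. The smoothness heuristic (``$\dist$ would be smooth with $\g_{\d_0}$ a local smooth arc'') is not available for a general Jordan $\G$; replace it by a direct perturbation of $x_0$ along the ray away from its unique nearest point. The collinearity step as written only treats two nearest points; with, say, nearest points at $0^\circ,120^\circ,240^\circ$ on $S^1(x_0,\d_0)$ there is no diametric pair yet no displacement of $x_0$ increases $\dist(\cdot,\G)$, so the argument must additionally use that $x_0$ is not an isolated local maximum in $C$. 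This is exactly where the pinch structure has to enter --- either two component closures of $\D_{\d_0}$ meet at $x_0$ and sit on opposite sides, or $x_0\in C\setminus\overline{\D_{\d_0}}$ and one reruns the configuration behind Lemma~\ref{lem:boundary}. Your final paragraph correctly identifies that the case of many intermediate components of $\D_{\d_0}$ inside $C$ is where the bookkeeping becomes delicate; iterating to an adjacent pair, or localizing Lemma~\ref{lem:boundary} to $C$, is the right way through.
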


\section{Quasispheres  over quasidisks}\label{sec:quasispheres}
We prove Theorem \ref{thm:quasisphere_t} in this section. For the proof, we need some well-known inequalities  from the classical function theory.

\subsection{Koebe estimates}
The \emph{Koebe 1/4 Theorem} states that \emph{if $f$ is a conformal map  from the unit disk $\B^2$ onto a simply connected domain $\Omega$, then}
\[|f''(0)/f'(0)| \le 2\]
\emph{and}
\[\dist (f(0),\partial \Omega)\geq |f'(0)| /4.\]
It follows by scaling that
for all $x\in \B^2$
\begin{equation}\label{eq:onequarter}
\frac{\dist(f(x),\partial \Omega)}{1 - |x|^2}
\leq |f'(x)|
\leq \frac{4\,\dist(f(x),\partial \Omega)}{1 - |x|^2}.
\end{equation}
In fact, there exists an absolute constant $A>1$ so that for all $x_0\in \mathbb B^2$ and
$x,  x_1, x_2\in B^2(x_0,(1-|x_0|)/2)\subset \B^2$
\begin{equation}\label{eq:onequarter-A}
\frac{1}{A}\,\frac{ |f(x_1)-f(x_2)|}{|x_1-x_2|}
 \leq |f'(x)|
 \leq A\,\frac{|f(x_1)-f(x_2)|}{|x_1-x_2|}
\end{equation}
and
\[ \dist(f(x_0),\partial \Omega)/A \leq \diam f(B^2(x_0,(1-|x_0|)/2))\leq A\, \dist(f(x_0),\partial \Omega). \]
See for example the books \cite{GarMar} and \cite{Pommerenke-1992}.

\bigskip

Set $D_x= \frac{\dist(f(x),\partial \Omega)}{1 - |x|} $. Then it follows from \eqref{eq:onequarter}, \eqref{eq:onequarter-A} and the Lipschitz continuity of the distance function $\dist(\cdot, \partial \Omega)$ that for all 
$x_0\in \mathbb B^2$ and $x_1, x_2\in B^2(x_0,(1-|x_0|)/2)\subset \B^2$
\begin{align}\label{eq:D-D}
|D_{x_1}-D_{x_2}| &=
\left |\frac{\dist(f(x_1),\partial \Omega)}{1 - |x_1|}-
\frac{\dist(f(x_2),\partial \Omega)}{1 - |x_2|}
\right | \nonumber
\\
&\leq \frac{(1 - |x_2|)|\dist(f(x_1),\partial \Omega)-\dist(f(x_2),\partial \Omega)|+|x_1-x_2|  \dist(f(x_2),\partial \Omega)}{(1 - |x_1|)(1 - |x_2|)}\nonumber
\\
&\leq \frac{|f(x_1)-f(x_2)|} {1 - |x_1|} + \frac{|x_1-x_2|}{1 - |x_1|}\frac{\dist(f(x_2),\partial \Omega)}{1 - |x_2|} \nonumber
\\
&\leq 8 A^2 \,\frac{|f'(x_0)||x_1-x_2|}{1 - |x_0|}.
\end{align}

\subsection{A class of $3$-dimensional quasiballs}
We  show that for any simply connected domain $\Omega$ in $\R^2$, the domain enclosed by the double-dome-like surface $\Sigma(\Omega,t)$ is quasiconformally equivalent to the unit ball $\B^3$.

\begin{thm}\label{thm:quasiball-cone}
Let  $\Omega$ be a simply connected domain in $\R^2$. Then the double cone  with base $\Omega$,
\begin{equation}\label{eq:coneK}
\mathcal K (\Omega,t) = \{(x,z) \colon    x\in {\Omega},\, |z|< \,\text{dist}(x, \partial \Omega)\},
\end{equation}
is $K_0$-quasiconformal homeomorphic to the unit ball $\B^3$,  for some absolute constant $K_0 > 1$.
\end{thm}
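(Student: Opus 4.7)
The plan is to construct an explicit quasiconformal homeomorphism from the model bipyramid $\mathcal{K}(\B^2,t)=\{(x,z):x\in\B^2,\,|z|<1-|x|\}$ onto $\mathcal{K}(\Omega,t)$ via a Riemann map, and to identify the model with $\B^3$ by a bi-Lipschitz change of norms. Since $\mathcal{K}(\B^2,t)$ coincides with the unit ball $\{|x|+|z|<1\}$ of a norm on $\R^3$, the equivalence of norms on $\R^3$ provides a bi-Lipschitz (in particular absolute-constant quasiconformal) homeomorphism $\mathcal{K}(\B^2,t)\to\B^3$. Thus it suffices to produce a quasiconformal homeomorphism $F:\mathcal{K}(\B^2,t)\to\mathcal{K}(\Omega,t)$ with absolute distortion.

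Let $f:\B^2\to\Omega$ be a Riemann map and set $D_x=\dist(f(x),\partial\Omega)/(1-|x|)$ as in \eqref{eq:D-D}. Define
\[
F(x,z)=(f(x),\,D_x\, z),\qquad (x,z)\in \mathcal{K}(\B^2,t).
\]
On the fiber over $x$, the interval $|z|<1-|x|$ scales linearly to $|D_x z|<\dist(f(x),\partial\Omega)$, so $F$ is a bijection onto $\mathcal{K}(\Omega,t)$. The Riemann map $f$ is smooth on $\B^2$, and the Koebe-type estimate \eqref{eq:D-D} shows that $D_x$ is locally Lipschitz on $\B^2$. Hence $F\in W^{1,\infty}_{\text{loc}}\subset W^{1,3}_{\text{loc}}$; orientation preservation follows from $D_x>0$ and the fact that $f$ is conformal.

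The heart of the argument is a pointwise distortion bound. The differential $DF(x,z)$ is block lower triangular: the upper $2\times 2$ block is $Df(x)$, a conformal matrix of operator norm $|f'(x)|$, the rightmost column above is zero, and the bottom row equals $(z\,\nabla D(x),\,D_x)$. The Jacobian therefore factors as
\[
J_F(x,z)=|f'(x)|^2\,D_x.
\]
From \eqref{eq:onequarter} one reads off $|f'(x)|/4\le D_x\le 2|f'(x)|$, so $J_F\simeq |f'(x)|^3$. For the operator norm, \eqref{eq:D-D} gives $|\nabla D(x)|\lesssim |f'(x)|/(1-|x|)$; combined with the built-in constraint $|z|<1-|x|$ this yields $|z\,\nabla D(x)|\lesssim |f'(x)|$, and together with $D_x\lesssim |f'(x)|$ one concludes $\|DF(x,z)\|\lesssim |f'(x)|$. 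Hence $\|DF\|^3/J_F$ is bounded by an absolute constant $K_0$, and $F$ is $K_0$-quasiconformal.

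The main obstacle is controlling the bottom row of $DF$, that is, the oscillation of the scaling factor $D_x$ as $x$ approaches $\partial\B^2$. A priori $\nabla D$ can blow up, but the Koebe distortion estimate bounds it at the precise rate $|f'(x)|/(1-|x|)$ that is annihilated by the factor $|z|\le 1-|x|$ encoded in the geometry of the model cone. This geometric cancellation is what produces an absolute distortion constant, independent of the simply connected base $\Omega$.
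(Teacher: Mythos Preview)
Your proof is correct and follows essentially the same approach as the paper: the identical map $F(x,z)=(f(x),D_x z)$ from the model bipyramid $\mathcal{C}$ to $\mathcal{K}(\Omega,t)$, controlled by the same Koebe estimates \eqref{eq:onequarter}--\eqref{eq:D-D}. The only difference is in packaging: the paper verifies quasiconformality metrically by showing that $F/|f'(x_0)|$ is uniformly bi-Lipschitz on Whitney cylinders $B^2(x_0,(1-|x_0|)/2)\times\R$, whereas you pass directly to the analytic definition and bound $\|DF\|^3/J_F$ pointwise---your observation that the factor $|z|<1-|x|$ exactly cancels the blow-up rate $|\nabla D|\lesssim |f'|/(1-|x|)$ is the infinitesimal version of the paper's two-point estimates.
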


\begin{proof}
Define first  a  double circular cone $\mathcal C$ with base $\B^2$,
\begin{equation}\label{eq:coneC}
\mathcal{C}= \{(x,z)\in\mathbb{B}^2 \times\R \colon |z|<1-|x|\}.
\end{equation}
Fix a  conformal map $f$ from $\B^2$ onto $\Omega$ and define a homeomorphism $F \colon \mathcal C \to \mathcal K (\Omega,t)$ between cones, associated to $\Omega$ and $f$, by
\begin{equation}\label{eq:F}
F(x,z) = (f(x),  \frac{\dist(f(x),\partial \Omega)}{1 - |x|}\, z).
\end{equation}

We claim that $F$ is $K_1$-quasiconformal for some absolute constant $K_1 > 1$. The theorem follows from the claim and the fact that the cone $\mathcal C$ is quasiconformally equivalent to $\B^3$.

To this end, take $x_0\in \mathbb B^2$ and  $(x_1,z_1)$, $(x_2,z_2)$ in $\mathcal C$ with $x_1,x_2\in  B^2(x_0,(1-|x_0|)/2)\subset  \B^2$ and $-1 <z_1, z_2<1$. Inequalities \eqref{eq:onequarter}, \eqref{eq:onequarter-A} and 
\eqref{eq:D-D} yield
\begin{align*}
|F(x_1,z_1) - F(x_2,z_2)| &\leq |f(x_1) - f(x_2)| + |D_{x_1}z_1 - D_{x_2}z_2|\\
&\leq A |f'(x_0)||x_1-x_2| +  |D_{x_1}-D_{x_2}||z_2|+D_{x_1}|z_1-z_2| \\
&\leq A |f'(x_0)| |x_1-x_2|+ \frac{3}{2} |D_{x_1}-D_{x_2}|(1-|x_0|)\\
                         & \hspace{1.375in}  + 2 A^2 |f'(x_0)||z_1-z_2|\\
&\leq  15 A^2 |f'(x_0)||(x_1,z_1)-(x_2,z_2)|.
\end{align*}
For a lower estimate, note that when $|z_1-z_2| \leq  60 A^4 |x_1-x_2|$,
\begin{align*}
|F(x_1,z_1) - F(x_2,z_2)| &\geq |f(x_1)-f(x_2)|\\
                          &\geq \frac{1}{A}|f'(x_0)||x_1-x_2|\\
                          &\geq \frac{1}{61 A^5}|f'(x_0)||(x_1,z_1)-(x_2,z_2)|.
\end{align*}
On the other hand, if $|z_1-z_2| > 60 A^4 |x_1-x_2|$ then
\begin{align*}
|F(x_1,z_1) - F(x_2,z_2)|
&\geq |D_{x_1}z_1 - D_{x_2}z_2|\\
&\geq  D_{x_2}|z_1-z_2|-|(D_{x_1}-D_{x_2})z_1|\\
&\geq \frac{1}{4}|f'(x_2)||z_1-z_2|-8 A^2 \,\frac{|f'(x_0)||x_1-x_2|}{1 - |x_0|} |z_1|\\
& \geq \frac{1}{4 A^2}|f'(x_0)||z_1-z_2|-12 A^2 \,|f'(x_0)||x_1-x_2|\\
&\geq \frac{1}{30 A^2}|f'(x_0)||(x_1,z_1)-(x_2,z_2)|.
\end{align*}
From these estimates it follows that the restrictions of  $\frac{F}{|f'(x_0)|}$ on $\mathcal C \cap (B^2(x_0, (1-|x_0|)/2)\times \R)$ are uniformly bi-Lipschitz for all $x_0\in  \B^2$. Hence $F$ is $K_1$-quasiconformal for some 
absolute $K_1 >1 $.
\end{proof}

\subsection{Higher dimensional quasiballs}
The procedure of constructing quasiballs in $\R^3$ extends to all dimensions.

\begin{thm}\label{thm:quasiballs_n}
Let $\Omega$ be a simply connected domain in $\R^2$, set $\mathcal K^3(\Omega, t)= \mathcal K(\Omega, t)$  the cone  defined in \eqref{eq:coneK}, and write
\[ \mathcal K^n(\Omega, t)= \{(x,z) \colon    x\in {\mathcal K^{n-1}(\Omega, t)},\, |z|< \,\dist(x, \partial {\mathcal K^{n-1}(\Omega, t)})\} \]
for $n\ge 4$. Then for any $n\ge 3$, $\mathcal K^n(\Omega, t)$ is quasiconformally homeomorphic to the unit ball $\B^n$.
\end{thm}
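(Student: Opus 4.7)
\emph{Plan.} I would proceed by induction on $n$, with the base case $n=3$ being Theorem~\ref{thm:quasiball-cone}. The strategy is to propagate the mechanism of that proof one dimension at a time, inductively building quasiconformal maps from a sequence of model cones onto the iterated cones.

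Define model cones inductively by $\mathcal{C}^3=\mathcal C$ from \eqref{eq:coneC}, and
\[ \mathcal{C}^n = \{(y,w)\in\mathcal{C}^{n-1}\times\R : |w|<\dist(y,\partial\mathcal{C}^{n-1})\} \]
for $n\ge 4$. Since the distance function to the boundary of a convex domain is concave, induction shows that each $\mathcal C^n$ is a bounded convex domain and is therefore bi-Lipschitz equivalent to $\B^n$ (for instance, via the radial map from an interior point). It therefore suffices to construct, for each $n\ge 3$, a quasiconformal homeomorphism $F_n\colon\mathcal{C}^n\to\mathcal{K}^n(\Omega,t)$.

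For the induction to close, I would carry a strengthened hypothesis beyond bare quasiconformality. Concretely, the inductive hypothesis $(\ast_n)$ is: there exist a function $\lambda_n\colon\mathcal{C}^n\to(0,\infty)$ and a constant $C_n\ge 1$ such that for every $y_0\in\mathcal{C}^n$, the restriction of $F_n$ to the Whitney ball $B(y_0,\dist(y_0,\partial\mathcal C^n)/2)$ is bi-Lipschitz with both constants comparable (up to $C_n$) to $\lambda_n(y_0)$, and moreover
\[ \dist(F_n(y_0),\partial\mathcal{K}^n(\Omega,t)) \simeq \dist(y_0,\partial\mathcal{C}^n)\cdot\lambda_n(y_0). \]
The base case is exactly the content of \eqref{eq:onequarter}--\eqref{eq:onequarter-A} combined with the explicit form of $F_3$ in \eqref{eq:F}, with $\lambda_3$ built from $|f'|$ (and an adjustment in the $z$-direction).

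For the inductive step from $n-1$ to $n$, following the model of \eqref{eq:F}, I would set
\[ F_n(y,w) = \Bigl(F_{n-1}(y),\,D_y\,w\Bigr),\qquad D_y=\frac{\dist(F_{n-1}(y),\partial\mathcal{K}^{n-1}(\Omega,t))}{\dist(y,\partial\mathcal{C}^{n-1})}, \]
which is a well-defined homeomorphism from $\mathcal{C}^n$ onto $\mathcal{K}^n(\Omega,t)$. To verify $(\ast_n)$ with $\lambda_n(y_0,w_0):=\lambda_{n-1}(y_0)$, I would reproduce the chain of bi-Lipschitz estimates from the proof of Theorem~\ref{thm:quasiball-cone}, now using $(\ast_{n-1})$ in place of the Koebe inequalities. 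In particular, the Lipschitz-type inequality for $D_y$ on a Whitney ball (analogous to \eqref{eq:D-D}) follows from the $1$-Lipschitz property of the distance function together with the second half of $(\ast_{n-1})$. Quasiconformality of $F_n$ is then immediate from the Whitney-ball bi-Lipschitz bound.

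The main obstacle will be geometric: relating Whitney balls of $\mathcal{C}^n$ to Whitney-like neighborhoods in $\mathcal{C}^{n-1}$. Given $(y_0,w_0)\in\mathcal{C}^n$, I must compare $\dist((y_0,w_0),\partial\mathcal{C}^n)$ with $\dist(y_0,\partial\mathcal{C}^{n-1})$ so that a Whitney ball around $(y_0,w_0)$ projects into a region where $(\ast_{n-1})$ applies to $F_{n-1}$. The delicate regimes are near the equator of $\mathcal{C}^n$ (where $w_0\approx 0$ while $y_0$ approaches $\partial\mathcal{C}^{n-1}$) and near the caps (where $|w_0|$ approaches $\dist(y_0,\partial\mathcal{C}^{n-1})$); both must be controlled to ensure that the Whitney-ball estimates propagate. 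Once these geometric comparisons are settled, the same sequence of bounds as in Theorem~\ref{thm:quasiball-cone} yields $(\ast_n)$, and in particular the quasiconformality of $F_n$ with constant $K_n$ depending only on $n$, completing the induction.
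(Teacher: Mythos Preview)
Your proposal is correct and follows essentially the same approach as the paper: define the model cones $\mathcal{C}^n$, build the maps $F_n$ inductively via the distance-ratio formula $F_n(y,w)=(F_{n-1}(y),D_y\,w)$, and carry the strengthened hypothesis that $F_n$ is uniformly bi-Lipschitz on Whitney balls with scale $\simeq |f'|$ and respects the boundary-distance ratio (the paper's (a)--(b), equivalently \eqref{eq:onequarter-A-F}--\eqref{eq:onequarter-F}). The geometric obstacle you flag is handled in the paper implicitly (and indeed $\dist((y_0,w_0),\partial\mathcal{C}^n)\le \dist(y_0,\partial\mathcal{C}^{n-1})$ makes the Whitney-ball projection immediate), and your convexity argument for $\mathcal{C}^n\simeq\B^n$ is a clean justification of what the paper simply asserts.
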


We give an outline of the proof.

Set $\mathcal{C}^3= \mathcal{C}$, and
\[\mathcal{C}^n= \{(x,z)\colon x \in \mathcal{C}^{n-1}, |z|<\dist(x, \partial {\mathcal C^{n-1}})\}\]
for $n\ge 4$. The quasiconformality of the map $F\colon \mathcal C^3 \to \mathcal K^3(\Omega,t)$ was deduced from the Lipschitz property of a distance function and the  Koebe estimates \eqref{eq:onequarter} and 
\eqref{eq:onequarter-A}. These estimates  are essentially equivalent to the following
\begin{enumerate}
\item[(a)] for any $x_0 \in \mathbb{B}^2$, the restriction $f|_{\mathbb B^2(x_0, (1-|x_0|)/2)}$ is $C(x_0)$-bi-Lipschitz,
\item[(b)] the bi-Lipschitz constant $C(x_0)$  is comparable to  $\frac{\dist(f(x_0),\partial \Omega)}{\dist(x_0, \partial \B^2)}$ with ratios bounded above and below by constants  independent of $x_0$.
\end{enumerate}

The extended map  $F \colon \mathcal C^3 \to \mathcal K^3 (\Omega,t)$ defined in  \eqref{eq:F} inherits properties (a) and (b)  from that of $f$. Indeed, the proof of Theorem \ref{thm:quasiball-cone} implies that there exists 
an absolute constant $A_1 >1$ such that for any $(x_0,z_0) \in \mathcal C^3$ and $(x,z), (x_1,z_1), (x_2,z_2)\in B^3((x_0,z_0), \frac{1}{2}\dist((x_0,z_0), \partial \mathcal C^3))\subset \mathcal C^3$
\begin{equation}\label{eq:onequarter-A-F}
\frac{1}{A_1}\frac{ |F(x_1,z_1) - F(x_2,z_2)|}{|(x_1,z_1) - (x_2,z_2)|} \leq |f'(x)| \leq A_1 \frac{ |F(x_1,z_1) - F(x_2,z_2)|}{ |(x_1,z_1) - (x_2,z_2)|.}
\end{equation}
Since quasiconformal mappings map Whitney-type cubes to Whitney-type sets (\cite[Theorem 11]{Geh3}, \cite[p.93]{Heinonen}), we can deduce from \eqref{eq:onequarter-A-F} that
\begin{equation}\label{eq:onequarter-F}
\frac{1}{A_2}\frac{ \dist(F(x_0,z_0),\partial  \mathcal K^3(\Omega,t)) }{\dist((x_0,z_0), \partial \mathcal C^3))} \leq |f'(x_0)| \leq A_2
\frac{ \dist(F(x_0,z_0),\partial  \mathcal K^3(\Omega,t)) }{\dist((x_0,z_0),  \partial \mathcal C^3))}
\end{equation}
for some $A_2>1$ depending only on the dilatation $K_0$ in Theorem \ref{thm:quasiball-cone}.

Estimates \eqref{eq:onequarter-A-F} and \eqref{eq:onequarter-F}  and the Lipschitz property of a distance function allow the induction steps to continue. As a consequence, the map
\[ G\colon (x,z,z') \to (F(x,z), \, \frac{\dist(F(x,z),\partial \mathcal K^3(\Omega, t))}{\dist((x,z),\partial \mathcal C^3)}\, z') \]
for $(x,z) \in \mathcal C^3$ and $|z'|< \dist((x,z),\partial \mathcal C^3) $, is  quasiconformal  from $\mathcal C^4$ onto $\mathcal K^4(\Omega, t)$, and it satisfies the local bi-Lipschitz properties (a) and (b) needed for 
the next step. In particular, the restrictions of $\frac{G}{|f'(x_0)|}$ on
\[B^4((x_0,z_0,z_0'), \frac{1}{2}\dist((x_0,z_0,z_0'), \partial \mathcal C^4))\]
are uniformly bi-Lipschitz for all $(x_0,z_0,z_0')\in \mathcal C^4$. Since $\mathcal C^n$ is bi-Lipschitz equivalent to $\mathbb B^n$, $\mathcal K^n(\Omega, t)$ is quasiconformal to  $\B^n$ for any $n\ge 3$.

\subsection{Slit domains}
A domain $D \subset \R^n$ is called a \emph{slit domain} if $\R^n\setminus \overline{D} \subset \R^{n-1}\times\{0\}$. A theorem of Gehring on quasiconformal maps on slit domains in $\R^3$ states as follows.

\begin{thm}[{\cite[Theorem 5]{Geh2}}]\label{thm:Gehring-slit}
Suppose that $\Omega$ is a planar Jordan domain. Then the slit domain  $\R^3\setminus \overline{\Omega}$ is quasiconformally homeomorphic to $\R^3\setminus \overline{\mathbb B^3}$ if and only if $\Omega$ is a quasidisc.
\end{thm}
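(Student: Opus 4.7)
I plan to prove the two implications by independent constructions. The forward direction---$\Omega$ a quasidisk implies the slit domain is QC equivalent to the ball exterior---I will assemble from a planar extension theorem plus explicit bi-Lipschitz surgery in $\R^3$. The converse I will extract from the boundary-extension theory of quasiconformal maps.

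\textbf{Forward direction.} Since $\partial\Omega$ is a quasicircle, there is a planar QC homeomorphism $f \colon \R^2 \to \R^2$ with $f(\overline{\B^2}) = \overline\Omega$. By the higher-dimensional Ahlfors--Beurling reflection extension, $f$ extends to a QC homeomorphism $\tilde f \colon \R^3 \to \R^3$ that preserves the plane $\R^2\times\{0\}$ setwise, restricts to $f$ there, and maps each closed half-space to itself. In particular $\tilde f$ sends $\R^3 \setminus \overline\Omega$ QC-homeomorphically onto the ``flat disk slit'' $\R^3 \setminus (\overline{\B^2}\times\{0\})$. It remains to identify this flat slit with $\R^3 \setminus \overline{\B^3}$ quasiconformally, and I plan to route through the double cone
\[
\overline{\mathcal{K}(\B^2,t)} = \{(x,z) \in \overline{\B^2}\times\R : |z|\leq 1-|x|\},
\]
which is a convex bipyramid in $\R^3$, so its complement is bi-Lipschitz equivalent to $\R^3\setminus\overline{\B^3}$. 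On the other side, the collapsing map
\[
\Phi(x,z) = \begin{cases}
(x,\, z - (1 - |x|)), & |x| \leq 1,\ z \geq 1 - |x|, \\
(x,\, z + (1 - |x|)), & |x| \leq 1,\ z \leq -(1 - |x|), \\
(x,\, z), & |x| \geq 1,
\end{cases}
\]
is bi-Lipschitz from $\R^3 \setminus \overline{\mathcal{K}(\B^2,t)}$ onto $\R^3 \setminus (\overline{\B^2}\times\{0\})$: the three pieces match continuously across $\partial\B^2 \times \{0\}$, and the Lipschitz bounds for $\Phi$ and $\Phi^{-1}$ follow from the fact that $x \mapsto 1 - |x|$ is $1$-Lipschitz on $\overline{\B^2}$. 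Composing these maps with $\tilde f$ yields the desired QC homeomorphism $\R^3 \setminus \overline{\B^3} \to \R^3 \setminus \overline\Omega$.

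\textbf{Converse direction.} Assume $h \colon \R^3 \setminus \overline\Omega \to \R^3 \setminus \overline{\B^3}$ is QC. The ball exterior is a uniform domain, and by V\"ais\"al\"a's boundary theory $h$ extends to a quasisymmetric homeomorphism between the prime-end compactifications of the two domains. The prime-end closure of the slit is a topological $2$-sphere built from two copies of $\overline\Omega$ glued along $\partial\Omega$, and the rim $\partial\Omega$ sits inside this sphere as a topological circle. The extension therefore restricts to a quasisymmetric embedding of $\partial\Omega$ onto a Jordan curve $\gamma$ on $\mathbb{S}^2$; the two topological disks bounded by $\gamma$ are QC images of the two faces of the slit, so $\gamma$ is a quasicircle on $\mathbb{S}^2$ in the sense that both complementary components are QC to disks. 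Pulling back by stereographic projection away from $\gamma$ and invoking the Beurling--Ahlfors theorem (recalled in the preliminaries) that a planar Jordan curve is a quasisymmetric circle iff it is a quasicircle, I conclude that $\partial\Omega$ is a quasicircle.

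\textbf{Main obstacle.} The forward direction is mostly assembly of standard ingredients; the hard part will be making the converse rigorous, specifically verifying that the prime-end extension of $h$ restricts to a well-defined quasisymmetric homeomorphism on the rim $\partial\Omega$ and that the image $\gamma$ really bounds two QC disks on $\mathbb{S}^2$. If the prime-end approach proves unwieldy, my backup plan is a modulus argument: construct in $\R^3 \setminus \overline\Omega$ curve families whose moduli quantitatively detect the three-point condition for $\partial\Omega$ (e.g.\ curves linking the two faces of the slit that must funnel around a narrow neck of $\partial\Omega$), transfer them through $h$ to the ball exterior, and bound the resulting moduli by universal constants, thereby forcing the two-point condition on $\partial\Omega$.
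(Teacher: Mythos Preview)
The paper does not prove this statement at all; it is quoted from Gehring \cite{Geh2} and used as a black box in Sections \ref{sec:proof_quasisphere_t} and \ref{sec:quasispheres}. So there is no in-paper argument to compare against, and your proposal should be read as an attempt to supply Gehring's proof.

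Your forward direction is correct and in fact assembles exactly the tools the paper already has on hand: the collapsing map $\Phi$ is the special case $h_1(x)=1-|x|=-h_2(x)$ of Lemma \ref{lem:slit}, and the extension of a planar QC map to $\R^3$ is the Tukia--V\"ais\"al\"a theorem invoked in the proof of Theorem \ref{thm:quasispheres_n}. The convex-body step is routine.

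Your converse, however, has a genuine gap, and it lies precisely where you suspect. The quasisymmetric boundary-extension results you want to invoke (V\"ais\"al\"a, Gehring--Martio, etc.) require the \emph{source} domain to be uniform, Loewner, or QED---not just the target. You know $\R^3\setminus\overline{\B^3}$ is uniform, but you do not know $\R^3\setminus\overline\Omega$ is, and in fact that uniformity is essentially equivalent to $\Omega$ being a quasidisk, which is what you are trying to prove. So the prime-end route is circular as written. There is a second circularity in the line ``the two topological disks bounded by $\gamma$ are QC images of the two faces of the slit, so $\gamma$ is a quasicircle'': the boundary extension, even if granted, is a quasisymmetry of metric $2$-spheres, not a $2$-dimensional QC map of the faces, and it tells you nothing directly about whether $\gamma$ satisfies the $2$-point condition unless you already know the faces (i.e.\ $\Omega$ in its internal metric) are quasisymmetric to disks. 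Your modulus backup is the right instinct and is much closer to Gehring's 1965 argument; alternatively, you could try to transfer the LLC property of $\mathbb S^2=\partial\B^3$ through $h^{-1}$ to the slit and then run a projection argument in the spirit of the paper's own proof of Remark \ref{rem:QStoQcircle}.
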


We state a simple observation on slit domains that is needed later.

\begin{lem}\label{lem:slit}
Let $D$ be a bounded domain in $\R^n$, and $h_1,\, h_2 \colon \R^n\to \R $ be $L$-Lipschitz functions satisfying $h_2\leq h_1$ in $D$ and $h_1=h_2 = 0$ in $\R^n\setminus D$. Then the domain
\[ \R^{n+1} \setminus \{ (x,z) \colon x \in D \text{, } h_2(x) \leq z \leq h_1(x) \}\]
is locally $(L+2)$-bi-Lipschitz to, therefore $K$-quasiconformal to, the slit domain $\R^{n+1} \setminus \overline D$, for some $K\geq 1$ depending only on $L$.
\end{lem}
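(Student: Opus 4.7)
The plan is to exhibit an explicit piecewise shearing map between the slit domain and the complement of the pillow, and then verify bi-Lipschitz bounds on each half separately. First I would define $\Phi\colon\R^{n+1}\setminus\overline{D}\to\R^{n+1}\setminus\overline{P}$, where $P=\{(x,z)\colon x\in D,\ h_2(x)\le z\le h_1(x)\}$, by
\[
\Phi(x,z)=\begin{cases}(x,\,z+h_1(x)) & z\ge 0,\\ (x,\,z+h_2(x)) & z\le 0.\end{cases}
\]
Since $h_1=h_2=0$ outside $D$, the two formulas agree at every $(x,0)$ with $x\notin\overline{D}$, so $\Phi$ is well-defined on the slit domain. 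The inverse, given piecewise by $\Psi(y,w)=(y,\,w-h_1(y))$ for $w\ge h_1(y)$ and $(y,\,w-h_2(y))$ for $w\le h_2(y)$, is a two-sided inverse, so $\Phi$ is a homeomorphism onto $\R^{n+1}\setminus\overline{P}$.

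Next I would estimate the bi-Lipschitz constant on each closed half-space. For $z_1,z_2\ge 0$ the triangle inequality in $\R^{n+1}$ together with the Lipschitz bound on $h_1$ gives
\[
|\Phi(x_1,z_1)-\Phi(x_2,z_2)|\le|(x_1,z_1)-(x_2,z_2)|+|h_1(x_1)-h_1(x_2)|\le(L+1)\,|(x_1,z_1)-(x_2,z_2)|,
\]
and the reverse inequality with the same constant follows by applying the identical computation to $\Psi$. An analogous estimate works on $\{z\le 0\}$ using $h_2$.

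To upgrade this to the \emph{local} bi-Lipschitz statement on the full slit domain, I would cover the domain by neighborhoods of three types: balls contained in $\{z>0\}$, balls in $\{z<0\}$, and balls around points $(x_0,0)$ with $x_0\notin\overline{D}$ that are small enough to be disjoint from $D\times\R$. In the first two cases the estimate above applies; in the third, $\Phi$ is literally the identity since $h_1,h_2$ both vanish on a neighborhood of $x_0$. Hence $\Phi$ is locally $(L+2)$-bi-Lipschitz on its whole domain. The quasiconformality conclusion then follows from the standard fact that a locally $C$-bi-Lipschitz homeomorphism between domains in a fixed-dimensional Euclidean space is $K$-quasiconformal with $K$ depending only on $C$.

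The main conceptual obstacle is that $\Phi$ cannot be made \emph{globally} bi-Lipschitz: near the center of the slit, the points $(x_0,+\e)$ and $(x_0,-\e)$ both lie in the slit domain but are mapped to points whose vertical separation is $h_1(x_0)-h_2(x_0)+2\e$, so the distance ratio blows up as $\e\to 0$. The qualifier ``locally'' is therefore essential, and the observation that makes the construction work is that the seam $\{z=0\}$ inside the slit domain is contained in $\R^n\setminus\overline{D}$, which is exactly where $h_1=h_2=0$ and the two pieces of $\Phi$ glue continuously (indeed, coincide with the identity).
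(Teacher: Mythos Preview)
Your proof is correct and follows essentially the same approach as the paper: the paper defines the identical shearing map $H(x,z)=(x,z+h_1(x))$ for $z>0$ and $(x,z+h_2(x))$ for $z<0$, checks the $(L+2)$-bi-Lipschitz bound on each half-space, and invokes the global homeomorphism property to conclude local bi-Lipschitzness. Your treatment is slightly more detailed---you give the explicit inverse, spell out the third neighborhood type near $(x_0,0)$ with $x_0\notin\overline{D}$ where $\Phi$ is the identity, and note the obstruction to a global bound---but the idea is the same.
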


\begin{proof}
Define $H \colon \R^{n+1} \setminus \overline{D} \rightarrow \R^{n+1} \setminus \{ (x,z)\colon x \in \overline{D} , h_2(x) \leq z \leq h_1(x) \}$ with
\begin{equation*}
H(x,z) =
\begin{cases}
 (x,z+h_1(x))  & \text{ if } x \in \overline{D} \text{  and  } z> 0, \\
 (x,z+h_2(x))  & \text{ if } x \in \overline{D} \text{  and  } z< 0, \\
 (x,z)         & \text{ if } x \not\in \overline{D}.
\end{cases}
\end{equation*}
It is straightforward to check that $H$ is $(L+2)$-bi-Lipschitz in each of the two half-spaces $\{(x,z)\colon x\in \R^n, z>0\}$ and $\{(x,z)\colon x\in \R^n, z<0\}$. Since $H$ is homeomorphic on $\R^{n+1} \setminus \overline{D}$, 
it is locally $(L+2)$-bi-Lipschitz and therefore $K$-quasiconformal for some $K$ depending only $L$.
\end{proof}

\subsection{Proof of Theorem \ref{thm:quasisphere_t}}\label{sec:proof_quasisphere_t}
For the sufficiency in  Theorem \ref{thm:quasisphere_t}, we apply Theorem \ref{thm:Gehring-Vaisala-characterization} of Gehring and \Vaisala. Note first from Theorem \ref{thm:quasiball-cone} that the bounded component of 
$\R^3\setminus \S(\Omega,t)$ is quasiconformal to  $\mathbb B^3$. On the other hand, by Lemma \ref{lem:slit}, the  unbounded component of $\R^3\setminus \S(\Omega,t)$ is quasiconformally homeomorphic to the slit domain 
$\R^3\setminus \overline{\Omega}$. Since $\Omega$ is a planar quasidisk, $\R^3\setminus \overline{\Omega}$ is quasiconformal to the exterior $\R^3\setminus \overline{\mathbb B^3}$ of the unit ball by Theorem \ref{thm:Gehring-slit}.
Therefore, $\S(\Omega,t)$ is a quasisphere by Theorem \ref{thm:Gehring-Vaisala-characterization}.

It remains to prove the necessity.

Suppose that the double-dome-like surface $\S(\Omega,t)$ is a $K$-quasisphere, hence a quasisymmetric sphere. Therefore  $\S(\Omega,t)$ is $\lambda-\text{LLC}$ for some $\lambda >1$ depending only on $K$. We claim that $\partial\Omega$ 
satisfies the $2$-point condition \eqref{eq:3pts} with $C= 32\lambda^2$.

Suppose the claim is false. Then there exist points $x_1,x_2,x_3,x_4 \in \partial\Omega$ in cyclic order such that,
\begin{equation}\label{eq:n=3eq}
|x_2-x_1|,|x_4-x_1| > 32\lambda^2|x_3-x_1|.
\end{equation}
Define
\[ d = \inf\{ \diam{\sigma} \colon \sigma \subset \overline \Omega \text{ is a continuum that contains }x_1\text{ and }x_3\}. \]
Since $x_1$ and $x_3$ are in the set $\S(\Omega,t) \cap B^3(x_1,2|x_1-x_3|)$, they are, by $\lambda-\text{LLC}_1$, contained in a continuum  $E$ in $ \S(\Omega,t)\cap B^3(x_1,2 \lambda |x_1-x_3|)$, and hence in the projection 
$\pi(E)$. Thus
\[d \leq \diam \pi(E)\leq \diam E \leq 4 \lambda |x_1-x_3|.\]
Next fix a continuum $\sigma$ in $\Omega$ that contains  $x_1$ and $x_3$ and has $\diam{\sigma} \leq 4\lambda|x_1-x_3|$. Since points $x_2$ and $x_4$ are in the set $ \S(\Omega,t) \setminus B^3(x_1,16\lambda^2|x_1-x_3|)$, they 
are, by the  $\lambda-\text{LLC}_2$ property, contained in a continuum  $F$ in $ \S(\Omega,t) \setminus  B^3(x_1,16\lambda|x_1-x_3|)$, and hence in the projection $\pi(F)$. Since $\pi(F)$  is a continuum in $\overline{\Omega}$, 
it intersects $\sigma$. Take a point $w\in F$ with $\pi(w) \in \sigma \cap \pi(F)$. Hence,
\begin{align*}
16\lambda|x_1-x_3|  < |w-x_1| &\leq |w-\pi(w)| + |\pi(w)-x_1| \\
                    &= \dist(\pi(w), \partial \Omega)+ |\pi(w)-x_1|\\
                    &\leq 2 |\pi(w)-x_1|\leq  8\lambda |x_1-x_3|,
\end{align*}
which is a contradiction. Therefore $\partial\Omega$ satisfies the $2$-point condition with $C = 32 \lambda^2$, hence it is a quasicircle.

This completes the proof of Theorem \ref{thm:quasisphere_t}.

\medskip

The proof in fact shows the following.

\begin{rem}\label{rem:QStoQcircle}
If the double-dome-like surface $\S(\Omega,t)$ is LLC then $\partial \Omega$ is a quasicircle.
\end{rem}

\subsection{Higher dimensional quasispheres}
The method of constructing quasispheres extends to all dimensions.

\begin{thm}\label{thm:quasispheres_n}
Suppose that $\Omega$ is a quasidisk in $\R^2$. Then $\partial(\mathcal K^n(\Omega, t))$ is a quasisphere in $\R^n$ for every $n\geq 3$.
\end{thm}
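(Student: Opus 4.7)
I would proceed by induction on $n\geq 3$, with base case $n=3$ being Theorem \ref{thm:quasisphere_t} (applicable since a quasidisk in $\R^2$ has a quasicircle boundary). For the inductive step, I would invoke the Gehring--V\"ais\"al\"a characterization (Theorem \ref{thm:Gehring-Vaisala-characterization}): it suffices to verify that the bounded and unbounded components of $\R^n\setminus \partial \mathcal K^n(\Omega,t)$ are quasiconformally equivalent to $\B^n$ and $\R^n\setminus\overline{\B^n}$ respectively. The bounded component is $\mathcal K^n(\Omega,t)$, which is quasiconformal to $\B^n$ directly by Theorem \ref{thm:quasiballs_n}, so the real work lies in the exterior.

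For the exterior, the surface $\partial \mathcal K^n(\Omega,t)$ is the union of the two graphs $z=\pm\dist(\cdot,\partial \mathcal K^{n-1}(\Omega,t))$ over the base $\mathcal K^{n-1}(\Omega,t)\subset\R^{n-1}\times\{0\}$, and the distance function is $1$-Lipschitz. Hence Lemma \ref{lem:slit} identifies the unbounded component with the slit domain $\R^n\setminus\overline{\mathcal K^{n-1}(\Omega,t)}$ up to quasiconformal equivalence. The induction hypothesis gives that $\partial \mathcal K^{n-1}(\Omega,t)$ is a quasisphere in $\R^{n-1}$, so by Theorem \ref{thm:Gehring-Vaisala-characterization} there is a quasiconformal self-homeomorphism $\phi$ of $\R^{n-1}$ with $\phi(\overline{\mathcal K^{n-1}(\Omega,t)})=\overline{\B^{n-1}}$. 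Since $\phi$ is automatically quasisymmetric on $\R^{n-1}$, the Tukia--V\"ais\"al\"a quasisymmetric extension theorem yields a quasiconformal self-homeomorphism $\Phi$ of $\R^n$ extending $\phi$ across the hyperplane $\R^{n-1}\times\{0\}$; note that the naive product $(x,z)\mapsto(\phi(x),z)$ is not in general quasiconformal unless $\phi$ is bi-Lipschitz. The map $\Phi$ carries $\R^n\setminus\overline{\mathcal K^{n-1}(\Omega,t)}$ to the complement $\R^n\setminus\overline{\B^{n-1}\times\{0\}}$ of a flat $(n-1)$-disk in $\R^n$.

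The final step -- and the main technical obstacle -- is to show that $\R^n\setminus\overline{\B^{n-1}\times\{0\}}$ is quasiconformal to $\R^n\setminus\overline{\B^n}$. For $n=3$ this is precisely Theorem \ref{thm:Gehring-slit} of Gehring. For $n\geq 4$, the naive ``fold'' $(x,z)\mapsto(x,z\pm\sqrt{1-|x|^2})$, which would displace the two sides of the disk onto the two hemispheres, fails to be quasiconformal at the edge $\{|x|=1,z=0\}$, where the removed set changes codimension from $1$ to $2$ and the derivative of $\sqrt{1-|x|^2}$ blows up. Instead, working in cylindrical coordinates $(r,\omega,z)\in[0,\infty)\times\mathbb S^{n-2}\times\R$, in which the flat disk and the round ball correspond respectively to the segment $[0,1]\times\{0\}$ and the half-disk $\{r^2+z^2\le 1,\,r\ge 0\}$ of the $(r,z)$ half-plane, I would construct a rotationally symmetric quasiconformal map from a carefully designed $2$-dimensional quasiconformal deformation of the $(r,z)$ half-plane that interpolates across the edge with bounded dilatation. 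This amounts to a higher-dimensional analog of Gehring's slit theorem in the particularly symmetric case of the flat round disk, and closes the induction.
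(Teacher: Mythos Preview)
Your overall strategy---induction on $n$, Theorem \ref{thm:quasiballs_n} for the interior, Lemma \ref{lem:slit} plus the Tukia--V\"ais\"al\"a extension for the exterior---is exactly the paper's. The only difference is your choice of reference model for $\mathcal K^{n-1}(\Omega,t)$: you map it to the flat ball $\overline{\B^{n-1}}$, whereas the paper maps it to the closed cone $\overline{\mathcal C^{n-1}}$. This single choice is what makes your ``main technical obstacle'' appear.

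In fact that obstacle dissolves with the very lemma you have already used. Apply Lemma \ref{lem:slit} once more, now with $D=\B^{n-1}$ and the $1$-Lipschitz heights $h_1(x)=-h_2(x)=1-|x|$: the slit domain $\R^n\setminus\overline{\B^{n-1}\times\{0\}}$ is then quasiconformal to $\R^n\setminus\overline{C}$, where $C=\{(x,z):x\in\overline{\B^{n-1}},\ |z|\le 1-|x|\}$ is a double cone over $\B^{n-1}$, hence bi-Lipschitz to $\B^n$. No cylindrical-coordinate construction is needed. The paper's route is the same idea packaged differently: by taking $\mathcal C^{n-1}$ as the target of the induction map $g$, Lemma \ref{lem:slit} applied on the $\mathcal C$-side turns $\R^n\setminus\overline{\mathcal C^{n-1}}$ directly into $\R^n\setminus\overline{\mathcal C^n}$, and $\mathcal C^n$ is bi-Lipschitz to $\B^n$. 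Either way the induction closes without any new analytic input beyond Lemma \ref{lem:slit} and the extension theorem.
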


\begin{proof}
Since $\mathcal K^n(\Omega, t)$ is quasiconformal to $\mathbb B^n$ by Theorem \ref{thm:quasiballs_n}, it suffices to check that $\R^n \setminus \overline{\mathcal K^n(\Omega, t)}$ is quasiconformal to 
$\R^n \setminus \overline{\B^n}$ in view of  Theorem \ref{thm:Gehring-Vaisala-characterization}.

Fix, by Theorem \ref{thm:quasisphere_t}, a quasiconformal $g\colon \R^3 \to \R^3$  that maps $\mathcal C^3  $ onto $\mathcal K^3(\Omega,t)$. Extend $g$, by a quasiconformal extension theorem of Tukia and {\Vaisala} 
\cite[Theorem 3.12]{TukVais}, to a quasiconformal homeomorphism $G\colon \R^4 \to \R^4$ with $G|_{\R^3\times \{0\}}=g$;  hence $\R^4 \setminus \overline{\mathcal C^3 } $ is quasiconformal to 
$\R^4\setminus \overline{\mathcal K^3(\Omega,t)}$. Furthermore, by Lemma \ref{lem:slit}, $\R^4\setminus \overline{\mathcal K^3(\Omega,t)}$ is quasiconformal to $\R^4\setminus \overline{\mathcal K^4(\Omega,t)}$; and 
$\R^4 \setminus \overline{\mathcal C^3 } $ is quasiconformal to $\R^4 \setminus \overline{\mathcal C^4 } $, hence quasiconformal to $\R^4 \setminus \overline{\mathbb B^4 } $. Therefore 
$\R^4 \setminus \overline{\mathcal K^4(\Omega, t)}$ is quasiconformal to $\R^4 \setminus \overline{\B^4}$. This procedure can be continued inductively to all dimensions. We conclude that 
$\R^n \setminus \overline{\mathcal K^n(\Omega, t)}$ is quasiconformal to $\R^n \setminus \overline{\B^n}$ for any $n\geq 3$.
\end{proof}

\begin{rem}
Theorem \ref{thm:quasispheres_n} remains true if we replace $\varphi(t) =t$ with gauges $\varphi \in \mathcal F$ that are bi-Lipschitz on $[0,\infty)$.

Indeed, suppose that $\Omega$ is a $K$-quasidisk and $\varphi_1, \varphi_2, \dots $ are $L_i$-bi-Lipschitz gauges in $ \mathcal{F}$. Set
\[ \mathcal{K}^3 = \{ (x,z) \colon x \in \Omega, \, |z| < \varphi_1(\dist(x,\partial\Omega))\},\]
and
\[ \mathcal{K}^{n+2} = \{ (x,z) \colon x \in \mathcal{K}^{n+1}, \, |z| < \varphi_n(\dist(x,\partial\mathcal{K}^{n+1}))\}\]
for $n \geq 2$. Then, for any $n\ge 1$, the surface $\partial\mathcal{K}^{n+2}$ is a  $K_n$-quasisphere in $\R^{n+2}$, with $K_n$ depending only on $n, K, L_1,\dots,L_n$.
\end{rem}

\section{Linear local connectivity and the level quasicircle property}\label{sec:LLC+LQC}
In this section we establish a relation between  the LLC property of $\S(\Omega,\varphi)$ and the LQC property on $\Omega$.

\begin{prop}\label{prop:LLCiffLQC}
Let $\Omega$ be a Jordan domain. Then the surface $\S(\Omega,\varphi)$ is $\text{LLC}$  for all $\varphi\in\mathcal{F}$ if and only if  $\Omega$ has the LQC property.
\end{prop}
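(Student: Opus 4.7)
I split the proof into the two implications.

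\emph{Sufficiency (LQC $\Rightarrow$ LLC on $\S(\Omega,\varphi)$ for every $\varphi\in\mathcal F$).}
Fix $\varphi\in\mathcal F$ and let $K,\e_0$ be the LQC constants. To join two points $P_i=(x_i,\pm\varphi(\e_i))\in\S(\Omega,\varphi)$, where $\e_i=\dist(x_i,\partial\Omega)$, by a continuum of controlled size I use a three-piece detour: two ``meridian'' arcs that slide each $P_i$ along the graph of $\pm\varphi\circ\dist(\cdot,\partial\Omega)$ over the segment $[x_i,x_i']$ (with $x_i'\in\partial\Omega$ realizing the distance) up or down to a common level $\gamma_{\e'}\times\{\pm\varphi(\e')\}$, and a sub-arc of that level curve joining the two landing points. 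The Lipschitz property of $\varphi$ on $[r,\infty)$ controls the length of a meridian in terms of its horizontal extent; the condition $\liminf_{t\to 0}\varphi(t)/t>0$ prevents the meridian from becoming too steep near $\partial\Omega$; and the $K$-quasicircle property of $\gamma_{\e'}$ delivers \eqref{eq:3pts} for the middle arc. Choosing $\e'$ comparable to $\min(\e_1,\e_2)$ (for $\text{LLC}_1$), or small enough that the connecting arc is forced through the opposite sheet on the far side of $\partial\Omega$ (for $\text{LLC}_2$), yields the required continua with constants depending only on $K$ and $\varphi$.

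\emph{Necessity (LLC for every $\varphi \Rightarrow$ LQC).} Apply the hypothesis with $\varphi(t)=t\in\mathcal F$: the surface $\S(\Omega,t)$ is $\lambda$-LLC, so Remark~\ref{rem:QStoQcircle} makes $\partial\Omega$ a quasicircle and Lemma~\ref{lem:proximity} is available. I first verify the LJC property at small scales by ruling out the obstructions of Lemmas~\ref{lem:boundary} and~\ref{lem:gammadelta}: collinear points $x_0\in\partial D$ and $x_1,x_2\in\partial\Omega$ with $|x_0-x_1|=|x_0-x_2|=\d$ force the surface to develop two ``flaps'' meeting along the vertical segment through $x_0$, and probing antipodal points on these flaps at scale $\simeq\d$ violates $\text{LLC}_1$. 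Once $\gamma_\e$ is a Jordan curve for $\e<\e_0$, I establish \eqref{eq:3pts} uniformly in $\e$ by contradiction: suppose there are $x_1,x_2,x_3,x_4\in\gamma_\e$ in cyclic order with $|x_2-x_1|,|x_4-x_1|$ much larger than $|x_1-x_3|$, as in the proof of Theorem~\ref{thm:quasisphere_t}. Lifting to $P_i=(x_i,\e)$ on the upper sheet of $\S(\Omega,t)$ and applying $\text{LLC}_1$ to $\{P_1,P_3\}$ and $\text{LLC}_2$ to $\{P_2,P_4\}$ produces continua $E,F\subset\S(\Omega,t)$; because the upper sheet is a $1$-Lipschitz graph over $\Omega$ and the contradiction is sought at scale $|x_1-x_3|\ll\e$, both continua remain on the upper sheet, so their planar projections $\pi(E),\pi(F)$ are continua of controlled diameter in $\overline\Omega$. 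A Jordan separation argument inside $\overline\Omega$ using the cyclic order of the $x_i$ on $\gamma_\e$ then mimics Section~\ref{sec:proof_quasisphere_t} and forces $\pi(E)\cap\pi(F)\neq\emptyset$ at a point whose distance from $x_1$ contradicts the $\text{LLC}_2$ lower bound.

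\emph{Main obstacle.} The decisive difficulty is the last step: unlike in the argument for Theorem~\ref{thm:quasisphere_t}, where the four test points live on $\partial\Omega$, here they lie on the interior curve $\gamma_\e$, so $\pi(E)$ and $\pi(F)$ can \emph{a priori} pass on either side of $\gamma_\e$ in $\overline\Omega$. Forcing their intersection requires combining the $1$-Lipschitz graph structure of the upper sheet (which confines the continua to a thin slab around height $\e$, hence their projections to an annular neighborhood of $\gamma_\e$) with the planar topology of $\gamma_\e$ as a Jordan curve; the remaining regime $|x_1-x_3|\gtrsim\e$ is handled separately by reducing to the already-known quasicircle property of $\partial\Omega$ via Lemma~\ref{lem:proximity}.
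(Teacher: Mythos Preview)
Your sufficiency sketch is essentially the paper's approach (Lemma~\ref{lem:LLC1}): meridian arcs down to a common level curve, then a subarc of that $K$-quasicircle, with separate bookkeeping for the $\text{LLC}_2$ case.

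The necessity, however, has a genuine gap. You propose to extract the LQC property using \emph{only} the gauge $\varphi(t)=t$. This cannot succeed, because the implication ``$\S(\Omega,t)$ is LLC $\Rightarrow$ $\Omega$ has LQC'' is simply false. By Theorem~\ref{thm:quasisphere_t}, $\S(\Omega,t)$ is a quasisphere (hence LLC) whenever $\partial\Omega$ is a quasicircle; but there exist quasidisks---indeed Lipschitz domains, see the reference to \cite[Remark~5.2]{VW}---that fail LQC. So no argument based solely on $\varphi(t)=t$ can give the conclusion you want, and the difficulty you flag as the ``main obstacle'' is not a technicality but a symptom of this: with slope-$1$ walls, continua in $\S(\Omega,t)$ really can climb over the level curve $\gamma_\e$ at cost comparable to the scale, so the separation argument breaks down.

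The paper's necessity argument (Lemmas~\ref{lem:LJC} and~\ref{lem:lqc1}) exploits precisely the freedom to choose $\varphi$ with $\varphi'(t)\to\infty$ as $t\to 0$. With such a gauge, the lifted surface near $\gamma_\e$ becomes a \emph{steep} wall: any continuum in $\S(\Omega,\varphi)$ of diameter $\lesssim r$ has projection confined to a thin annulus $|\dist(\cdot,\Gamma)-\e|\lesssim r/\lambda$, because the height variation along the continuum is at least $6\lambda$ times the variation of $\dist(\cdot,\Gamma)$. This confinement is what forces $\pi(E)$ to avoid the separating arc $\sigma$ in the LJC step, and to stay inside the annulus $\overline{\D}_{\e-2d}\setminus\D_\e$ in the LQC step, after which the planar topology gives the contradiction. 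Your ``thin slab'' heuristic is the right picture, but it is only valid for steep gauges, not for $\varphi(t)=t$; you need to invoke the hypothesis ``LLC for \emph{every} $\varphi\in\mathcal F$'' with a gauge like $t^\alpha$, $\alpha<1$, and then the argument goes through using only $\text{LLC}_1$.
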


We need a stronger form of $\text{LLC}_2$ for planar quasicircles for  Lemma \ref{lem:LLC1}; the   straightforward proof is left to the reader.

\begin{rem}\label{rem:strongllc}
Let  $\G\subset\R^2$ be a $K$-quasicircle. Then there exists $\lambda=\lambda(K) >1$ such that for any $x\in\R^2$, any $r>0$, and any two points $y_1, y_2 \in \G \setminus \overline{B}^2(x,r)$, there exists a subarc in 
$\G \setminus B^2(x,r/\lambda)$ that contains $y_1$ and $y_2$.
\end{rem}

From here onward, given $\Omega$ and $\varphi$ we set
\[\S(\Omega,\varphi)^+=   \S(\Omega,\varphi)\cap\R^3_+ \,\,\text{ and }\,\, \S(\Omega,\varphi)^-= \S(\Omega,\varphi)\cap\R^3_- ,\]
and for a given subset  $S$ of  $\overline{\Omega}$, let
\[ S^+=\{(x,\varphi(\dist(x,\partial \Omega))\colon x\in S \}\]
and
\[ S^-=\{(x,-\varphi(\dist(x,\partial \Omega))\colon x\in S \},\]
be the lifts of  $S$  to $\S(\Omega,\varphi)^+$ and $\S(\Omega,\varphi)^-$, respectively. For instance, $\D_{\e}^-$ is the part of surface in $\S(\Omega,\varphi)^-$  whose projection on $\R^2\times\{0\}$ is $\D_{\e}$, and 
$w^+=(w,\varphi(\dist(w,\partial \Omega)))$ when $w$ is a point in $\Omega$.

\subsection{Sufficient conditions for $\S(\Omega,\varphi)$ to be $\text{LLC}$}\label{sec:LLCsuf}

\begin{lem}\label{lem:LLC1}
Let $\Omega$ be a Jordan domain that has the level quasicircle property ($\text{LQC}$) and $\varphi$ be a homeomorphism in $\mathcal F$. In particular, there exist $K, L, M >1$ and $\e_0>0$ so that $\g_{\e}$ is a $K$-quasicircle 
for every $\e \in [0,\e_0]$ and that  $\varphi$ is $L$-Lipschitz in $[\e_0/3,\infty)$ and satisfies
\[ \varphi(t) >M t \text{  for all }  t \in [0,\diam \G]. \]
Then, $\S(\Omega,\varphi)$ is $\lambda-\text{LLC}$ for some $\lambda> 1$ depending only on  $K$, $L$, $M$, $\e_0$ and $\diam{\Omega}$.
\end{lem}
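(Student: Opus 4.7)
My approach verifies $\lambda$-$\text{LLC}_1$ and $\lambda$-$\text{LLC}_2$ separately, using a decomposition of $\Sigma := \Sigma(\Omega,\varphi)$ into a \emph{bulk region} $\{p \in \Sigma : \dist(\pi(p), \partial\Omega) \geq \epsilon_0/3\}$, where $\varphi$ is $L$-Lipschitz so that the graph map $\Phi_\pm(x) = (x, \pm \varphi(\dist(x,\partial\Omega)))$ is $(L+1)$-bi-Lipschitz, and a \emph{strip region} where $\dist(\pi(\cdot), \partial\Omega) \leq \epsilon_0$ so that every level set $\gamma_{\epsilon}$ is a $K$-quasicircle. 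The basic geometric observation is that the condition $\varphi(t) > Mt$ makes the strip region \emph{steep}: the lift to $\Sigma^+$ of a segment $[x, x']$, with $x' \in \partial \Omega$ realizing $\dist(x,\partial\Omega)$ (I call this a \emph{vertical arc}), has diameter at most $\dist(x,\partial\Omega) + \varphi(\dist(x,\partial\Omega)) < (1 + 1/M)\,\varphi(\dist(x,\partial\Omega))$, so that its size is controlled by the height alone. A second useful consequence of $\varphi(t) > Mt$ is the implication $|z| \leq s \Rightarrow \dist(\pi,\partial\Omega) < s/M$, which lets me compare horizontal positions via vertical information.

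For $\lambda$-$\text{LLC}_1$, I take $p_1, p_2 \in \Sigma \cap B^3(y_0, r)$, write $x_i = \pi(p_i)$ and $\epsilon_i = \dist(x_i, \partial\Omega)$, and split into cases. \emph{Same hemisphere}, say both in $\Sigma^+$: if $\max(\epsilon_1, \epsilon_2) \geq \epsilon_0/3$, I work in the bulk, where $\Phi_+$ is $(L+1)$-bi-Lipschitz and LLC reduces to joining $x_1, x_2$ inside $\overline\Omega$; the required planar continuum is produced by the $2$-point condition on $\partial\Omega = \gamma_0$ (itself a $K$-quasicircle by the LQC hypothesis at $\epsilon = 0$). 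If $\max(\epsilon_1, \epsilon_2) < \epsilon_0/3$, I drop each $p_i$ to a point $(x_i', 0) \in \partial\Omega \times \{0\}$ along its vertical arc of diameter $<(1+1/M)\varphi(\epsilon_i) \leq (1+1/M)r$, then connect $x_1', x_2'$ via a subarc of $\partial\Omega$ using the $2$-point condition. \emph{Opposite hemispheres}: then $|z_i| \leq r$ forces $\epsilon_i \leq r/M$ for both, and the same drop-and-connect construction applies. In every case the resulting continuum lives in a ball of radius $Cr$ around $y_0$, with $C$ depending only on $K, L, M$.

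For $\lambda$-$\text{LLC}_2$, given $p_1, p_2 \in \Sigma \setminus B^3(y_0, r)$, I use an analogous decomposition, replacing the $2$-point condition with the strong $\text{LLC}_2$ of quasicircles in Remark \ref{rem:strongllc}. When $r \lesssim \varphi(\epsilon_0/3)$ I can realize the connecting continuum as a subarc of $\partial\Omega \times \{0\}$ (or of $\gamma_\epsilon^\pm$ for an appropriate $\epsilon \in (0,\epsilon_0]$) together with two vertical arcs; the steepness $\varphi(t) > Mt$ guarantees that vertical arcs starting outside $B^3(y_0, r)$ do not re-enter $B^3(y_0, r/\lambda)$. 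When $r$ is comparable to $\diam \Omega$, the bulk region itself provides enough room to avoid $B^3(y_0, r/\lambda)$ for a suitable $\lambda$ depending on $\diam \Omega$ and $\epsilon_0$.

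The main obstacle I foresee is the bookkeeping near the overlap between bulk and strip, especially when one of $\epsilon_1, \epsilon_2$ is close to the threshold $\epsilon_0/3$, or when $\pi(y_0)$ itself lies in the strip and the planar $2$-point or strong $\text{LLC}_2$ estimate on $\gamma_\epsilon$ must be converted into the corresponding $\R^3$ estimate for the lift. A related technicality is that $\gamma_\epsilon$ need not bound the full $\epsilon$-inner parallel set (cf.\ Lemmas \ref{lem:appr}--\ref{lem:gammadelta}), so the right subarc of $\gamma_\epsilon^+$ on which to travel must be selected using Lemma \ref{lem:orientation}. Throughout, I will track constants to verify that $\lambda$ depends only on $K, L, M, \epsilon_0$ and $\diam \Omega$.
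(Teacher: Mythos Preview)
Your overall architecture---bulk versus strip, vertical arcs controlled by the steepness $\varphi(t)>Mt$, the $2$-point condition and Remark \ref{rem:strongllc} for the level curves---is exactly the paper's approach, and your $\text{LLC}_1$ argument in the opposite-hemisphere case is correct (the heights satisfy $z_1\ge 0\ge z_2$, so $\varphi(\e_i)=|z_i|\le |p_1-p_2|<2r$ and the drop to $\partial\Omega$ is legitimate).

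There is, however, a genuine gap in your same-hemisphere strip case. You claim that when $p_1,p_2\in\Sigma^+$ with $\e_i<\e_0/3$ the vertical arc from $p_i$ down to $\partial\Omega$ has diameter $\le (1+1/M)\varphi(\e_i)\le (1+1/M)r$. The first inequality is fine, but $\varphi(\e_i)\le r$ is simply false in general: take $y_0$ high up on the dome (say at height $\varphi(\e_{y_0})\gg r$); then $p_1,p_2\in B^3(y_0,r)$ still have $\varphi(\e_i)\approx\varphi(\e_{y_0})\gg r$, and dropping all the way to $\partial\Omega$ produces arcs of uncontrolled size. The point is that on the same side the heights $\varphi(\e_1),\varphi(\e_2)$ are individually large; only their \emph{difference} is bounded by $|p_1-p_2|$.

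The fix, and this is what the paper does, is to connect along an \emph{intermediate} level curve rather than along $\partial\Omega$. Assuming $\e_1\le\e_2$, choose $w\in\gamma_{\e_1}^+$ with $\pi(w)$ a nearest point on $\gamma_{\e_1}$ to $\pi(p_2)$; then $|\pi(p_2)-\pi(w)|=\e_2-\e_1\le|p_1-p_2|$ by monotonicity, and the lift of $[\pi(w),\pi(p_2)]$ has diameter $\le(\e_2-\e_1)+(\varphi(\e_2)-\varphi(\e_1))\le 2|p_1-p_2|$. Now $p_1$ and $w$ lie on the same quasicircle $\gamma_{\e_1}^+$ and are joined there by the $2$-point condition. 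Your bulk case also needs this: if $\e_1<\e_0/3\le\e_2$ the path you build must lie in a region where the lift is controlled, so you should first move from $p_2$ down to $\gamma_{\e_1}^+$ as above (your anticipated ``bookkeeping near the overlap'' is exactly this issue, and it is not merely bookkeeping).
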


\begin{proof}
Since all level curves $\{\gamma_{\epsilon}\}_{0\leq \epsilon <\e_0}$ are $K$-quasicircles, they satisfy the $2$-point condition \eqref{eq:3pts} with  a common $C > 1$.

\bigskip

\noindent{\bf  Part I.} The $\text{LLC}_1$ property.

We claim that there exists  $\lambda >1$ such that for any $y_1$ and $y_2$ in $\S(\Omega,\varphi)$, there exists a curve $\sigma$ in $\S(\Omega,\varphi)$ joining $y_1,y_2$ and having  $\diam{\sigma} \leq \lambda|y_1-y_2|$. This 
statement implies that $\S(\Omega,\varphi)$ is $(1+2\lambda)-\text{LLC}_1$. For the proof of the claim we consider three cases.

\emph{Case 1}. Suppose that both $y_1$ and $y_2$ are in $ \overline{\D_{\e_0}^+}$, or both are in $\overline{\D_{\e_0}^-}$. Since $\D_{\e_0}$ is a $K$-quasidisk, $\overline{\D_{\e_0}}$  contains an arc $\tau$ joining $\pi(y_1)$ 
to $\pi(y_2)$ whose diameter is at most $C  |\pi(y_1)-\pi(y_2)|$. Because $\varphi$ is $L$-Lipschitz on $[\e_0/3,\infty)$, $\tau$ lifts to a curve $\sigma=\tau^+$ on $\S(\Omega,\varphi)$ which connects $y_1$ to $y_2$ and has 
$\diam{\sigma}\leq (C+L) |\pi(y_1)-\pi(y_2)|$.

\emph{Case 2.} Suppose that both $y_1$ and $y_2$ are in the same half-space and at least one of them is not in $\overline{\D_{\e_0}^+}\cup\overline{\D_{\e_0}^-}$. Assume, for instance, that 
$y_1,y_2\in\R^3_+$, $y_1\in \S(\Omega,\varphi)^+\setminus \D_{\e_0}^+$ and  $\pi(y_1) \in \gamma_{\epsilon_1}$ and $\pi(y_2) \in \gamma_{\epsilon_2}$ with  $\epsilon_1 \leq \epsilon_2$. Other subcases can be treated analogously.

Take $w$ in $\S(\Omega,\varphi)^+$ such that $\pi(w)$ is  a point on $\gamma_{\epsilon_1}$ that is nearest to $\pi(y_2)$, thus $|\pi(y_2)-\pi(w)| = \epsilon_2 - \epsilon_1$.

Since $\varphi$ is increasing in $[\e_1,\e_2]$ we have that
\begin{align*}
|y_2-w| &\leq |\pi(y_2)-\pi(w)| + \varphi(\e_2) - \varphi(\e_1)\\
&\leq |\pi(y_2)-\pi(y_1)| + \varphi(\e_2) - \varphi(\e_1)\\
&\leq 2|y_1-y_2|
\end{align*}
and
\begin{align*}
|y_1-w| &= |\pi(y_1) - \pi(w)|\\
&\leq |\pi(y_1)-\pi(y_2)| + |\pi(y_2) - \pi(w)|\\
&\leq 2|\pi(y_1) - \pi(y_2)|\\
&\leq 2|y_1-y_2|.
\end{align*}
Since $\g_{\e_1}$ satisfies the $2$-point condition \eqref{eq:3pts}, there exists a subarc $\tau \subset \g_{\e_1}$ joining $\pi(w)$ to $\pi(y_1)$ and having
\[ \diam{\tau_1} \leq C|\pi(w) - \pi(y_1)| \leq C|w-y_1| \leq 2C|y_1-y_2|.\]
Let $\sigma_1$ be the lift of $\tau$ and $\sigma_2$ be the lift of the line segment $[\pi(w),\pi(y_2)]$ on $\S(\Omega,\varphi)^+$ respectively. Since $\varphi$ is increasing,
\[ \diam{\sigma_2} \leq \varphi(\e_2)-\varphi(\e_1) + \e_2 - \e_1 \leq 2|y_2-w| \leq 4|y_1-y_2|.\]
Then $\sigma = \sigma_1\cup\sigma_2$ is an arc in $\S(\Omega,\varphi)$ that connects $y_1$ to $y_2$ and has $\diam{\sigma} \leq (2C+4)|y_1-y_2|$.

\emph{Case 3.} Suppose that $y_1$ and $y_2$ are in two different half-spaces. Consider, for instance, that $y_1 \in\S(\Omega,\varphi)^+$ and $y_2 \in\S(\Omega,\varphi)^-$ and that $\pi(y_1) \in \g_{\e_1}$ and 
$\pi(y_2)\in \g_{\e_2}$. Then  $|y_1 - y_2| \geq \varphi(\epsilon_1) + \varphi(\epsilon_2)$.

Take $v_1,v_2 \in \G$ such that $|\pi(y_1) - v_1| = \epsilon_1$ and $|\pi(y_2) - v_2| = \epsilon_2$. Denote by $\sigma_1$ the lift of the segment $[\pi(y_1),v_1]$ on $\S(\Omega,\varphi)^+$ and by $\sigma_2$ the lift of the 
segment $[\pi(y_2),v_2]$ on $\S(\Omega,\varphi)^-$, respectively. The assumptions on $\varphi$ in the statement of the lemma yield
\[ \diam{\sigma_i} \leq |y_i-v_i| \leq \e_i+ \varphi(\e_i) \leq (1+1/M)\varphi(\e_1) \leq (1+1/M)|y_1-y_2|.\]
for $i=1$ and $2$. Let $\sigma_3$ be a subarc of $\G$ joining $v_1,v_2$ and having $\diam{\sigma_3} \leq C|v_1-v_2|$. It follows that
\[ \diam{\sigma_3} \leq C(|\pi(y_1)-\pi(y_2)| + \e_1 + \e_2) \leq C(2+1/M)|y_1-y_2|.\]

The path $\sigma = \sigma_1\cup\sigma_2\cup\sigma_3$ joins $y_1,y_2$ in $\S(\Omega,\varphi)$ and has
\[\diam{\sigma} \leq (2+2/M)(C+1) |y_1-y_2|.\]
This proves the claim and  Part I.

\bigskip

\noindent{\bf Part II.}The $\text{LLC}_2$ property.

We claim that there exists  $\lambda>1$ such that, for any  $x \in \S(\Omega,\varphi)$, $r>0$ and $y_1,y_2\in \S(\Omega,\varphi)\setminus B^3(x,r)$, there exists a continuum 
$E \subset \S(\Omega,\varphi)\setminus B^3(x,r/\lambda)$ that contains $y_1$ and $y_2$. Since $\S(\Omega,\varphi)\setminus B^3(x,r)$ is nonempty,
\[ r < \diam{\S(\Omega,\varphi)} \leq \diam{\G} + 2\varphi(\diam{\G}). \]
To verify the claim, it suffices to show the existence of $\lambda_0 >1$ when
\[ 0< r \leq r_0 = \min \{ \e_0/3 , \varphi(\e_0/3) \}. \]
For then, when $r> r_0$, points $y_1$ and $y_2$ are in $\S(\Omega,\varphi)\setminus B^3(x,r_0)$, and therefore are contained in a continuum $E$ in   $\S(\Omega,\varphi)\setminus B^3(x,r_0/\lambda_0)$ which in turn is contained 
in $ \S(\Omega,\varphi)\setminus B^3(x,r/(C_0 \lambda_0))$, where
\[C_0=\frac{\diam{\G} + 2\varphi(\diam{\G})}{r_0}.\]

We observe that  projection $\pi(z)$ of a point $z$ in $ \S(\Omega,\varphi)\setminus B^3(x,r)$ need not be in  $\overline{\Omega} \setminus B^2(\pi(x),r) $; this fact complicates the argument below.

Since $0< r<\e_0/3$, the following two cases suffice.

\emph{Case 1}. Suppose that $ \S(\Omega,\varphi) \cap B^3(x,r)   \subset \overline{\D_{\e_0/3}^+} \cup \overline{\D_{\e_0/3}^-}$. Because $\varphi$ is increasing and $0< r \leq \varphi(\e_0/3)$, $B^3(x,r)$ intersects only one of 
$\overline{\D_{\e_0/3}^+}$ and $\overline{\D_{\e_0/3}^-}$; assume that $\S(\Omega,\varphi) \cap B^3(x,r) \subset \overline{\D_{\e_0/3}^+}$. Since $\varphi$ is $L$-Lipschitz in $[\e_0/3,\infty)$, $ \S(\Omega,\varphi)\cap B^3(x,r)$ 
contains lift
\[G=\{(w, \varphi(\dist(w,\G)))\colon y\in B^2(\pi(x),c(L)r)\}\]
of the disk $B^2(\pi(x),c(L)r) $ on $\overline{\D_{\e_0/3}^+}$, for some $0<c(L)<1$ independent of the point $x$. Then $ \S(\Omega,\varphi) \setminus G$ is a continuum in $\S(\Omega,\varphi)\setminus B^3(x,c(L)r)$ that contains 
$y_1,y_2$.

\emph{Case 2}. Suppose that $B^3(x,r) \cap (\D_{\e_0}^+ \cup \D_{\e_0}^-) = \emptyset$. Given $y_1,y_2\in \S(\Omega,\varphi)\setminus B^3(x,r)$, we  consider the following subcases.

(i)\, Both $y_1,y_2 \in \D_{\e_0}^+$ or both $y_1,y_2 \in \D_{\e_0}^-$. There is nothing to prove because either $\D_{\e_0}^+$ or $\D_{\e_0}^-$ is a continuum exterior to $ B^3(x,r)$.

(ii)\, Both $y_1,y_2 \in \S(\Omega,\varphi) \setminus (\D_{\e_0}^+\cup \D_{\e_0}^-)$.  We assume further that $y_1\in \S(\Omega,\varphi)^+ \setminus \D_{\e_0}^+$ and $y_2\in \S(\Omega,\varphi)^- \setminus \D_{\e_0}^-$; other 
possibilities can be treated similarly. Assume also that $\pi(y_1)$ is on  $\g_{\e_1}$ and $\pi(y_2)$ is on $\g_{\e_2}$ for some $\e_1,\e_2 \in [0,\e_0]$.

Fix as we may a half-line in $\R^2$ starting at a point $x_0 \in \D_{\e_0}$ that does not intersect  $B^2(\pi(x),r)$. Let $v$ be the point on $l\cap \G$ closest to $x_0$ and $v_i$ be the point on $l \cap \g_{\e_i}$ closest to 
$x_0$ for $i= 1$ or $2$. Note that  $v,v_1,v_2$ are contained in a line segment entirely in $\overline \Omega$. Denote by $w_1 = (v_1,\varphi(\e_1))$ and $w_2 = (v_2,-\varphi(\e_2))$ the lifts of $v_1$ and $v_2$ on 
$\S(\Omega,\varphi)^+$  and $ \S(\Omega,\varphi)^-$, respectively.

Let $H$ be the hyperplane $\R^2 \times \{\varphi(\e_1)\}$ in $\R^3$. Then $\tau = \S(\Omega,\varphi)\cap H$, which is the lift $\g_{\e_1}^+$, is a $K$-quasicircle on $H$ that contains $y_1$ and $w_1$. Assume for a moment that 
$B^3(x,r)\cap H$ is nonempty, so it is a disk $B^2(\pi(x), \rho)\times \{\varphi(\e_1)\}$ with $0< \rho \leq r$. By Remark \ref{rem:strongllc}, there exists $\lambda_1>1$ depending only on $K$ and a subarc $\sigma_1$ of 
$\tau \setminus (B^2(x_1, \rho/\lambda_1)\times \{\varphi(\e_1)\})$ that connects $y_1$ to $w_1$. By elementary geometry, the curve $\sigma_1$ is in fact contained in  $\S(\Omega,\varphi)\setminus B^3(x,r/\lambda_1)$. When 
$B^3(x,r)\cap H$ is empty, a curve  $\sigma_1$ with the required properties exists trivially. Similarly, $y_2,w_2$ can be joined by an arc  $\sigma_2 \subset \S(\Omega,\varphi)\setminus B^3(x,r/\lambda_1)$ on the plane 
$\R^2 \times \{-\varphi(\e_2)\}$. Finally the union $\sigma_3 = {[v,v_1]}^+ \cup {[v,v_2]}^- $ of the lifts, of segments $[v,v_1]$ and $[v,v_2]$, connects $w_1$ to $v$ then to $w_2$ in $\S(\Omega,\varphi)$. Then, $y_1,y_2$ can 
be joined in $\S(\Omega,\varphi) \setminus B^3(x,r/\lambda_2)$ by the curve $\sigma = \sigma_1\cup\sigma_2\cup \sigma_3$.

(iii)\, $y_1\in \S(\Omega,\varphi) \setminus (\D_{\e_0}^+\cup \D_{\e_0}^-)$ and $y_2 \in \D_{\e_0}^+ \cup \D_{\e_0}^-$. We consider  only  the case when  $y_1\in \S(\Omega,\varphi)^+ \setminus \D_{\e_0}^+$ and 
$y_2 \in \D_{\e_0}^-$. Proceed as in (ii) to obtain a  half-line $l$, points $x_0$, $v, v_1$ and  $w_1$, and a curve $\sigma_1$ connecting $y_1$ to $w_1$. We next take $v_0$ to be the point on $l \cap \g_{\e_0}$ closest to $x_0$ 
and $w_0=(v_0, -\varphi(\e_0))$ be its lift on $\S_(\Omega,\varphi)^-$. Join $y_2$ to $w_0$ by any curve $\sigma_2'$ contained in $\overline {\D_{\e_0}^-}$. Then proceed as in subcase (ii) to define a curve $\sigma_3'$ in 
$\S(\Omega,\varphi)$ connecting $w_1$ to $v$ then to $w_0$ and having projection $\pi(\sigma_3')$ in $ l$. The curve $\sigma_1\cup\sigma_2'\cup \sigma_3'$ has the desired properties.

(iv)\, Either $y_1\in \D_{\e_0}^+$ and $y_2 \in \D_{\e_0}^-$, or $y_1\in \D_{\e_0}^-$ and $y_2 \in \D_{\e_0}^+$. We assume the former and fix a half-line $l \subset \R^2$, points $v_0$ $w_0$ and a curve $\sigma_2'$ as in (iii).
Let $u_0=(u_0, \varphi(\e_0))$ be the lift of $v_0$ on $\S(\Omega,\varphi)^+$, and let  $\sigma_1'$ be any curve in $\overline {\D_{\e_0}^+}$ joining $y_1$ to $u_0$. Finally as in the previous cases, choose a curve 
$\sigma_3''  \subset \S(\Omega,\varphi)$  joining $w_0$ to $v$ then to  $u_0$ and having projection $\pi(\sigma_3'')$ in $ l$. The curve $\sigma_1'\cup\sigma_2'\cup \sigma_3''$ has the properties required.
\end{proof}

\subsection{Necessary conditions for $\S(\Omega,\varphi)$ to be  $\text{LLC}$}\label{sec:LLCnec}
We prove the necessity in Proposition \ref{prop:LLCiffLQC}.

\begin{lem}\label{lem:LLCtoLQC}
Suppose that $\Omega$ is a Jordan domain and that $\S(\Omega,\varphi)$ is $\text{LLC}$ for every $\varphi \in \mathcal{F}$. Then $\Omega$ has the LQC property.
\end{lem}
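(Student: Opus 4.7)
The plan is to prove the contrapositive. Suppose $\Omega$ does not have the LQC property; I will construct $\varphi\in\mathcal F$ for which $\S(\Omega,\varphi)$ fails to be LLC. Applying the hypothesis first with $\varphi(t)=t\in\mathcal F$ and invoking Remark~\ref{rem:QStoQcircle} already gives that $\partial\Omega$ is a quasicircle, so any failure of LQC must come from the interior level curves. Accordingly there is a sequence $\e_n\to 0$ along which either \emph{(A)} $\g_{\e_n}$ fails to be a Jordan curve, or \emph{(B)} $\g_{\e_n}$ is a Jordan curve whose 2-point constants blow up as $n\to\infty$.

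In Case~(A), the structural Lemmas~\ref{lem:boundary} and~\ref{lem:gammadelta} supply, for each such $n$, a scale $\d_n\le\e_n$, a component $D_n$ of $\D_{\d_n}$, and collinear points $x_0^{(n)}\in\partial D_n$ and $x_1^{(n)},x_2^{(n)}\in\partial\Omega$ with $|x_0^{(n)}-x_i^{(n)}|=\d_n$. This is a bottleneck: two distinct branches of $\partial\Omega$ come within $2\d_n$ of $x_0^{(n)}$, which itself lies on the medial axis of $\Omega$. In $\S(\Omega,\varphi)$ with $\varphi(t)=t$, I choose two test points straddling the pinch on the two sheets $\S(\Omega,\varphi)^\pm$ — for instance points in $D_n^\pm$ close to the lifts $(x_0^{(n)},\pm\d_n)$. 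These are close in $\R^3$ but can only be connected by a continuum in $\S(\Omega,\varphi)$ that runs around the pinch via $\partial\Omega$, forcing a diameter bounded below by a positive constant depending only on the local geometry; letting $n\to\infty$ drives $\d_n\to 0$ and blows up the LLC$_1$ ratio.

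In Case~(B), select cyclic points $x_1^{(n)},\ldots,x_4^{(n)}\in\g_{\e_n}$ with $\min\{|x_2^{(n)}-x_1^{(n)}|,|x_4^{(n)}-x_1^{(n)}|\}>c_n\d_n$, where $\d_n:=|x_1^{(n)}-x_3^{(n)}|$ and $c_n\to\infty$ (so $\d_n\to 0$). After passing to a subsequence, build $\varphi\in\mathcal F$ piecewise-linearly so that $\varphi(\e_n)/\d_n\to\infty$, while preserving $\liminf_{t\to 0}\varphi(t)/t>0$ and Lipschitz continuity on each $[r,\infty)$. Setting $x_i^{(n)+}=(x_i^{(n)},\varphi(\e_n))$, I mimic the proof of Theorem~\ref{thm:quasisphere_t}: by $\lambda$-LLC$_1$ there is a continuum $E_n$ joining $x_1^{(n)+}$ and $x_3^{(n)+}$ with $\diam E_n\le 2\lambda\d_n\ll\varphi(\e_n)$, hence $E_n\subset\S(\Omega,\varphi)^+$, and $\pi(E_n)\subset\overline\Omega$ is a continuum of diameter $\le 2\lambda\d_n$ confined to distance levels close to $\e_n$; by $\lambda$-LLC$_2$ there is a continuum $F_n$ joining $x_2^{(n)+}, x_4^{(n)+}$ avoiding $B^3(x_1^{(n)+}, c_n\d_n/\lambda)$. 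Because $x_1^{(n)},x_3^{(n)}$ separate $x_2^{(n)},x_4^{(n)}$ on the Jordan curve $\g_{\e_n}\subset\overline\Omega$, one has $\pi(E_n)\cap\pi(F_n)\ne\emptyset$; at an intersection point $p$, the upper lift $p^+$ lies in $E_n$, so $|p^+-x_1^{(n)+}|\lesssim\lambda\d_n$ by Lipschitz continuity of $\varphi$ on $[\e_n/2,\infty)$, and showing that some such $p^+$ also lies on $F_n$ contradicts the LLC$_2$ lower bound $c_n\d_n/\lambda$.

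The hardest step is guaranteeing in Case~(B) that a point of $\pi(E_n)\cap\pi(F_n)$ is realized on $F_n$ by its \emph{upper} lift rather than the lower one (for which the distance to $x_1^{(n)+}$ is automatically at least $\varphi(\e_n)$ and yields no contradiction). Since $F_n$ may cross into $\S(\Omega,\varphi)^-$ through $\partial\Omega$, one exploits the scale separation $\varphi(\e_n)\gg\d_n$ built into $\varphi$ together with the LLC$_2$-avoidance ball to show that any crossing of $\partial\Omega$ by $F_n$ occurs at horizontal distance $\gtrsim c_n\d_n/\lambda$ from $x_1^{(n)}$, which dwarfs $\diam\pi(E_n)$; hence the subarc of $F_n$ whose projection reaches $\pi(E_n)$ cannot yet have crossed into $\S(\Omega,\varphi)^-$. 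Balancing the scales $\e_n,\d_n,\varphi(\e_n),c_n,\lambda$ through the construction of $\varphi$ and the choice of subsequence completes the contrapositive, proving that $\Omega$ must have the LQC property.
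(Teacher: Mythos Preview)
Your Case~(A) argument fails. With $\varphi(t)=t$, the two test points near $(x_0^{(n)},\pm\d_n)$ can be joined on $\S(\Omega,t)$ by concatenating the lifts of the segment $[x_0^{(n)},x_1^{(n)}]$ to $\S^+$ and to $\S^-$; since $x_1^{(n)}\in\partial\Omega$ and $|x_0^{(n)}-x_1^{(n)}|=\d_n$, this path has diameter $\lesssim\d_n$, comparable to the distance $2\d_n$ between the test points. There is no ``positive constant depending only on the local geometry'' bounding the connecting diameter from below --- the bottleneck itself supplies the shortcut. The paper handles this step (Lemma~\ref{lem:LJC}) by fixing any $\varphi\in\mathcal F$ with $\varphi'(t)\to\infty$ as $t\to 0$ and placing both test points on the \emph{same} sheet $\S^+$, on opposite sides of an arc $\sigma\subset D\cup\{x_0\}$. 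The steep derivative makes the lifted ``wall'' over $\sigma$ tall relative to its base, so any short continuum $E$ joining the test points has $\pi(E)$ confined to a thin distance band that misses $\sigma$, a topological contradiction.

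Your Case~(B) also has gaps. Requiring $\varphi(\e_n)/\d_n\to\infty$ says nothing about how $\varphi$ varies \emph{near} $\e_n$, so you cannot conclude that $\pi(E_n)$ is confined to distance levels close to $\e_n$; what is needed is a lower bound on $\varphi'$ on an interval containing $\e_n$. Moreover the detour through $\text{LLC}_2$ and your ``hardest step'' are both unnecessary and, as written, not justified: $F_n$ is a continuum, not a path, so speaking of which ``subarc'' has or has not crossed into $\S^-$ is ill-posed. The paper's route is much simpler and direct: fix once and for all a single $\varphi\in\mathcal F$ with $\varphi'(t)\to\infty$ and use only $\lambda$-$\text{LLC}_1$ of $\S(\Omega,\varphi)$. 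In Lemma~\ref{lem:lqc1}, after using that $\G$ is a quasicircle to get $d:=|x_1-x_3|\ll\e$, the steep $\varphi$ traps $\pi(E)$ in the thin annulus $\overline{\D}_{\e-2d}\setminus\D_\e$; this annular continuum joining $z_1,z_3$ must then cross one of the radial segments $[x_2,p_2]$, $[x_4,p_4]$, forcing $\diam\pi(E)\gtrsim|x_1-x_2|$ and contradicting $E\subset\lambda B$. No $\text{LLC}_2$, no bespoke $\varphi$ tuned to the sequence $\e_n$, and no sheet-crossing analysis is needed.
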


We show first in Lemma \ref{lem:LJC} that if $\S(\Omega,\varphi)$ is $\lambda-\text{LLC}_1$ for a gauge $\varphi$ having rapid growth near $0$, then $\g_{\e}$ has the level Jordan curve property. We show next in Lemma 
\ref{lem:lqc1} that if $\G$ is a quasicircle and $\S(\Omega,\varphi)$ is $\lambda-\text{LLC}_1$ for a gauge $\varphi$ having rapid growth near $0$, then $\Omega$ satisfies the LQC property. Since $\S(\Omega,t)$ is LLC, $\G$ 
must be a quasicircle  by Remark \ref{rem:QStoQcircle}. This completes the proof of Lemma \ref{lem:LLCtoLQC}.

\begin{lem}\label{lem:LJC}
Let $\Omega$ be a Jordan domain and  $\varphi$ be a function in $\mathcal{F}$ whose almost everywhere derivative $\varphi'(t)\to \infty$ as $t\to 0$. Suppose that $\S(\Omega,\varphi)$ is $\lambda-\text{LLC}_1$. Then there exists 
$\epsilon_0 > 0$ depending only on $\lambda,\varphi$ such that the set $\g_{\e}$ is Jordan curve for every $0\leq \epsilon \leq \epsilon_0$.
\end{lem}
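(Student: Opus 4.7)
The plan is to argue by contradiction: if $\g_\e$ fails to be a Jordan curve along a sequence $\e_n \searrow 0$, I derive a violation of the $\l$-$\text{LLC}_1$ hypothesis. By Lemma~\ref{lem:appr}, every connected component of $\D_{\e_n}$ is a Jordan domain, so any failure of $\g_{\e_n}$ to be a single Jordan curve must come from one of the two structural obstructions covered by Lemmas~\ref{lem:boundary} and~\ref{lem:gammadelta}: either $\g_{\e_n} \cup \D_{\e_n}$ is disconnected, or $\overline{\D_{\e_n}} \subsetneq \g_{\e_n} \cup \D_{\e_n}$. Invoking the appropriate lemma for each $n$ yields a scale $\d_n \in (0, \e_n]$ (so $\d_n \to 0$), a component $D_n$ of $\D_{\d_n}$, and collinear points $x_0^n \in \partial D_n$ and $x_1^n, x_2^n \in \G$ with $|x_0^n - x_1^n| = |x_0^n - x_2^n| = \d_n$. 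The monotonicity and non-crossing properties give $[x_1^n, x_2^n] \subset \overline\Omega$ and $B^2(x_0^n, \d_n) \subset \Omega \setminus \G$.

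Since $y_1^n := x_1^n$ and $y_2^n := x_2^n$ sit on $\G \subset \S(\Omega,\varphi)$ at height zero with $|y_1^n - y_2^n| = 2\d_n$, the $\l$-$\text{LLC}_1$ hypothesis yields a continuum $E_n \subset \S \cap \overline{B}^3(x_1^n, 2\l\d_n)$ containing both points. Every $(x,z) \in E_n$ has $|z| \le 2\l\d_n$, so $\varphi(\dist(x,\G)) \le 2\l\d_n$, i.e.\ $\dist(x,\G) \le \rho_n := \varphi^{-1}(2\l\d_n)$. The assumption $\varphi'(t) \to \infty$ as $t \to 0^+$ forces $\varphi(t)/t \to \infty$, hence $\rho_n/\d_n \to 0$. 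Therefore, for all large $n$, $\pi(E_n)$ is a continuum in $\overline\Omega$ joining $x_1^n$ and $x_2^n$, contained in $B^2(x_1^n, 2\l\d_n) \cap \{x\in\overline\Omega:\dist(x,\G) \le \rho_n\}$ with $\rho_n \ll \d_n$.

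The central portion $[x_1^n, x_2^n] \cap \overline{B}^2(x_0^n, \d_n/2)$ of the segment stays at distance $\ge \d_n/2$ from $\G$ (by the triangle inequality applied to $\dist(x_0^n,\G) = \d_n$), so once $\rho_n < \d_n/2$ this central portion is excluded from the $\rho_n$-collar. Thus $\pi(E_n)$ must detour around $\overline{B}^2(x_0^n, \d_n/2)$ inside the thin collar and within $B^2(x_1^n, 2\l\d_n)$. To finish I would show that, in the pinch configurations supplied by Lemmas~\ref{lem:boundary} and~\ref{lem:gammadelta}, both arcs of $\G$ connecting $x_1^n$ to $x_2^n$ exit the disk $B^2(x_1^n, 2\l\d_n)$. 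Combined with $\rho_n = o(\d_n)$, this forces the components of the $\rho_n$-collar in $B^2(x_1^n, 2\l\d_n)$ meeting $x_1^n$ and $x_2^n$ to be disjoint, contradicting the existence of the connected set $\pi(E_n)$.

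The delicate step is the last one. It is precisely what distinguishes true LJC failures from harmless cusp-type pinches, where $\g_{\d_n}$ remains Jordan with a mere cusp at $x_0^n$ and $\G$ already contains a short arc bridging $x_1^n$ to $x_2^n$ (think of a small inward semicircular bump of $\G$, for which Lemma~\ref{lem:boundary}/\ref{lem:gammadelta} in fact fail to apply since $\overline{\D_{\d_n}} = \g_{\d_n} \cup \D_{\d_n}$ and the union is connected). For Lemma~\ref{lem:gammadelta} the disconnection of $\D_{\e_n}$ should force substantial regions of depth $\gtrsim \d_n$ on both sides of the line through $x_0^n, x_1^n, x_2^n$, so each arc of $\G$ must enclose such a region; since $\Omega$ has fixed diameter while $\d_n \to 0$, the diameter of each arc must eventually exceed $2\l\d_n$. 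The isolated-level-point case of Lemma~\ref{lem:boundary} should admit an analogous but separate argument using the local-maximum structure of $\dist(\cdot, \G)$ at $x_0^n$.
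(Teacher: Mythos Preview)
Your reduction to the collinear configuration via Lemmas~\ref{lem:boundary} and~\ref{lem:gammadelta} is correct and matches the paper, but the pair of test points you feed into $\text{LLC}_1$ is wrong, and the step you yourself flag as ``delicate'' is in fact unfixable along the lines you propose. You take $x_1^n, x_2^n \in \G$ and need both arcs of $\G$ joining them to exit $B^2(x_1^n, 2\l\d_n)$. Consider a Jordan domain carrying a sequence of ``lollipops'' at scales $\d_n \to 0$ (a thin stem of width $2\d_n$ capped by a disk of radius $2\d_n$). At the stem--head junction the lemmas produce the triple $x_0^n, x_1^n, x_2^n$, and the arc of $\G$ running around the head has diameter $\simeq 4\d_n$; for any $\l \ge 2$ it stays inside $B^2(x_1^n, 2\l\d_n)$, and that arc itself, sitting at height zero, is a perfectly good continuum $E_n \subset \S(\Omega,\varphi) \cap B^3(x_1^n, 2\l\d_n)$ joining $x_1^n$ to $x_2^n$. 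So no contradiction arises from your pair. Your final paragraph only yields arc diameters $\gtrsim \d_n$ with an uncontrolled implicit constant relative to $\l$, and the remark that ``$\Omega$ has fixed diameter while $\d_n \to 0$'' is a non-sequitur because the arcs themselves vary with $n$.

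The paper avoids this by choosing test points at \emph{positive} height and working at a scale $r \ll \d$. It fixes an arc $\sigma$ from $x_0$ into the component $D \subset \D_\d$, picks $y_0 \in \sigma$ with $r = |y_0 - x_0| < \d/(16\l)$, and finds $z_1 \in [y_0, x_1]$, $z_2 \in [y_0, x_2]$ at a common level $\rho = \d - r/(8\l)$. The lifts $z_1^+, z_2^+$ sit at the same height $\varphi(\rho)$ with $|z_1^+ - z_2^+| \lesssim r/\l$; an $\text{LLC}_1$ continuum $E$ joining them in $B^3(z_1^+, 3r/8)$ has $\pi(E) \subset B^2(y_0, r)$, and the steepness of $\varphi$ forces $\dist(\pi(E), \G) < \d$, so $\pi(E)$ misses $\sigma \subset \overline{\D_\d}$. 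But $z_1, z_2$ lie in different components of $B^2(y_0, r) \setminus \sigma$, a contradiction. The separating object is the interior arc $\sigma$ and the confining ball has radius $r \ll \d$; no information about the global arc structure of $\G$, and no comparison between $\l$ and any geometric constant of $\Omega$, is required.
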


\begin{proof}
It follows from the assumption on $\varphi'$ that there exists $\e_0 >0 $ so that
\begin{equation}\label{eq:slope}
\varphi(t_2) - \varphi(t_1) > 6\lambda(t_2-t_1) \text{ for any } 0 < t_1 \leq t_2 \leq \e_0.
\end{equation}

The proof contains two claims. First we prove that the set $\D_{\e}$ is connected for any  $0< \e < \e_0$, which implies by Lemma \ref{lem:appr} that $\D_{\e}$ is a Jordan domain. Next we show that $\g_{\e}=\partial\D_{\e}$ 
whenever  $0<\e < \e_0$,  which, combined with the first claim, states that $\g_{\e}$ is a Jordan curve when $\e < \e_0$.

\medskip

\emph{Step 1.} We claim that  the open set $\D_{\e}$ is connected for any $0<\e < \e_0$. Suppose, to the contrary, that there exists $0<\e<\e_0$ such that the open set $\D_{\e}$ has at least two components $D_1$ and $D_2$. This 
would imply, by the continuity of the distance function, that $\D_{\e'}\cup \g_{\e'}$ is not connected for some $\e' \in (\e,\e_0)$. From Lemma \ref{lem:gammadelta} it follows that there exist $\d < \e_0$, a component $D$ of 
$\D_{\d}$, and three distinct collinear points $x_0 \in \partial D$ and $x_1,x_2 \in \G$ such that
\[ |x_0 - x_1| = |x_0 - x_2| = \d, \]
and from Lemma \ref{lem:appr} it follows that  $D$ is a Jordan domain. Observe that $D$ is exterior to the disks $\overline{B^2}(x_1,\d)$ and $ \overline{B^2}(x_2,\d)$ and that  $\G \cap B^2(x_0,\d) =\emptyset$.

Fix a simple arc  $\sigma$ in $ D \cup \{x_0\}$ with end points  $x_0$ and $w_0$. Heuristically, the set
\[ W=\{(x,z)\colon x \in \sigma, |z|\leq \varphi(\dist(x, \G))\} \]
serves as a tall, wide wall which prevents two nearby points $z_1^+$ and $z_2^+$ on two sides of $W$ to be joined by a short path without piercing  $W$. This violates the $\text{LLC}_1$ property.

To this end, we fix a point $y_0$  on $ \sigma$ satisfying
\[ |y_0 - x_0| < \min \left \{ \frac{|w_0 - x_0|}{2}, \frac{\d}{16\lambda} \right \}, \]
and set $r = |y_0-x_0|$. Simple geometric consideration shows that
\[ \dist(y_0,\overline{B^2}(x_i,\d)) \leq \frac{r^2}{\d} < \frac{r}{16\lambda}. \]
Set $\rho = \d - \frac{r}{8\lambda}$. Since $\dist(y_0,\G) > \d$, by the continuity of the distance function, we can find points $ z_1 \in [y_0,x_1]$ and $ z_2 \in [y_0,x_2]$ with $\dist(z_1,\G)=\dist(z_2,\G)=\rho$. Since 
$|y_0-z_i|\geq \dist(y_0, \G) - \dist(z_i, \G)>\d - \rho= \frac{r}{8\lambda}$, the point $z_i$ is contained in  $B^2(x_i,\d)$, thus
\[\rho \leq |z_i-x_i| < \d\]
for $i=1,2$. Therefore $z_1$ and $z_2$ are in two different components of $B^2(y_0,r)\setminus \sigma$, and for $i=1,2$
\[|z_i-y_0|= |y_0-x_i| - |x_i-z_i| < \d+ \frac{r}{16\lambda}- \rho =\frac{3 r}{16\lambda}.\]

Recall that ${z}_1^+$ and ${z}_2^+$ are the lifts of $z_1$ and $z_2$ on $\S(\Omega,\varphi)^+$. Since
\begin{align*}
|z_1^+ - z_2^+| &=|z_1 -z_2| \leq  |y_0-z_1| + |y_0 -z_2| < \frac{3 r}{8\lambda},
\end{align*}
points $z_1^+$ and $z_2^+$ are in the ball
\[ B = B^3\left ({z}_1^+,\frac{3r}{8\lambda}\right ). \]
Since $\S(\Omega,\varphi)$ is $\lambda-\text{LLC}_1$, the points ${z}_1^+,{z}_2^+$ are contained in a continuum $E$ in $\lambda B \cap \S(\Omega,\varphi)$, where $\lambda B= B^3\left ({z}_1^+,\frac{3r}{8}\right )$. Note that 
if $u \in \pi(\lambda B)$ then
\[ |u-y_0| \leq |u-z_1| + |z_1-y_0| \leq \frac{3r}{8} + \frac{3r}{16\lambda}  < r, \]
which shows that $\pi(E) \subset \pi(\lambda B) \subset B^2(y_0,r)$.

Note that for any  $w \in E$, $w= (\pi(w), \varphi(\dist(\pi(w),\G)))$ and
\[| \varphi(\dist(\pi(w),\G)) - \varphi (\rho) | = |\varphi(\dist(\pi(w),\G)) - \varphi(\dist(z_1,\G)| \leq  |w - z_1^+| \leq \frac{3r}{8},\]
and on the other hand by \eqref{eq:slope}
\begin{equation*}
| \varphi(\dist(\pi(w),\G)) - \varphi (\rho) |\geq 6 \lambda | \dist(\pi(w),\G) - \rho|.
\end{equation*}
From these estimates it follows that $\dist(\pi(w),\G) < \rho + \frac{r}{16}< \d$ for any $w\in E$, and as a consequence $\pi(E)$ does not intersect $\sigma$. This leads to a contradiction, because $\pi(E)$ is a continuum 
containing two points $z_1$ and $z_2$ lying in two separate components of $B^2(y_0,r)\setminus \sigma$.

\medskip

\emph{Step 2.} We claim that $\partial\D_{\e} = \g_{\e}$ for each $\e \in (0, \e_0)$. Suppose the contrary. Pick a number $\e < \e_0$ for which $\partial\D_{\e} \subsetneqq \g_{\e}$. Then, by Lemma  \ref{lem:gammadelta}, there 
exists a component $D$ of $\D_{\e}$ and collinear points $x_0 \in \partial D$ and $x_1,x_2 \in \G$ such that
\[ |x_0 - x_1| = |x_0 - x_2| = \e. \]
The argument leading to a contradiction is similar to that  in Step 1; we omit the details.
\end{proof}

\begin{lem}\label{lem:lqc1}
Let $\Omega$ be a Jordan domain interior to a  $K$-quasicircle $\G$, and $\varphi$ be a function in $\mathcal{F}$ whose almost everywhere derivative  $\varphi'(t)\to \infty$ as $t\to 0$. If $\S(\Omega,\varphi)$ is 
$\lambda-\text{LLC}_1$ for some $\lambda >1$ then $\Omega$ has the LQC property. In particular, there exist $\epsilon_0 > 0$ depending only on $\lambda,\varphi$ and $K'>1$ depending only on $K, \lambda$ such that $\g_{\e}$ is 
a $K'$-quasicircle for any $0< \e \leq \e_0$.
\end{lem}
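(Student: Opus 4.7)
The goal is to verify the 2-point condition \eqref{eq:3pts} for $\g_\e$ with a constant depending only on $K$ and $\lambda$, for every sufficiently small $\e$; combined with Lemma~\ref{lem:LJC}, Ahlfors's characterization then gives the LQC property with $K' = K'(K,\lambda)$. Choose $\e_0>0$ (depending on $\lambda$ and $\varphi$) so small that Lemma~\ref{lem:LJC} applies throughout $(0, 2\e_0]$ and $\varphi'(t) \geq N := 100\lambda$ for every $t \in (0,2\e_0]$; in particular $\varphi(\e) \geq N\e$ for all $\e \leq \e_0$. Fix $\e \leq \e_0$, $x, y \in \g_\e$, and write $r = |x-y|$. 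I will bound $\diam \g_\e(x,y)$ in two regimes.

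\emph{Regime 1: $r \geq \e$.} Select $x', y' \in \G$ with $|x-x'|=|y-y'|=\e$; the non-crossing property gives $|x'-y'| \leq r + 2\e \leq 3r$. Since $\G$ is a $K$-quasicircle, the shorter arc $\Lambda = \G(x',y')$ satisfies $\diam \Lambda \leq C_K|x'-y'| \leq 3C_K r$, where $C_K$ is the 2-point constant of $\G$. Lemma~\ref{lem:orientation} applied to $\Lambda$ produces a subarc $\alpha = \{w \in \g_\e : \dist(w,\Lambda)=\e\}$ of $\g_\e$ that contains both $x$ and $y$ (since $x', y' \in \Lambda$) and lies in the $\e$-neighborhood of $\Lambda$, so $\diam \alpha \leq (3C_K+2)r$. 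Because $\alpha$ is a subarc of the Jordan curve $\g_\e$ containing $x$ and $y$, the subarc of $\alpha$ between $x$ and $y$ coincides with one of the two subarcs of $\g_\e$ joining these points; hence $\diam \g_\e(x,y) \leq \diam \alpha \leq (3C_K + 2)r$.

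\emph{Regime 2: $r < \e$.} Here $\lambda r < \lambda \e \leq \varphi(\e)$, so $B^3(x^+, \lambda r) \cap \S \subset \S^+$. By $\lambda$-LLC$_1$ there is a continuum $E \subset \S^+ \cap B^3(x^+, \lambda r)$ joining $x^+$ and $y^+$; its projection $\pi(E)$ is a continuum in $\Omega$ from $x$ to $y$ with $\diam \pi(E) \leq 2\lambda r$. For each $z \in E$, $|\varphi(d_{\pi(z)}) - \varphi(\e)| \leq \lambda r$ combined with $\varphi'(t) \geq N$ forces $|d_{\pi(z)} - \e| \leq \lambda r/N = r/100$. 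Thus $\pi(E) \subset T_\delta := \{w : |d_w - \e| \leq \delta\}$ with $\delta = r/100$. By Lemma~\ref{lem:LJC}, $T_\delta$ is an annular region bounded by the Jordan curves $\g_{\e \pm \delta}$ and foliated by $\{\g_{\e'}\}_{|\e'-\e|\leq \delta}$. Define the radial retraction $R: T_\delta \to \g_\e$ by flowing each $w \in T_\delta$ along the segment from $w$ to a nearest point on $\G$ (inward when $d_w > \e$, outward when $d_w < \e$) a distance $|d_w - \e|$; by the monotonicity property the flow terminates on $\g_\e$, and $|R(w) - w| \leq \delta$. Then $R(\pi(E))$ is a continuum in $\g_\e$ through $x, y$ with $\diam R(\pi(E)) \leq 2\lambda r + 2\delta \leq 3\lambda r$. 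In the Jordan curve $\g_\e$, such a continuum is either all of $\g_\e$ or a subarc; in either case at least one of the two subarcs of $\g_\e$ with endpoints $x, y$ has diameter at most $3\lambda r$, so $\diam \g_\e(x,y) \leq 3\lambda r$.

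Combining the two regimes yields a 2-point constant depending only on $K$ and $\lambda$, which via Ahlfors's characterization gives the LQC property with the stated dependence of $K'$. The principal obstacle is producing the retraction $R$ in Regime 2 as a \emph{continuous} map: the nearest-point map to $\G$ can be discontinuous at points with multiple nearest neighbors, and one cannot appeal a priori to niceness of $\g_\e$ since that is precisely what is being proved. This is resolved by using the continuous nested family of Jordan level curves from Lemma~\ref{lem:LJC} to realize a topological product structure $T_\delta \cong \g_\e \times (-\delta, \delta)$ whose vertical projection both realizes $R$ continuously and preserves the pointwise estimate $|R(w) - w| \leq \delta$.
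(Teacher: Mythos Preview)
Your two-regime decomposition is natural and parallels the paper's strategy: Regime~1, handled directly via Lemma~\ref{lem:orientation}, plays the same role as the paper's opening step showing that any failure of the $2$-point condition on $\g_\e$ must occur at scale $|x_1-x_3|<\e/(6\lambda)$. That part is correct.

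The genuine gap is in Regime~2, precisely at the retraction $R\colon T_\delta\to\g_\e$. Two problems: first, for $d_w<\e$ the ``outward'' flow is not covered by the monotonicity property, which only asserts that $\dist(\cdot,\G)$ is monotone along $[w,w']$ \emph{toward} the nearest point $w'\in\G$, not on the ray beyond $w$; so there is no reason the outward flow terminates on $\g_\e$ after distance $\e-d_w$. Second, even on the inward side, nearest points can be non-unique, so $R$ need not be continuous and $R(\pi(E))$ need not be connected. Your proposed fix via a product structure $T_\delta\cong\g_\e\times(-\delta,\delta)$ from the nested family $\{\g_{\e'}\}$ does not close this: any such homeomorphism is a priori only topological, and there is no reason its fiber projection satisfies the metric bound $|R(w)-w|\le\delta$. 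Requiring fibers with that metric control is essentially asking for the regularity of the level curves you are trying to prove.

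The paper bypasses the retraction entirely. Arguing by contradiction, if $x_1,x_2,x_3,x_4\in\g_\e$ are cyclic with $|x_2-x_1|,|x_4-x_1|>25\lambda C|x_1-x_3|$ and $d=|x_1-x_3|$, one takes \emph{fixed} nearest-point segments $[x_2,p_2]$ and $[x_4,p_4]$ to $\G$. Their intersections with the thin annulus $A=\overline{\D}_{\e-2d}\setminus\D_\e$ (your $T_\delta$) are short arcs, within distance $2d$ of $x_2$ and $x_4$ respectively, and by the non-crossing property they separate the points $z_1,z_3$ (lying on $[x_1,p_1],[x_3,p_3]$ just outside $\g_\e$) in $A$. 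The LLC$_1$ continuum $E$ joining $z_1^+,z_3^+$ has $\pi(E)\subset A$ by exactly your slope computation, so $\pi(E)$ must meet one of the two barrier segments and hence has diameter $\gtrsim 25\lambda C d$, contradicting $E\subset B^3(z_1^+,5\lambda d)$. Two fixed segments do all the separating; no continuous retraction is needed.
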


\begin{proof}
Fix $\e_0>0$ so that
\begin{equation}\label{eq:slope2}
\varphi'(t)  \geq 10\lambda \quad \text{ for almost every  } t\in(0,\e_0).
\end{equation}
By Lemma \ref{lem:LJC}, the level set $\g_{\e}$ is a Jordan curve for any $\e < \e_0$.

Since $\G$ is a $K$-quasicircle, there exists $C>1$ depending  only on $K$ so that
\begin{equation}\label{eq:2pt_lemma}
\diam{\G(x,y)} \leq C|x-y|,\,\,\, \text{ for all }  x,y \in \G.
\end{equation}
It suffices to show that  for every $\e \in (0,\e_0)$, the curve $\g_{\e}$ satisfies the $2$-point condition
\begin{equation}\label{eq:2pt_50C}
 \diam{\g_{\e}(x,y)} \leq 50\lambda C|x-y|,\,\,\, \text{ for all }  x,y \in \g_{\e}.
 \end{equation}
Otherwise, there exist $\e \in (0,\e_0)$ and points $x_1,x_2 ,x_3,x_4 $ on $\g_{\e}$ in cyclic order such that $x_2$ and $x_4$ are on two different components of $\g_{\e} \setminus \{x_1,x_3\}$ and that
\[ |x_2-x_1|, |x_4-x_1| > 25\lambda C|x_1-x_3|. \]

We claim that
\[ |x_1-x_3|< \frac{\e}{6\lambda}. \]
For each $i$ fix a point $p_i$ on $ \G$ that is nearest to $x_i$; so $|p_i-x_i| = \e$. By Lemma \ref{lem:orientation}, the points $p_1$, $p_2$, $p_3$, $p_4$ follow the ordering of $x_1$, $x_2$, $x_3$, $x_4$. However some of 
the points $p_i$ might coincide. Note that $|p_1-p_3| \leq 2\e + |x_1-x_3|$ and from \eqref{eq:2pt_lemma}  that
\begin{align*}
C|p_1-p_3|   &\geq \min\{|p_1 - p_2|, |p_1-p_4| \} \\
             &\geq \min\{|x_1 - x_2|, |x_1-x_4| \} - 2\e \\
             &\geq 25\lambda C|x_1-x_3| -2\e.
\end{align*}
Hence, $|x_1-x_3| < \frac{\e}{6\lambda}$. Set $d = |x_1-x_3|$.

Since $p_i$ is a nearest point  on $\G$ to $x_i$, the intersection $[x_i,p_i]\cap \gamma_{\epsilon-d}$ contains a single point $z_i$; and let ${z_i}^+ = (z_i,\varphi(\epsilon-d))$ be its lift on $\S(\Omega,\varphi)^+$. As before, 
consider a  wall
\[ W'=\{(x,z)\colon x \in \overline{\D_\e}, |z|\leq \varphi(\dist(x, \G))\}, \]
and observe that points $z_1^+$ and $z_2^+$ are separated by a thin part of wall $W'$. Any path joining $z_1^+$ to $z_2^+$ without piercing $W'$ must climb above or go around the wall; therefore such a path has to be long 
relative to the distance between $z_1^+$ and $z_2^+$.

Consider the ball $B = B^3({z}_1^+,5d)$. Since $|z_1-z_3| \leq |z_1-x_1| + |x_1-x_3| +|x_3-z_3| \leq 3d$ we have that ${z}_3^+ \in B$. By the $\lambda-\text{LLC}_1$ there is a continuum $E$ in $\lambda B\cap \S(\Omega,\varphi)$ 
that contains ${z}_1^+,{z}_3^+$, where $\lambda B = B^3({z}_1^+,5 \lambda d)$. Note  that for any $w\in E$, $w= (\pi(w), \varphi(\dist(\pi(w),\G)))$ and that
\[ 10 \lambda |\pi(w)-z_1| \leq |\varphi(\dist(\pi(w),\G))-\varphi(\dist(z_1,\G))|\leq |w - z_1^+|<5\lambda d. \]
Hence  $\pi(E)$ is contained in the annular region $A= \overline{\D}_{\e-2d} \setminus \D_{\e}$ bordered by two Jordan curves $\g_{\e-2d}$ and $\g_{\e}$. Since $\pi(E)$ is a continuum in $\pi(\lambda B)\cap\overline{\Omega}$ 
that contains $z_1,z_3$, it must intersect both components in $A \setminus ([x_2,p_2] \cup [x_4, p_4])$, hence  intersects at least one of the two segments $[x_2,p_2]$ and $[x_4,p_4]$. From this it follows that
\begin{align*}
\diam(\pi(E)) &\geq  \min\{|x_1 - x_2|, |x_1-x_4| \} - |x_1-z_1| -|x_3-z_3|\\
              &\geq 25\lambda C d - 12 d \\
              &>13 \lambda d.
\end{align*}
On the other hand, since $E \subset \lambda B $, $\diam{\pi(E)} \leq 13 \lambda d$, which is a contradiction. Therefore  \eqref{eq:2pt_50C}  must hold.
\end{proof}

\section{Quasisymmetric  spheres over quasidisks}\label{sec:heightdistance}
First, we give the proofs of Theorem \ref{thm:LCA-QS-main} and Corollary \ref{cor:flatness-QSspheres-1/2}.

\begin{proof}[Proof of Theorem \ref{thm:LCA-QS-main}]
To prove the sufficiency, we apply Theorem \ref{thm:BonkKleiner} of  Bonk and Kleiner.  It has been shown, in Proposition \ref{prop:LLCiffLQC}, that if $\Omega$ has the LQC property then $\S(\Omega,\varphi)$ is LLC for all 
$\varphi \in \mathcal{F}$. Proposition \ref{prop:2-reg} in Section \ref{sec:2-regsection} below asserts  that if $\Omega$ has the LCA property then $\S(\Omega,\varphi)$ is Ahlfors $2$-regular for every $\varphi\in\mathcal{F}$. 
The sufficiency follows by Theorem \ref{thm:BonkKleiner}.

It has been shown in Proposition \ref{prop:LLCiffLQC} that if $\S(\Omega,\varphi)$ is LLC for all $\varphi \in \mathcal{F}$ then $\Omega$ must have the LQC property, in particular $\partial \Omega$ is a quasicircle. Proposition 
\ref{prop:chord-arcnec} in Section \ref{sec:vaismethod} states that if $\partial\Omega$ is a quasicircle but not a chord-arc curve then there exists a gauge function $\varphi\in\mathcal{F}$, necessarily depending on 
$\partial\Omega$, so that the associated double-dome-like surface $\S(\Omega,\varphi)$ fails to be quasisymmetric to $\mathbb{S}^2$. From these the necessity of the theorem follows.
\end{proof}

\begin{proof}[Proof of Corollary \ref{cor:flatness-QSspheres-1/2}]  Since $\partial \Omega$ is a chord-arc curve with flatness $\zeta_{\partial \Omega} < 1/2$, $\Omega$ has the LCA property by Theorem \ref{thm:VW-1/2}. Therefore, 
by Theorem \ref{thm:LCA-QS-main}, $\S(\Omega,\varphi)$ is a quasisymmetric sphere for every $\varphi \in \mathcal{F}$. On the other hand, there exists a domain $\Omega$ whose boundary is a chord-arc curve with flatness 
$\zeta_{\partial\Omega} = 1/2$ and which does not satisfy the LQC property; see \cite[Remark 5.2]{VW} for an example. By Lemma \ref{lem:lqc1}, $\S(\Omega,t^{\a})$ is not quasisymmetric to $\mathbb{S}^2$ for any $\a \in (0,1)$.
\end{proof}

The remaining part of this section is devoted to the proofs of Proposition \ref{prop:2-reg} and Proposition \ref{prop:chord-arcnec}.

\medskip

\subsection{Square pieces on $\S(\Omega,\varphi)$ with the assumption of LQC on $\Omega$}\label{sec:squarepieces}
Assume that a Jordan domain $\Omega$ has the  LQC property: there exist $C_0>1$ and $\e_0>0$ such that for any $ 0\leq \e\leq \e_0$
\begin{equation}\label{eq:LQCforsquare}
\diam{\g_\e(x,y)} \leq C_0|x-y|,  \quad \text{   for all }  x,y \in \g_\e.
\end{equation}

Unless otherwise mentioned, constants and comparison ratios in $\simeq$ and $\lesssim$ in this subsection depend at most on $C_0, \e_0$ and $\diam \Omega$.

\subsubsection{Quadrilaterals in $\Omega$}

A quadruple $\langle x_1,y_1,x_2,y_2 \rangle $ of (distinct) points  in $\overline{\Omega}$ is said to be \emph{admissible}, if
\begin{enumerate}
\item[(i)] $x_1, y_1\in \g_{t_1} \text{ and } x_2, y_2 \in \g_{t_2} \text{ for some }0\leq t_2<t_1\leq \e_0,$
\item[(ii)] $ |x_1- x_2| = \dist(x_1,\g_{t_2})=  |y_1-y_2| =\dist(y_1,\g_{t_2})=  t_1-t_2,$
\item[(iii)] $t_1-t_2 \leq |x_1-y_1| /3 \leq \diam{\Omega}/(10C_0)$.
\end{enumerate}
Note from the non-crossing and the monotonicity properties about the distance functions stated in Section \ref{sec:levelsets}, that the segments $[x_1, x_2]$ and $[y_1, y_2]$ do not meet and  that arcs  $\g_{t_1}(x_1, y_1)$ and 
$\g_{t_2}(x_2, y_2)$ intersect $[x_1,x_2] \cup [y_1,y_2]$ only at their end points. Denote by $Q(x_1,y_1,x_2,y_2)$ the quadrilateral whose boundary is the Jordan curve 
$[x_1,x_2]\cup [y_1,y_2] \cup \g_{t_1}(x_1,y_1) \cup \g_{t_2}(x_2,y_2)$.

Similarly, every $\g_t$, $t_2\leq t \leq t_1$ intersects  the segment $[x_1,x_2]$ (resp. $[y_1,y_2]$)  at precisely one point which we call $x^t$ (resp.$y^t$), and the arc $\g_t(x^t, y^t)$ is contained entirely in 
$Q(x_1,y_1,x_2,y_2)$. Note from (iii) and the LQC property that
\begin{equation}\label{eq:disk}
Q(x_1,y_1,x_2,y_2) \text{ contains a disk } B^2(z,(t_1-t_2)/4)
\end{equation}
centered on $ \g_{(t_1+t_2)/2}$, and that
\begin{equation}\label{eq:diameter}
\diam{\g_t(x^t, y^t)} \simeq  |x_1-y_1|
\end{equation}
and
\[ \diam{Q(x_1,y_1,x_2,y_2)} \simeq |x_1-y_1|. \]
for all  $t_2\leq t \leq t_1$.

\subsubsection{Square pieces}\label{sec:square-center}
Let $\varphi$ be a function in $\mathcal F$. A quadruple $\langle x_1,y_1,x_2,y_2 \rangle$  of points in $\S(\Omega,\varphi)^+$ is said to be \emph{admissible} if its projection quadruple 
$\langle \pi(x_1),\pi(y_1),\pi(x_2),\pi(y_2)\rangle$ is admissible in $\Omega$. In this case denote by  $D = D(x_1,y_1,x_2,y_2)$  the lift of $Q(\pi(x_1),\pi(y_1),\pi(x_2),\pi(y_2))$ on the surface $\S(\Omega,\varphi)^+$, i.e.,
\[ D(x_1,y_1,x_2,y_2)= \{(w,\varphi(\dist(w,\partial \Omega)))\colon w\in Q(\pi(x_1),\pi(y_1),\pi(x_2),\pi(y_2))\}. \]
By (i) and (ii),
\[|x_1-y_1|=|\pi(x_1)-\pi(y_1)|,\, |x_2-y_2|=|\pi(x_2)-\pi(y_2)|. \]

If $\langle x_1,y_1,x_2,y_2 \rangle$ is admissible in $\S(\Omega,\varphi)^+$  and satisfies, in addition,
\begin{enumerate}
\item[(iv)]\quad $\frac{1}{20 C_0}|x_1-y_1| \leq t_1-t_2+\varphi(t_1)-\varphi(t_2)  \leq \frac{1}{3}|x_1-y_1|,$
\end{enumerate}
we say  $\langle x_1,y_1,x_2,y_2 \rangle$ is \emph{admissible for a square piece} and call $ D(x_1,y_1,x_2,y_2)$ a \emph{square piece on $\S(\Omega,\varphi)^+$}. (Square pieces on $\S(\Omega,\varphi)^-$ can be defined 
analogously.) Note from the monotonicity of $\varphi$ that
\[ |y_1-y_2|= |x_1-x_2| \leq t_1-t_2 + \varphi(t_1) - \varphi(t_2) \leq \sqrt{2} |x_1-x_2|. \]
Also (iv) and the LQC property of $\Omega$ yield that the diameter of a square piece $D(x_1,y_1,x_2,y_2)$ is comparable to $|x_1-y_1|$:

\begin{equation}\label{eq:square-diam}
C_1^{-1}( t_1-t_2 + \varphi(t_1) - \varphi(t_2))\leq \diam{D(x_1,y_1,x_2,y_2)}  \leq C_1( t_1-t_2 + \varphi(t_1) - \varphi(t_2))
\end{equation}
for some constant $C_1>1$.

Fix a point $z= z(x_1,y_1,x_2,y_2)$, called a \emph{rough center} of the square piece $ D(x_1,y_1,x_2,y_2)$,  as follows. First, let $t_3$ be the unique number in $[t_2,t_1]$ satisfying
\[t_3-t_2 + \varphi(t_3) - \varphi(t_2)= \frac{1}{2}(t_1-t_2 + \varphi(t_1) - \varphi(t_2)),\]
and $\sigma$  be the lift of
\[ \g_{t_3} \cap Q(\pi(x_1),\pi(y_1),\pi(x_2),\pi(y_2)).\]
on $D(x_1,y_1,x_2,y_2) \subset \S(\Omega,\varphi) $. Then, take $z$ to be a point (choices may be plentiful)  on the arc $\sigma$ of equal distance to both endpoints of $\sigma$. It is straightforward to to check that
\begin{equation}\label{eq:center}
|z-w| \simeq \diam{D(x_1,y_1,x_2,y_2)}, \,\,\text{   for every } w\in \partial D(x_1,y_1,x_2,y_2).
\end{equation}

\begin{rem}\label{rem:contain}
Let $a$ be a point in $\S(\Omega,\varphi)^+$ with $\dist(\pi(a), \partial \Omega)<\e_0$ and let $0< r \leq \min\{\e_0/3 ,\frac{1}{200 C_0^2}\diam{\Omega}\}$. Then the surface $\S^+(\Omega,\varphi)\cap B^3(a,r)$ is contained in 
a square piece $D_1$ and contains a square piece $D_2$ with diameters comparable to $r$.
\end{rem}

To find $D_1$, we set
\[t_1=\max\{\dist(x,\partial \Omega) \colon x\in \pi(\S(\Omega,\varphi)^+ \cap \overline{B}^3(a,r))\},\]
\[ t_2=\min \{\dist(x,\partial \Omega) \colon x\in \pi(\S(\Omega,\varphi)^+ \cap \overline{B}^3(a,r))\}.\]
By the  monotonicity of $\varphi$,
\begin{equation}\label{eq:square-r}
r \leq t_1-t_2 + \varphi(t_1) - \varphi(t_2) \leq 2 \sqrt{2} r.
\end{equation}
Fix a point $z_1 \in \g_{t_1}$ whose lift $z_1^+$ on $\S(\Omega,\varphi)^+$ is contained in $\overline{B}^3(a,r)$. Choose  $x_1,y_1 \in \g_{t_1}$ such that $|x_1-z_1|= |y_1-z_1| = 8 C_0 r$. Let $x_2$ (resp. $y_2$) be a point on 
$\g_{t_2}$ that is closest to $x_1$ (resp. $y_1$). By the LQC property the quadruple $\langle x_1,y_1,x_2,y_2 \rangle$ is admissible in $\Omega$.  We check that the quadrilateral $Q(x_1,y_1,x_2,y_2)$ contains 
$\pi(\S(\Omega,\varphi)^+ \cap B^3(a,r))$ by showing  $|w-\pi(a)| > r$ for every  $w\in\g_t\setminus \g_t(x^t,y^t)$ with $t\in [t_2,t_1]$. Given $w\in\g_t\setminus \g_t(x^t,y^t)$ with $t\in [t_2,t_1]$, we fix a point  $z^t$ in 
$\g_{t}(x^t,y^t)$ that is closest to $z_1$. The arc $\g_t(w,z^t)$ contains one of the  points $x^t,y^t$; assume that it contains $x^t$. Note that
\[\diam{\g_t(w,z^t)} \geq |x^t-z^t| \geq |x_1-z_1| - |x_1-x^t| - |z^t- z_1| > 4 C_0r.\]
On the other hand, by the LQC property, $\diam{\g_t(w,w^t)}\leq C_0 |w-z^t|$. Hence $|w-\pi(a)| \geq |w-z^t|-|z^t-\pi (a)| >r$.

So $Q(x_1,y_1,x_2,y_2)$ lifts to a square piece  $D_1= D(x_1^+,y_1^+,x_2^+,y_2^+)$ which contains $ \S(\Omega,\varphi)^+ \cap B^3(a,r)$ and, by \eqref{eq:square-diam} and \eqref{eq:square-r}, has diameter
\[ \diam D_1 \leq 2 \sqrt{2}C_1 r;\]
recall that  $x_1^+,y_1^+,x_2^+,y_2^+$  are the lifts of $x_1,y_1,x_2,y_2$ on  $\S(\Omega,\varphi)^+$.

Define a square piece $D_2$ associated to $\S(\Omega,\varphi)^+ \cap B^3(a,r/(100 C_1))$ following the steps in constructing $D_1$ in the previous paragraph; here $C_1$ is the constant in \eqref{eq:square-diam}. So
\[\diam D_2 \leq \frac{20 \sqrt{2}C_1}{100 C_1} r < r/3.\]
Since $D_2$ intersects $ B^3(a,\frac{1}{100C_1}r)$, it is contained in $ \S(\Omega,\varphi)^+ \cap B^3(a,r)$.

\subsection{Ahlfors 2-regularity}\label{sec:2-regsection}
We prove in this section the following proposition which, combined with Theorem \ref{thm:BonkKleiner} and Lemma \ref{lem:LLC1}, concludes the sufficiency in Theorem \ref{thm:LCA-QS-main}.

\begin{prop}\label{prop:2-reg}
Let $\Omega$ be a planar Jordan that satisfies the level chord-arc property. Then $\S(\Omega,\varphi)$ is Ahlfors $2$-regular for every $\varphi \in \mathcal{F}$.
\end{prop}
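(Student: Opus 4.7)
The plan is to exploit the square piece machinery of Section~\ref{sec:squarepieces}, which applies since LCA implies LQC (Proposition~\ref{prop:lqc+chord-arc}). The essential new ingredient from LCA beyond LQC is that every level curve $\g_t$ is uniformly chord-arc, so arcs on $\g_t$ have length comparable to diameter. By symmetry we may restrict attention to $\S(\Omega,\varphi)^+$. Given $a \in \S(\Omega,\varphi)$ and $0 < r \leq \diam \S(\Omega,\varphi)$, the goal is to show $\mathcal{H}^2(B^3(a,r) \cap \S(\Omega,\varphi)) \simeq r^2$.

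By Remark~\ref{rem:contain}, for small $r$ and for $a$ with $\dist(\pi(a),\partial\Omega) < \e_0$, the set $\S(\Omega,\varphi)^+ \cap B^3(a,r)$ is sandwiched between two square pieces $D_2 \subset \S(\Omega,\varphi)^+ \cap B^3(a,r) \subset D_1$ with diameters comparable to $r$. So it suffices to prove that every square piece $D = D(x_1,y_1,x_2,y_2)$ over $Q \subset \overline{\Omega}$ with parameters $0 \leq t_2 < t_1 \leq \e_0$ satisfies $\mathcal{H}^2(D) \simeq d^2$, where $d := \diam D$ and condition (iv) gives $d \simeq (t_1-t_2) + (\varphi(t_1)-\varphi(t_2))$. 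The length estimate coming from LCA is
\[\mathcal{H}^1(\g_t \cap Q) \simeq d \quad\text{for every } t \in [t_2,t_1],\]
because $\g_t$ is uniformly chord-arc and $\g_t \cap Q$ has diameter $\simeq d$ by \eqref{eq:diameter}.

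For the upper bound, observe that any $\varphi \in \mathcal{F}$ is absolutely continuous on each $[0,T]$ (it is Lipschitz on $[\rho,\infty)$ for every $\rho>0$, continuous at $0$ with $\varphi(0)=0$, and monotone with $\int_0^T \varphi'\,dt = \varphi(T) < \infty$). Hence $f(w) := \varphi(\dist(w,\partial\Omega))$ is locally Lipschitz on $\Omega \setminus \partial\Omega$ with $|\nabla f| = \varphi'(\dist)$ almost everywhere. Applying the area formula on each $Q \cap \{\dist > \epsilon\}$ and letting $\epsilon \to 0$ (the contribution from $Q \cap \partial\Omega$ lifts to a $1$-dimensional, hence $\mathcal{H}^2$-null, subset of $\R^3$) gives
\[\mathcal{H}^2(D) \;\leq\; \int_Q \bigl(1 + \varphi'(\dist(w,\partial\Omega))\bigr)\,dw \;=\; \int_{t_2}^{t_1}\bigl(1+\varphi'(t)\bigr)\,\mathcal{H}^1(\g_t \cap Q)\,dt,\]
the second equality being the coarea identity on $Q$ for the $1$-Lipschitz function $\dist(\cdot,\partial\Omega)$. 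The level-set estimate turns this into $\lesssim d[(t_1-t_2)+(\varphi(t_1)-\varphi(t_2))] \simeq d^2$. For the lower bound, use two $1$-Lipschitz maps defined on the rectifiable surface $D$. The horizontal projection $\pi\colon D \to Q$ gives $\mathcal{H}^2(D) \geq |Q| = \int_{t_2}^{t_1}\mathcal{H}^1(\g_t \cap Q)\,dt \simeq d(t_1-t_2)$. The height function $(w,z)\mapsto z$ has level sets $D \cap \{z=s\}$ equal to lifts of $\g_{\varphi^{-1}(s)} \cap Q$, each of length $\simeq d$; the coarea inequality for Lipschitz functions on rectifiable sets yields $\mathcal{H}^2(D) \geq \int_{\varphi(t_2)}^{\varphi(t_1)}\mathcal{H}^1(D \cap \{z=s\})\,ds \simeq d(\varphi(t_1)-\varphi(t_2))$. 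Summing the two bounds,
\[\mathcal{H}^2(D) \;\gtrsim\; d\bigl[(t_1-t_2)+(\varphi(t_1)-\varphi(t_2))\bigr] \;\simeq\; d^2.\]

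The remaining regimes are straightforward: when $\dist(\pi(a),\partial\Omega) \geq \e_0$ the surface is a Lipschitz graph of $\varphi \circ \dist$ in a neighborhood of $a$ and Ahlfors $2$-regularity is immediate; when $r$ is of order $\diam \S(\Omega,\varphi)$, the total area estimate $\mathcal{H}^2(\S(\Omega,\varphi)) \simeq (\diam \S(\Omega,\varphi))^2$ (upper bound from a single application of the coarea calculation on all of $\Omega$, lower bound from $|\Omega| \simeq (\diam\Omega)^2$, valid because $\partial\Omega$ is a quasicircle) closes the argument. The main technical obstacle is the possible blow-up of $\varphi'$ at $0$; it is handled by the absolute continuity of $\varphi$ and by the uniformity of the chord-arc length estimate $\mathcal{H}^1(\g_t \cap Q) \simeq d$ down to $t=0$.
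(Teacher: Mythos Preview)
Your argument is correct and shares the paper's overall architecture: restrict by symmetry to $\S(\Omega,\varphi)^+$, treat the Lipschitz-graph region $\D_{\e_0/3}^+$ separately, and for $\pi(a)$ near $\partial\Omega$ and small $r$ sandwich $B^3(a,r)\cap\S^+$ between two square pieces via Remark~\ref{rem:contain} and estimate $\mathcal H^2$ of a square piece. Your lower bound is essentially the paper's---projection onto $\R^2$ and onto the $z$-axis; the paper phrases these as ``\eqref{eq:disk} plus (iv)'' in one case and ``Fubini plus \eqref{eq:diameter}'' in the other, but the content is the same.

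The upper bound is where the routes diverge. The paper uses a direct covering argument: partition $[t_2,t_1]$ so that the graph of $\varphi$ over each subinterval has length between $\e/4$ and $\e/2$ (possible since $\varphi$ is monotone, so the graph over $[t_2,t_1]$ has total length $\le \sqrt 2\,|x_1-x_2|$), then cover each resulting strip of $D$ by $\lesssim |x_1-y_1|/\e$ balls of radius $\e$ using the chord-arc length estimate \eqref{eq:length}, obtaining a cover by $\lesssim |x_1-y_1|^2/\e^2$ balls. Your route---area formula for the graph of $\varphi\circ\dist$ followed by the coarea formula for $\dist(\cdot,\partial\Omega)$---is analytically cleaner but needs two ingredients the paper avoids: the absolute continuity of $\varphi$ on $[0,T]$ (your parenthetical supplies the right pieces, though the logic is slightly inverted: one first proves AC from local Lipschitzness together with continuity and monotonicity at $0$, and \emph{then} deduces $\int_0^T\varphi'=\varphi(T)$), and the a.e.\ identity $|\nabla\dist(\cdot,\partial\Omega)|=1$ needed for the coarea step. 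The paper's covering argument is more elementary in that it uses only monotonicity of $\varphi$; your approach packages the same information through standard measure-theoretic formulas.
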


The statement is quantitative. Suppose that $\Omega$ has the LCA property: there exists $c_0>1$ such that for every  $ 0\leq \e \leq \e_0$,
\begin{equation}\label{eq:LCAforsquare}
\ell({\g_\e(x,y)}) \leq c_0|x-y|,  \quad \text{ for all }  x,y \in \g_\e.
\end{equation}
Suppose also that $\varphi \in \mathcal F$ is  $L$-Lipschitz in $[\e_0/3,\infty)$. Then $\S(\Omega,\varphi)$ satisfies \eqref{eq:2regdefn} for some constant $C$ depending only on $c_0, \e_0,L$ and $\diam{\Omega}$.

\begin{proof}
Recall that $\D_{\e_0/3}$ consists of all points in $\Omega$ whose distance from $\partial \Omega$ is greater than  $\e_0/3$ and  $\D_{\e_0/3}^+$ is its lift on $\S(\Omega,\varphi)^+$. Since $\partial \D_{\e_0/3}=\g_{\e_0/3}$ 
is a $c_0$-chord-arc curve, the domain $\D_{\e_0/3}$ is the bi-Lipschitz image of the unit disk. The surface $\D_{\e_0/3}^+$, which is the graph of a $L$-Lipschitz function on  $\D_{\e_0/3}$,  is Ahlfors $2$-regular. Since the 
surface $\S(\Omega, \varphi)$ is symmetric with respect to $\R^2\times \{0\}$, it suffices to verify
\begin{equation}\label{eq:Ahlforsreg}
C^{-1} r^2  \le \mathcal{H}^2(B^3(a,r)\cap\S(\Omega,\varphi)^+) \leq C r^2,
\end{equation}
only for those points $a\in \S(\Omega,\varphi)^+$ whose projection has $\dist(\pi(a), \partial \Omega)<\e_0$ and for  $0< r \leq \min\{\e_0/3 ,\frac{1}{180 c_0^2}\diam{\Omega}\}$. This statement follows immediately from Remark 
\ref{rem:contain} and the area estimates \eqref{eq:LCA-area-D} for square pieces below.
\end{proof}

\subsubsection{Area of square pieces}
Let $\varphi\in \mathcal F$ and  $ D(x_1,y_1,x_2,y_2)$ be a square piece on $\S(\Omega,\varphi)^+$. We retain all assumptions and notations associated to the definitions of quadrilaterals and square pieces from the previous 
subsections.

First we observe that, assuming LQC on $\Omega$, the area of a square piece satisfies
\begin{equation}\label{eq:area-D}
\mathcal{H}^2(D(x_1,y_1,x_2,y_2)) \gtrsim |x_1-y_1|^2.
\end{equation}
When $t_1-t_2 \geq \varphi(t_1)-\varphi(t_2)$, we estimate $\mathcal{H}^2(D(x_1,y_1,x_2,y_2))$ from below by projecting $D(x_1,y_1,x_2,y_2)$ onto the plane $\R^2$ then applying  \eqref{eq:disk} and (iv). When 
$t_1-t_2 < \varphi(t_1)-\varphi(t_2)$, the lower estimate follows  from (iv), \eqref{eq:diameter} and the Fubini Theorem.

Next we claim that, assuming LCA on $\Omega$, the area of a square piece satisfies
\begin{equation}\label{eq:LCA-area-D}
C^{-1}|x_1-y_1|^2 \leq \mathcal{H}^2(D(x_1,y_1,x_2,y_2)) \leq C|x_1-y_1|^2,
\end{equation}
for some constant $C>1$ depending on $c_0,\e_0$ and $\diam \Omega$.

Note  by (iv) and \eqref{eq:LCAforsquare} that
\begin{equation}\label{eq:length}
\ell (\g_t(x^t, y^t))\simeq |x_1-y_1|,
\end{equation}
for all $t\in[t_2,t_1]$, recalling $0\leq t_2< t_1\leq \e_0$. To establish the upper bound, we let $\e \in (0,\frac{1}{3}|x_1-x_2|)$. Since $\varphi$ is monotone, the length of the  graph 
$\sigma=\{(t,\varphi(t))\colon t \in [t_2,t_1]\}$ of $\varphi$ over $[t_2,t_1]$ is at most $\sqrt{2} |x_1-x_2|$. Hence, there is a partition of $[t_2,t_1]$
\[ t_2 = \tau_n < \dots < \tau_{i+1} < \tau_i < \dots < \tau_0 = t_1 \]
such that length of the graph $\sigma_i$ of $\varphi$ over $[\tau_{i+1}, \tau_i]$ satisfies
\[ \e/4 \leq \ell(\sigma_i) \leq \e/2, \]
and therefore the number $n$ in the partition has an upper bound
\[ n \leq \frac{\ell(\sigma)}{\e/4} \leq \frac{\sqrt{2}|x_1-x_2|}{\e/4} \lesssim \frac{|x_1-y_1|}{\e}.\]
We next partition $D(x_1,y_1,x_2,y_2)$ into strips by the lifts $\g_{\tau_i}^+$ of the level curves $\g_{\tau_i}$. It follows from (iv) and \eqref{eq:length} that each strip can be covered by at most $C_2|x_1-y_1|/\e$ balls of 
radius $\e$. Therefore, $D(x_1,y_1,x_2,y_2)$ can be covered by at most $C_3|x_1-y_1|^2/\e^2$ balls of radius $\e$. This verifies the upper estimate and the claim \eqref{eq:LCA-area-D}.

\subsection{Chord-arc curves and V\"ais\"al\"a's method}\label{sec:vaismethod}
In this section we show that the chord-arc property of $\G$ is necessary for $\S(\Omega,\varphi)$ to be quasisymmetric sphere for all $\varphi \in\mathcal{F}$; this claim together with Proposition \ref{prop:LLCiffLQC} and 
Proposition \ref{prop:lqc+chord-arc} concludes the necessity part in Theorem \ref{thm:LCA-QS-main}.

\begin{prop}\label{prop:chord-arcnec}
Suppose that $\Omega$ is a quasidisk but $\G = \partial\Omega$ is not a chord-arc curve. Then there exists $\varphi \in\mathcal{F}$ such that $\S(\Omega,\varphi)$ is not quasisymmetric to $\mathbb{S}^2$.
\end{prop}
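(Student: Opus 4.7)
The strategy is to invoke V\"ais\"al\"a's theorem from \cite{Vais3}: a product $\gamma \times I$ of a Jordan arc and an interval quasisymmetrically embeds in $\R^2$ if and only if $\gamma$ is chord-arc. The plan is to construct, for a well-chosen $\varphi \in \mathcal F$, a bi-Lipschitz copy of $\G \times I$ inside $\S(\Omega, \varphi)$; then if the surface were quasisymmetric to $\mathbb S^2$, restricting the quasisymmetric homeomorphism to this copy and post-composing with inverse stereographic projection would yield a quasisymmetric embedding $\G \times I \hookrightarrow \R^2$, contradicting V\"ais\"al\"a's theorem. As a first reduction, if $\Omega$ fails the LQC property, Proposition \ref{prop:LLCiffLQC} already produces a $\varphi \in \mathcal F$ for which $\S(\Omega,\varphi)$ is not LLC, hence not quasisymmetric to $\mathbb S^2$. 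So we may assume $\Omega$ has the LQC property; by Proposition \ref{prop:lqc+chord-arc}, $\Omega$ then fails the LCA property precisely because $\G$ is not chord-arc.

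To exhibit the bi-Lipschitz $\G \times I$, I would pick $\varphi \in \mathcal F$ with $\varphi(t)/t \to \infty$ as $t \to 0^+$; for instance $\varphi(t)=\sqrt{t}$ on $[0,1]$, extended to be Lipschitz and linear beyond. With this choice, at vertical height $s$ on the surface the relevant level curve $\g_{\varphi^{-1}(s)}$ lies within horizontal distance $M\varphi^{-1}(s)$ of $\G$ by Lemma \ref{lem:proximity}, and $\varphi^{-1}(s)$ is of strictly smaller order than $s$ itself. Define $\Psi\colon\G\times[-h_0,h_0]\to\S(\Omega,\varphi)$ by sending $(x,s)$ to the point on the upper ($s\ge0$) or lower ($s<0$) sheet of height $s$ whose horizontal projection is the nearest point on $\g_{\varphi^{-1}(|s|)}$ to $x$; the LQC property together with the non-crossing of distance-realizing segments ensures this is well-defined and continuous, while the smallness of the horizontal slack compared to the vertical scale should yield a bi-Lipschitz estimate between the Euclidean metric on $\S(\Omega,\varphi)$ and the product metric on $\G\times[-h_0,h_0]$. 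Assuming $\S(\Omega,\varphi)$ is $\eta$-quasisymmetric to $\mathbb S^2$, the QS map restricted to $\Psi(\G\times[-h_0,h_0])$, composed with $\Psi^{-1}$ and with inverse stereographic projection from a point disjoint from the image, produces a quasisymmetric embedding of $\G\times[-h_0,h_0]$ into $\R^2$; V\"ais\"al\"a's theorem then forces $\G$ to be chord-arc, a contradiction.

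The hardest step is verifying that $\Psi$ is genuinely bi-Lipschitz uniformly in scale. When the horizontal separation $|x_1-x_2|$ on $\G$ is of the same order as $\varphi^{-1}(|s|)$, the nearest-point correspondence between $\G$ and $\g_{\varphi^{-1}(|s|)}$ can distort distances substantially, and since $\G$ is not chord-arc one cannot compare arc lengths directly. The expected way around this is to localize the argument: from the failure of chord-arc, extract sequences $(a_n,b_n)\subset\G$ with $d_n=|a_n-b_n|\to 0$ and $\ell(\G(a_n,b_n))/d_n\to\infty$; build for each $n$ a bi-Lipschitz copy of the product $\G(a_n,b_n)\times[-h_n,h_n]$, with $h_n$ chosen so that $\varphi^{-1}(h_n)\ll d_n$ (guaranteeing uniform bi-Lipschitz constants on that scale); and then apply a quantitative form of V\"ais\"al\"a's theorem to extract a chord-arc bound on $\G(a_n,b_n)$ depending only on $\eta$ and the bi-Lipschitz constant, which contradicts $\ell(\G(a_n,b_n))/d_n\to\infty$.
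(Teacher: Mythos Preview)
Your reduction to the LQC case and the overall instinct to invoke V\"ais\"al\"a's product obstruction are both right, and match the paper. But the core of your argument --- the bi-Lipschitz embedding $\Psi\colon \G(a_n,b_n)\times[-h_n,h_n]\to\S(\Omega,\varphi)$ via nearest-point projection onto level curves --- has a genuine gap that you yourself flag but do not close. Nearest-point projection from $\G$ to $\g_\epsilon$ is neither single-valued nor Lipschitz from below: two points of $\G$ at distance much smaller than $\epsilon$ can share a nearest point on $\g_\epsilon$, so $\Psi$ need not even be injective, let alone bi-Lipschitz, at scales below $\varphi^{-1}(s)$. Your localization does not help here, because on the arc $\G(a_n,b_n)$ you still need the bi-Lipschitz estimate at \emph{all} scales, including those far below $d_n$. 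Since $\G$ is not chord-arc, no arc-length reparametrization rescues this either.

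The paper avoids this problem in two ways. First, it does not fix $\varphi$ in advance: it builds $\varphi$ from the chord-arc failure by discretizing each bad arc $\G_n$ at a mesh $d_n$ and \emph{defining} $\varphi(d_n)=\diam\G_n$, so that the portion of the surface over $\G_n$ between heights $0$ and $\varphi(d_n)$ is a genuine square piece of the right aspect ratio. Second, and more importantly, it never constructs a bi-Lipschitz map to a product. Instead it transplants V\"ais\"al\"a's Schwarz-inequality argument directly onto the surface: the square piece $D$ is sliced into an $N\times m$ grid of sub-square pieces, the assumed quasisymmetric map $F$ to the plane is applied, and the inequality $n\beta^2\lesssim \mathcal H^2(F(D))\lesssim\beta^2$ yields the contradiction. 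The discrete grid replaces the role your bi-Lipschitz $\Psi$ was meant to play, and the LQC property (via square-piece estimates) substitutes for the product structure. If you want to salvage your approach, you would need to replace nearest-point projection by something like the segments $[w,\dot w]$ from level curves back to $\G$ (which \emph{are} well-defined and non-crossing) and then argue quantitatively --- but at that point you are essentially reinventing the paper's square-piece machinery.
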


The main idea used in the proof is adapted from V\"ais\"al\"a \cite{Vais3}.

\begin{proof}
In view of  Proposition \ref{prop:LLCiffLQC}, we may assume that $\Omega$ has the LQC property with data $\e_0$ and $C_0$ as given in \eqref{eq:LQCforsquare}. Constants and comparison ratios in $\simeq$ and $\lesssim$ below 
depend at most on  $\e_0, C_0, \diam \Omega$ unless otherwise mentioned.

Since $\G$ is not a chord-arc curve, we may find a sequence $\{\G_n\}_{n\in \N}$ of subarcs of $\G$ such that $\ell(\G_n)/\diam{\G_n}\geq 2 n$ for all $n$ and $\diam{\G_n} \to 0$ as $n\to \infty$. For each $n\geq 1$, fix points 
$x_{n,0},x_{n,1}, \dots, x_{n,N_n}$ on $\G_n$,  labeled consecutively following their orientation in $\G_n$ and with $x_{n,0},  x_{n,N_n}$ representing the endpoints of $\G_n$, so that
\begin{equation}\label{eq:noCA}
\sum_{i=1}^{N_n} |x_{n,i-1}- x_{n,i}| \geq n\diam{\G_n}
\end{equation}
and that
\[ 1/2 \leq |x_{n,i-1} -x_{n,i}|/ |x_{n,j-1} -x_{n,j}| \leq 2,  \,\,\text{     for all } 1 \leq i, j \leq N_n.\]
After passing to a subsequence, we may assume that
\[d_n= \frac{1}{10 C_0^2}\min_{1\leq i \leq N_n }|x_{n,i-1}-x_{n,i}|\]
decrease to zero, as $n \to \infty$.

Define a homeomorphism $\varphi\colon [0,\infty) \to [0,\infty)$ in $\mathcal F$ with values
\[ \varphi(0)=0 \,\, \text{ and } \, \varphi (d_n) = \diam{\G_n} \,\,\, \text{for}\, n\ge 1,\]
and linear in $[d_1, \infty)$ and in each of the intervals $[d_n, d_{n-1}]$, $n\ge 1$.

We claim that $\S(\Omega,\varphi)$ is not quasisymmetric to $\mathbb{S}^2$. Assume the contrary: there exists an $\eta$-quasisymmetric homeomorphism from $\S(\Omega,\varphi)$ onto $\mathbb{S}^2$. After post-composing this map 
with a M\"obius transformation, we may find an $\eta$-quasisymmetric embedding $F$ from $\S(\Omega,\varphi)^+$ to $\mathbb{B}^2$.

Now fix $n$. Write $N=N_n$, $d=d_n$ and $x_i = x_{n,i}$ for simplicity.

Because $\G$ is a quasicircle, by Lemma \ref{lem:proximity} and its proof, we can find for each $i$ a point  $w_i \in \g_d$ that satisfies $d \leq |w_i-x_i| \leq 3C_0 d$. Choose next points $\dot{w}_i$ in $\G$ so that 
$|w_i-\dot{w}_i|=\dist(w_i, \G)=d$. Hence  $w_i, \dot{w}_i$ are in $\G \cap B^2(x_i,4 C_0 d)$. Because $\G$ is a quasicircle satisfying the $2$-point condition with the constant $C_0$ and the distance between any two 
consecutive points in  $\{x_0,x_1,\ldots, x_N\}$ is at least $10 C_0^2 d$, sets $\{B^2(x_i,4C_0 d)\cap\G\colon 0\leq i \leq N \}$ are mutually disjoint and points $\dot{w}_0,\dot{w}_1,\dots, \dot{w}_N$ follow the order of 
$x_0,x_1,\dots,x_N$ on $\G$ consecutively. Therefore, the segments $[w_i,\dot{w}_i]$ are pairwise non-crossing and the points $w_0,w_1,\dots,w_N$ are also in consecutive order on $\g_d$.

Observe again by the LQC property that for every $i =1,2, \dots,N$
\begin{equation}\label{eq:sizeofsquare-small}
\diam{\G(\dot{w}_{i-1},\dot{w}_i)} \simeq \diam{\g_{d}(w_{i-1},w_i,)}  \simeq |w_{i-1}-w_i| \simeq   |\dot{w}_{i-1}-\dot{w}_i| \simeq |x_{i-1}-x_i|\simeq d,
\end{equation}
and
\begin{equation}\label{eq:sizeofsquare-width}
\diam{\G(\dot{w}_0,\dot{w}_{N})} \simeq \diam{\g_{d}(w_0,w_N)} \simeq |w_0-w_N| \simeq |\dot{w}_0-\dot{w}_{N}|  \simeq  |x_0-x_N|  \simeq \varphi(d).
\end{equation}
Then
\begin{equation}\label{eq:sizeofsquare-height}
|{w}_i^+ - \dot{w}_i| \simeq \varphi(d) \,\,\,\text{    for  } i=0, 1,\ldots, N,
\end{equation}
recalling that ${w}_i^+$ is the lift of $w_i$ on $\S(\Omega, \varphi)^+$.

Note that, by \eqref{eq:sizeofsquare-small} and \eqref{eq:sizeofsquare-width}, $\langle w_0^+, w_N^+, \dot{w}_0, \dot{w}_{N} \rangle$ is admissible for a square piece $D=D(w_0^+, {w}_N^+, \dot{w}_0, \dot{w}_N)$ in 
$\S(\Omega, \varphi)^+$ satisfying  $\diam{D} \simeq \varphi(d)$. Here and in the remaining part of the proof, square pieces are understood to satisfy (i), (ii), (iii) and (iv), with the possibility that the constants in (iv) 
are altered but nevertheless  depend  at most on $\e_0, C_0, \diam \Omega$.

Note also that $\langle w_{i-1}, w_i, \dot{w}_{i-1}, \dot{w}_i \rangle$ are  admissible in $\Omega$. Let $Q_i $ be the quadrilateral bounded by 
$\G(\dot{w}_{i-1},\dot{w}_i) \cup [w_{i-1},\dot{w}_{i-1}] \cup [w_i,\dot{w}_i] \cup \g_{d}(w_{i-1},w_i)$ and $D_i = D(w_{i-1}^+, w_i^+, \dot{w}_{i-1}^+, \dot{w}_i^+ )$ its lift on $\G(\Omega,\varphi)^+$ (not necessarily square 
pieces). Then $\{D_i\colon 1\leq i \leq N\}$ have pairwise disjoint interiors.

The square piece $D$ is partitioned into $N$ essentially disjoint tall and narrow strips $D_i$, each of  which has height in the magnitude $\varphi(d)$ and width in the magnitude $d$. The ratio  $N d / \varphi(d)$ of the total 
width  of $D$ to the height is large in view of  \eqref{eq:noCA}. In other words $D$ resembles the product $\sigma \times I$  of a long arc $\sigma$ with a unit interval $I$. To complete the proof we follow  {\Vaisala}'s method 
in \cite[Theorem 4.2]{Vais3}.

Slice $D_i$ by parallel planes $H_j= \R^2\times \{j d\}$  with $j=0,1,\dots, m,$ and $H_{m+1}= \R^2\times \{\varphi(d)\}$,  where $m$ is the largest integer less than $\varphi(d)/d-1$. The resulting sets 
$D_{i,j}, 1\leq i \leq N $ and $1 \leq j \leq m+1,$ are square-like pieces associated to the admissible quadruples $\langle v_{i-1,j},v_{i,j},v_{i-1,j-1},v_{i,j-1}, \rangle$ and have diameter comparable to $d$. Here, $v_{p,q}$ 
denotes the intersection of the arc $[w_p,\dot{w}_p]^+$ and the plane $H_q$.

Let $z_{i,j}$ be a rough center of $D_{i,j}$ defined as in Section \ref{sec:square-center}. Then by \eqref{eq:center}
\[ |z_{i,j}-x|\simeq \diam D_{i,j} \simeq d \,\, \text{   for any } x\in \partial D_{i,j}.\]
Let $u \in \partial D_{ij}$ be a point at which
\[ |F(z_{i,j}) - F(u)| = \dist(F(z_{i,j}), \partial F(D_{i, j})) = r. \]
Since $|v_{i,j-1}-v_{i,j}| \simeq |v_{i,j}-z_{i,j}| \simeq |z_{i,j}-u|$, it follows from the $\eta$-quasisymmetry of $F$ that $\beta_{i,j}= F(v_{i,j-1})-F(v_{i,j})\lesssim r$. Hence 
$\beta_{i,j}^2 \lesssim \mathcal{H}^2(F(D_{i,j}))$. Here and in the rest of proof, constants may also depend on $\eta$.

Set
\[\beta = \min \{ |F(x)-F(y)|\colon x \in {\g_{d}(w_0,w_{N})}^+,\, y \in {\G(\dot{w}_0,\dot{w}_{N})} \}. \]
By Schwarz inequality,
\[\beta^2 \leq \left ( \sum_{j=1}^{m+1}\beta_{i,j}  \right )^2 \lesssim (m + 1)\mathcal{H}^2(F(D_{i})) \]
for $i=1,2,\ldots,N$. Note  from \eqref{eq:noCA} that $ n \varphi(d)\lesssim Nd$ and, by the choice of $m$, that $\varphi(d)/d -2 \leq m <\varphi(d)/d -1$. Summing over $i$ we get
\begin{equation}\label{eq:beta}
n \beta^2  \lesssim \mathcal{H}^2(F(D)).
\end{equation}

Let $p \in {\g_{d}(w_0,w_{N})}^+$  and $q \in {\G(\dot{w}_0,\dot{w}_{N})}$ be the points at which $\beta=|F(p)-F(q)|$ is realized. Then $|x-p| \lesssim \diam D \simeq |p-q|$  for any $x\in D$, which implies by quasisymmetry 
that $|F(x) - F(p)| \lesssim \beta$. Since $F(D)$ is planar,  $\mathcal{H}^2(F(D)) \lesssim \beta^2$. Hence by \eqref{eq:beta},  $n \beta^2  \lesssim  \beta^2$.

This is impossible since $\beta$ and the comparison ratio are independent of $n$. The proof is completed.
\end{proof}

\bibliographystyle{abbrv}
\bibliography{thesisref}

\end{document}